\newcommand{\CC}{{\mathbb{C}}}
\newcommand{\FF}{{\mathbb{F}}}
\newcommand{\GG}{{\mathbb{G}}}
\newcommand{\KK}{{\mathbb{K}}}
\newcommand{\NN}{{\mathbb{N}}}
\newcommand{\PP}{{\mathbb{P}}}
\newcommand{\QQ}{{\mathbb{Q}}}
\newcommand{\RR}{{\mathbb{R}}}
\newcommand{\ZZ}{{\mathbb{Z}}}
  \newcommand{\A}{{\mathcal{A}}}
  \newcommand{\B}{{\mathcal{B}}}
  \newcommand{\E}{{\mathcal{E}}}
  \newcommand{\F}{{\mathcal{F}}}
  \newcommand{\G}{{\mathcal{G}}}
\renewcommand{\L}{{\mathcal{L}}}
\renewcommand{\O}{{\mathcal{O}}}
  \newcommand{\W}{{\mathcal{W}}}
\renewcommand{\div}{\operatorname{div}}
\newcommand{\sym}{\operatorname{Sym}}
\newcommand{\Hom}{\operatorname{Hom}}
\newcommand{\rank}{\operatorname{rank}}
\newcommand{\Pic}{\operatorname{Pic}}
\newcommand{\spec}{\operatorname{Spec}}
\newcommand{\Sym}{\operatorname{Sym}}
\newcommand{\Qbar}{\overline{\mathbb{Q}}}
\newcommand{\ra}{\rightarrow}
\theoremstyle{plain}
\newtheorem{theorem}{Theorem}[section]
\newtheorem{proposition}[theorem]{Proposition}
\newtheorem{lemma}[theorem]{Lemma}
\newtheorem{corollary}[theorem]{Corollary}
\theoremstyle{definition}
\newtheorem{defn}[theorem]{Definition}
\newtheorem{example}[theorem]{Example}
\newtheorem{remark}[theorem]{Remark}
\newtheorem*{KSConjecture}{Kawaguchi--Silverman Conjecture}
\newif\ifhascomments \hascommentstrue
  \newcommand{\sasha}[1]{{\color{red}[[\ensuremath{\spadesuit\spadesuit\spadesuit} #1]]}}
  \newcommand{\brett}[1]{{\color{blue}[[\ensuremath{\heartsuit\heartsuit\heartsuit} #1]]}}
  \newcommand{\sasha}[1]{}
  \newcommand{\brett}[1]{}
\title{Dynamics of Endomorphisms for \\ Projective Bundles on Elliptic Curves}
\author{Brett Nasserden, Sasha Zotine}
\date{}
\newcommand{\Addresses}{{% additional braces for segregating \footnotesize
  \bigskip
  \footnotesize

  \noindent Brett Nasserden, \textsc{Department of Mathematics \& Statistics,
McMaster University, Hamilton, Ontario, Canada, L8S 4K1}\par\nopagebreak
  \noindent \texttt{E-mail:} \url{nasserdb@mcmaster.ca}

  \medskip

  \noindent Sasha Zotine, \textsc{Department of Mathematics \& Statistics,
McMaster University, Hamilton, Ontario, Canada, L8S 4K1}\par\nopagebreak
  \noindent \texttt{E-mail:} \url{zotinea@mcmaster.ca}
}}
\begin{document}

\maketitle

\begin{abstract}
We study the dynamics of surjective endomorphisms of projective bundles on elliptic curves and relate their dynamical properties to the geometry of the bundle. As an application we prove the Kawaguchi--Silverman conjecture for projective bundles on elliptic curves, thereby completing the conjecture for all projective bundles on curves. Our approach is to use the transition functions of the bundles. This allows us to further prove the conjecture for projective split bundles on a smooth projective variety with finitely generated Mori cone.
\end{abstract}

\section{Introduction}
\label{sec:intro}

Let $X$ be a smooth projective variety defined over a field of characteristic $0$. We study non-isomorphic surjective endomorphisms $f \colon X \ra X$ and their dynamics. The existence of such morphisms often imply that $X$ has some special geometry. For example, the only curves admitting surjective endomorphisms are of genus zero or one; and a classification for surfaces admitting these was carried out by Fujimoto and Nakayama; see \cite{fujimotonoboru2005compact}. In particular, $\PP^1$ bundles on curves of small genus play a prominent role. The purpose of this paper is to expand our knowledge of non-isomorphic endomorphisms of projective bundles on an elliptic curve. 

Our central results show that the dynamics of a morphism of projective bundles on an elliptic curve are determined by the dynamics of a morphism of the base curve. Lesieutre and Satriano show that up to iteration any surjective morphism of a projective bundle sends fibres to fibres \cite[Lemma~6.2]{lesieutresatriano2021ksc} and hence the degree of these maps between fibres provides a well defined invariant of the morphism. We call this the \textit{degree on the fibres}, which in the case of projective bundles, is also equivalent to an invariant called the \textit{relative dynamical degree}; see Definition~\ref{def:relDyndeg}. In the non-split case, we have the following.

\begin{theorem}\label{intro:main1}
Let $X$ be an elliptic curve and $\E$ a degree 0 vector bundle on $X$ that is not a direct sum of line bundles. The projective bundle $\PP(\E)$ does not possess any surjective endomorphism of degree larger than one on the fibres. 
\end{theorem}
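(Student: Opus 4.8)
The plan is to proceed by contradiction: suppose $f \colon \PP(\E) \ra \PP(\E)$ is a surjective endomorphism of degree $d > 1$ on the fibres. By the result of Lesieutre–Satriano cited in the introduction, after replacing $f$ by an iterate we may assume $f$ sends fibres to fibres, hence descends to a surjective endomorphism $g \colon X \ra X$ of the elliptic curve, fitting into a commutative square with the bundle projection. Since $\E$ has degree $0$, I would first recall (Atiyah's classification of vector bundles on elliptic curves) that $\E$ decomposes as a direct sum of indecomposable bundles $\E_i$, each of degree $0$ and each of the form $F_{r_i} \otimes L_i$ where $L_i \in \Pic^0(X)$ and $F_{r_i}$ is the unique indecomposable bundle of rank $r_i$ with a nonzero section (the Atiyah bundle). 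The hypothesis that $\E$ is not a direct sum of line bundles means at least one $r_i \geq 2$.

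**The transition-function computation.**

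The core of the argument — and, following the abstract's emphasis, the technical heart — is to translate the existence of $f$ into a constraint on transition functions. Cover $X$ by opens trivializing $\E$; then $\E$ is given by a cocycle valued in $\GL_r$, and $f$ covering $g$ corresponds to a collection of matrix-valued maps intertwining this cocycle with its $g$-pullback, up to the $\PGL_r$ ambiguity inherent in working with $\PP(\E)$ rather than $\E$ itself. The key point is that a degree-$d$ map on the fibre $\PP^{r-1}$ that is \emph{linear} (i.e. induced by a bundle map) forces $d = 1$; so for $d > 1$ the fibrewise maps must be genuinely nonlinear self-maps of $\PP^{r-1}$. I would show that the unipotent/nilpotent structure of the Atiyah bundle $F_r$ is incompatible with supporting such a nonlinear family: the automorphism group and the endomorphism structure of $\PP(F_r \otimes L)$ is too rigid. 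Concretely, $F_r$ sits in extensions built from $F_{r-1}$, and this filtration is preserved by any endomorphism of $\PP(\E)$ (it is the unique filtration with the relevant numerical properties), which pins the fibrewise maps down to degree one.

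**Reducing to the summands and handling the pullback.**

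Before the rigidity step I would need to argue that $f$ respects the decomposition $\E = \bigoplus \E_i$ in a suitable sense — i.e. that the sub-projective-bundles $\PP(\E_i) \subset \PP(\E)$, or more precisely the relevant linear subbundles, are permuted by $f$ up to iteration. This should follow from the fact that the Harder–Narasimhan-type data and the isomorphism classes $\E_i$ are discrete invariants: $g^* \E \cong \E$ up to twist (since $g$ is étale on an elliptic curve, $g^*$ permutes $\Pic^0$ and preserves degree and the Atiyah bundles up to the permutation), so comparing $f^* \O_{\PP(\E)}(1)$-type line bundles forces the pieces to match up. Passing to a further iterate, each $\PP(\E_i)$ with $r_i \geq 2$ is preserved, and the rigidity argument above applies to it.

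**Main obstacle.**

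I expect the main difficulty to be the rigidity claim: showing precisely that $\PP(F_r \otimes L)$ admits no surjective self-map of fibre-degree $> 1$ over any isogeny $g$ of $X$. The subtlety is that one is not classifying \emph{automorphisms} but arbitrary surjective endomorphisms, and a priori a nonlinear fibrewise map could interact with the twisting by $L$ and the isogeny $g$ in a way that is not visible from the automorphism group alone; controlling this likely requires a careful analysis of how $g^*$ acts on the cocycle of $F_r$ together with the observation that the canonical section (or the flag it generates) must be preserved, which ultimately bounds the degree. The direct-sum and pullback bookkeeping in the paragraph above is routine by comparison, relying only on Atiyah's classification and the discreteness of the invariants involved.
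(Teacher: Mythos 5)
Your proposal correctly identifies the high-level scaffolding (iterate $f$ to obtain the commutative square over $g\colon X\to X$, invoke Atiyah's classification to write $\E = \bigoplus \F_{r_i}\otimes \L_i$ with some $r_i\geq 2$, and translate $f$ into data on transition functions), and this does match the paper's Section~\ref{sec:nonsplit} and Section~\ref{sec:bundleprelim}. But the step you flag as the ``main obstacle'' is exactly where the argument breaks, and your proposed fix does not work.

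You claim that the filtration $\F_1\subset \F_2\subset\cdots\subset\F_r$ ``is preserved by any endomorphism of $\PP(\E)$ \ldots which pins the fibrewise maps down to degree one.'' Preserving a flag in $\PP^{r-1}$ does not force a fibrewise map to be linear: the degree-$d$ power map $[x_0:\cdots:x_{r-1}]\mapsto[x_0^d:\cdots:x_{r-1}^d]$ preserves every coordinate flag and has degree $d>1$. So flag preservation alone is far too weak to conclude, and no ``rigidity of the automorphism group'' can help, because the hypothesis allows arbitrary surjective endomorphisms of the fibre, not just linear ones. The ingredient the paper actually needs is a function-theoretic fact about the non-split cocycle of $\F_r$, not a structural fact about flags. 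Concretely, the trivialization over a two-set cover $\{U,V\}$ makes the transition matrix unipotent with off-diagonal entries equal to a function $\omega$ with divisor $3T_0 - T_1 - T_2 - O$, and Lemma~\ref{lemma:trivialization} records that $\O_C(U)\cap\O_C(V)=\Qbar$ and, crucially, that $a + c\,\omega\in\O_C(V)$ with $a\in\O_C(U)$ and $c\in\Qbar$ forces $c=0$. Applying $\Sym^d M$ to a polynomial $F$ over $U$ and demanding the output be regular over $V$ then triggers a cascade (Lemma~\ref{lemma:whichcoeffs}, Proposition~\ref{prop:vanishingcoeffs}): the $\omega$-terms cannot be cancelled by anything regular, so the leading coefficients of $F$ are forced first into $\Qbar$ and then to $0$, and a double induction shows all coefficients near $[0:\cdots:0:1]$ vanish. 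That common zero (Proposition~\ref{prop:commonzero}) contradicts surjectivity. Your sketch contains none of this: it replaces a quantitative vanishing argument grounded in the Riemann--Roch geometry of the elliptic curve with a qualitative preservation statement that is simply not strong enough. The reduction to summands at the end is, as you say, bookkeeping (the paper handles it by setting $t_j=0$ for variables outside the relevant block), but that part cannot be salvaged without first having the core vanishing mechanism.
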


This vastly expands the study of projective bundles on elliptic curves in \cite{matsuzawasanoshibata2018surfaces}, which had done the rank two case. Moreover, Theorem \ref{intro:main1} completes the work begun in \cite{lesieutresatriano2021ksc} where it was shown that the dynamics of endomorphisms of projective bundles on elliptic curves can be understood provided that one can understand the dynamics of endomorphisms of degree 0 projective bundles on curves.

When $\E$ is a direct sum of line bundles Theorem~\ref{intro:main1} no longer applies. Indeed, the projectivization of a trivial bundle on an elliptic curve admits surjective morphisms of arbitrary degree on the fibres. We instead see that the endomorphisms are bounded in complexity by the endomorphisms on the base curve. The \textit{dynamical degree} of a morphism makes this more precise: Suppose $f \colon X \rightarrow X$ is a surjective endomorphism. Fixing an ample divisor $H$ on $X$, the \textit{dynamical degree} of $f$ is defined as the limit
\begin{equation*}
	\lambda_1(f) \coloneqq \lim_{n\rightarrow \infty}\bigl((f^n)^*H\cdot H^{\dim X-1}\bigr)^{\frac{1}{n}},
\end{equation*}
which exists and is independent of $H$; see \cite[Remark 9]{kawaguchisilverman2016degrees}. In the 1990s, it was recognized that dynamical degrees are an essential birational invariant of $f$ closely related to its entropy, and they have been intensely studied. References to foundational works in this area include \cite{guedj2005ergodic, dethelinvigny2010entropy, dillerdujardinguedj2010dynamics, favrejonsson2011compact,gromov2003entropy}.

\begin{theorem}\label{intro:main2}
Let $X$ be a smooth projective variety over $\Qbar$ such that its Mori cone is generated by finitely many numerical classes of curves. Fix $\L_0, \L_1,\ldots, \L_r$ to be numerically trivial line bundles on $X$ with $\L_1$ non-torsion and set $\E = \bigoplus_{i=0}^r \L_i$. Suppose that there is a diagram
\begin{equation*}\xymatrix{\PP(\E)\ar[r]^{f}\ar[d]_\pi &\PP(\E)\ar[d]^\pi\\ X\ar[r]_g & X}
\end{equation*}
with $f$ and $g$ surjective. Then the degree of $f$ on the fibres of $\pi$ is at most $\lambda_1(g)$ and $\lambda_1(f)=\lambda_1(g)$.
\end{theorem}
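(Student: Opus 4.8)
The plan is to analyze the endomorphism $f$ via the transition functions of the split bundle $\PP(\E)$, following the paper's stated strategy. First I would invoke \cite[Lemma~6.2]{lesieutresatriano2021ksc} so that, after replacing $f$ and $g$ by a common iterate, $f$ descends to the surjective endomorphism $g$ of the base and restricts to a degree-$d$ endomorphism on each fibre $\PP^r$; this $d$ is the quantity whose bound against $\lambda_1(g)$ we must establish. Since $\E = \bigoplus_{i=0}^r \L_i$ with the $\L_i$ numerically trivial, the bundle $\PP(\E)$ is a ``toric over $X$'' object: it carries a fibrewise torus action and its Picard group is generated (up to numerical equivalence) by the relative $\O(1)$ class $\xi$ and pullbacks $\pi^* N$ of divisor classes $N$ on $X$. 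The key computation is to understand the action $f^*$ on $\mathrm{N}^1(\PP(\E))_{\RR}$: one has $f^*\xi \equiv d\,\xi + \pi^*(M)$ for some class $M$ on $X$ determined by how $f$ twists the tautological bundle, and $f^*\pi^* N \equiv \pi^* g^* N$.

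Next I would extract the degree bound. The Mori cone $\overline{\mathrm{NE}}(X)$ being finitely generated (rational polyhedral) is what makes $g^*$ well-behaved: by Perron--Frobenius applied to the finite generating set of extremal rays, $\lambda_1(g)$ is realized as the spectral radius of $g^*$ acting on $\mathrm{N}^1(X)_{\RR}$, and moreover $g^*$ preserves the nef cone with $\lambda_1(g)$ an eigenvalue having a nef eigenvector. To bound $d$, I would pick a fibre class $F$ (the class of a line in a fibre $\PP^r$) and use that $f^* $ (or rather pushforward/adjointness) relates $d\cdot F$ to $F$, forcing $d$ to divide into the eigenvalue structure; more concretely, pulling back an ample divisor $A = \xi + \pi^*(\text{ample on }X)$ and intersecting with curve classes, the coefficient growth of $(f^n)^*A$ in the $\xi$-direction is exactly $d^n$, while in the $\pi^*$-directions it is governed by $g^*$, and ampleness of $(f^n)^* A$ combined with the polyhedral structure of the cone forces $d \le \lambda_1(g)$ (otherwise the $\xi$-coefficient would outrun the base contribution in a way incompatible with the cone being preserved and finitely generated --- this is where the split hypothesis and the numerically-trivial $\L_i$ are essential, since they make the relative nef cone literally one-dimensional over the base).

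For the equality $\lambda_1(f) = \lambda_1(g)$, the standard tool is the product formula for dynamical degrees for the fibration $\pi \colon \PP(\E) \to X$: $\lambda_1(f) = \max\{\lambda_1(g),\ \lambda_1(g)\cdot d_{\mathrm{rel}},\ldots\}$ where the relative dynamical degree $d_{\mathrm{rel}}(f)$ equals the degree on the fibres $d$ by Definition~\ref{def:relDyndeg}. Since $\dim$ of the fibre is $r$ and the fibrewise map has all its dynamical degrees equal to powers of $d$, the product formula gives $\lambda_1(f) = \max\{\lambda_1(g),\, d\}$. Having already shown $d \le \lambda_1(g)$, we conclude $\lambda_1(f) = \lambda_1(g)$. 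I would make the block-triangular structure of $f^*$ on $\mathrm{N}^1$ explicit to give a self-contained proof of the product formula in this case rather than citing it in full generality.

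The main obstacle I anticipate is the degree bound $d \le \lambda_1(g)$: unlike the non-split case of Theorem~\ref{intro:main1} where $d = 1$ outright, here we genuinely need to control $d$ using the interplay between the relative $\O(1)$ and the base, and the transition-function description of $f$ must be translated carefully into the statement that $f^*\xi$ has $\xi$-coefficient exactly $d$ with base part $M$ bounded appropriately. Making precise why finite generation of the Mori cone is exactly the hypothesis that lets Perron--Frobenius run --- and why without it the argument fails --- is the delicate conceptual point; the rest (the product formula bookkeeping and the eigenvector manipulation) should be routine once the matrix of $f^*$ is pinned down.
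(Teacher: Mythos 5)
There is a genuine gap in the key step, which you yourself flag as the ``main obstacle'': the bound $d \le \lambda_1(g)$ cannot be extracted by Perron--Frobenius / cone-positivity considerations on $\mathrm{N}^1$ as you propose. The issue is that your argument is entirely invariant under numerical equivalence (ampleness, the polyhedral cone, the block-triangular matrix of $f^*$ on $\mathrm{N}^1$), but the crucial hypothesis --- that $\L_1$ is non-torsion --- is invisible at the level of $\mathrm{N}^1$: all the $\L_i$ are numerically trivial, hence identically zero in $\mathrm{N}^1(X)$, so your proposed argument would run verbatim for the trivial bundle $\E = \O_X^{\oplus (r+1)}$, where $\PP(\E) \cong \PP^r \times X$ admits endomorphisms of arbitrary fibre degree $d$ (take $f = \phi \times g$). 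In that case $(f^n)^*\bigl(\xi + \pi^* H\bigr) = d^n \xi + \pi^*(g^n)^* H$ is ample for every $n$ and every $d$, so nothing about the cone being preserved or finitely generated limits the growth of the $\xi$-coefficient. In short, no purely cone-theoretic argument can distinguish the torsion and non-torsion cases, yet the conclusion is true in one and false in the other.

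The paper's actual mechanism is entirely different: it works in $\Pic^0(X)_\QQ$, not $\mathrm{N}^1(X)_\RR$. Using the transition-function description of $f$ (Proposition~\ref{prop:compatibilityconditions}), one compares coefficients of monomials in the defining polynomials on overlaps of a trivializing cover and obtains, whenever a coefficient of $F_i$ is nonzero, the explicit relation in $\Pic(X)$
\[
g^*\L_i \otimes \pi^*\B^{-1} \otimes \Bigl( \bigotimes_{\ell=0}^r \L_\ell^{\otimes -d_\ell} \Bigr) \cong \O_X,
\]
(equation~\eqref{eq:keyRel}). Surjectivity on each fibre forces enough of these coefficients to be nonzero that, after a pigeonhole argument, one derives a polynomial relation on $g^*$ restricted to the subgroup $G \subset \Pic^0(X)$ generated by $\B$ and the $\L_i$, and the non-torsion hypothesis on $\L_1$ is precisely what prevents the relevant morphism from being an isogeny --- this is where the hypothesis earns its keep, and it yields $d = \rho\bigl(g^*_\QQ, V\bigr)$ for $V = G \otimes \QQ$ (Proposition~\ref{prop:spectralradius}). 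One then bounds that spectral radius via the Albanese variety: $g^*$ on $\Pic^0(X)$ is controlled by the pullback on $H^1$, whose spectral radius is $\sqrt{\lambda_1(g)}$ because $\lambda_1(g)$ is the spectral radius on $H^{1,1} = H^1 \otimes \overline{H^1}$ (Proposition~\ref{prop:TIRSize}). This gives the stronger bound $d \le \sqrt{\lambda_1(g)}$, after which the product formula $\lambda_1(f) = \max\{\lambda_1(g), \lambda_1(f|_\pi)\}$ closes the argument exactly as you outline. Your sketch of the final product-formula step is fine; the missing idea is that the degree constraint lives in $\Pic^0$, and the tool that connects it to $\lambda_1(g)$ is the Albanese-variety spectral-radius bound, neither of which appear in your proposal.
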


The commutative square in Theorem~\ref{intro:main2} is a mild assumption due to \cite[Lemma~6.2]{lesieutresatriano2021ksc}. As noted above, arbitrary degree on the fibers is possible for torsion bundles (e.g. a trivial bundle). The rank two case for curves is studied in \cite{matsuzawasanoshibata2018surfaces}. However, their methods were not able to treat three or more line bundles. The novelty of our approach compared to theirs is the use of transition functions which allow one to work with arbitrarily many line bundles.

Amerik pioneered the study of surjective endomorphisms of projective bundles in \cite{amerik2003endos}. With Kuznetsova, she showed that if $X=\PP^n$, then $\PP(\E)$ admits a surjective endomorphism of degree larger than one on the fibres if and only $\E$ splits as a direct sum of line bundles \cite[Theorem~3]{amerikkuznetsova2017projbundles}. Similarly, Theorem~\ref{intro:main1} provides evidence for this statement as a more global phenomenon for projective varieties. 

On the other hand, projectivizations of direct sums of are particularly interesting because they are a major source of examples. For example, trivial bundles of arbitrary rank give products with projective space, and more generally direct sums of torsion bundles on elliptic curves provide examples, as do direct sums of line bundles on toric varieties. 

Conceptually, the study of projective bundles is important because one of the terminating steps of the minimal model program is at a \textit{Mori-fibre space}, the simplest examples being projective bundles. This manifests in the dynamical setting as well, for instance Meng and Zhang use the minimal model program to study so-called \textit{int-amplified morphisms} in a series of papers \cite{meng2020intamp, mengzhang2020intamp, mengzhang2022intamp}. They observe that the vast majority
of Mori-fibre spaces which are known to possess non-isomorphic surjective morphisms are projective bundles.

\begin{subsection}{Applications to the Kawaguchi-Silverman conjecture}\label{intro:KSC}
	Our results have applications in arithmetic dynamics. In the last decade an alternative measure of dynamical complexity arising from arithmetic was established by Kawaguchi and Silverman in \cite{kawaguchisilverman2016degrees}. Associated to $H$ is a Weil height function $h_H$ that measures the arithmetic complexity of the point $P\in X(\Qbar)$ relative to $H$. Kawaguchi and Silverman defined the \textit{arithmetic degree} of $P$ with respect to $f$ as
\begin{equation*}
	\alpha_f(P):=\lim_{n\rightarrow \infty}h^+_{H} \bigl( f^n(P) \bigr)^{\frac{1}{n}},
\end{equation*}
when the limit on the right exists and where $h^+_{H} \bigl( f^n(P) \bigr)=\max\{1, h_{H} \bigl( f^n(P) \bigr)\}$. The arithmetic degree measures the growth of heights along a forward orbit of $f$. Kawaguchi and Silverman conjecture a strong relationship between these two distinct notions of complexity. 

\begin{KSConjecture}[{\cite[Conjecture 6]{kawaguchisilverman2016degrees}}]\label{conj:KSC}
	\def\@currentlabel{KS}
	Let $X$ be a normal projective variety defined over $\Qbar$ and let $f\colon X\dashrightarrow X$ be a dominant rational map. Let $P\in X(\Qbar)$ such that $f^n(P)$ is well defined for all $n\geq 1$. If the forward orbit $O_f(P)$ is Zariski dense in $X$ then the limit $\lim_{n\rightarrow\infty}h_H^+(f^n(P))^\frac{1}{n}$ exists and is equal to $\lambda_1(f)$. In other words $\alpha_f(P)=\lambda_1(f)$.
\end{KSConjecture}

Having control over the dynamics of endomorphisms allow us to prove the conjecture for any projective bundle on an elliptic curve. The proof is in three cases: If the bundle is non-split, then it follows immediately from Theorem~\ref{intro:main1} because it is vaccuously true.

\begin{corollary}
	\label{intro:cor1}
	Let $C$ be an elliptic curve defined over $\Qbar$. For any semistable non-split vector bundle $\E$ on $C$ having degree zero, the Kawaguchi--Silverman conjecture is true for any surjective endomorphism of $\PP(\E)$.
\end{corollary}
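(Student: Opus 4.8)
The plan is to reduce the statement to the Kawaguchi--Silverman conjecture on the base elliptic curve $C$. Since the conjecture for a surjective endomorphism follows from the conjecture for any of its positive iterates (a standard reduction, see \cite{matsuzawasanoshibata2018surfaces}), and since by \cite[Lemma~6.2]{lesieutresatriano2021ksc} some iterate of $f$ carries the fibres of $\pi\colon\PP(\E)\ra C$ to fibres, I may assume from the outset that there is a surjective endomorphism $g\colon C\ra C$ with $\pi\circ f=g\circ\pi$. Because $\E$ is semistable of degree $0$ and is not a direct sum of line bundles, Theorem~\ref{intro:main1} shows that $f$ has degree $1$ on the fibres of $\pi$; equivalently, the relative dynamical degree of $f$ over $\pi$ is $1$ (Definition~\ref{def:relDyndeg}). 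The product formula for dynamical degrees then forces $\lambda_1(f)=\max\{\lambda_1(g),1\}=\lambda_1(g)$, since always $\lambda_1(g)\ge 1$. Finally, $g$ is a surjective endomorphism of an elliptic curve, hence an isogeny followed by a translation, so $\lambda_1(g)=\deg g$ and $g^*A\NUM(\deg g)\,A$ for any divisor $A$ on $C$; in particular $g$ is a polarized endomorphism.

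Next I would push the arithmetic degree down to $C$. Let $P\in\PP(\E)(\Qbar)$ have Zariski dense $f$-orbit. As $\pi$ is proper and surjective, $O_g(\pi(P))=\pi(O_f(P))$ is Zariski dense in $C$, so $\pi(P)$ is not $g$-preperiodic. If $\deg g=1$ then $\lambda_1(f)=1$ and the conclusion is immediate from $1\le\alpha_f(P)\le\lambda_1(f)$, the upper bound being valid for every surjective endomorphism \cite{kawaguchisilverman2016degrees}. Suppose $\deg g>1$. The Kawaguchi--Silverman conjecture is known for polarized endomorphisms \cite{kawaguchisilverman2016degrees}, so $\alpha_g(\pi(P))=\lambda_1(g)=\deg g$; concretely, for any ample divisor $A$ on $C$ one has $h_A(g^n(\pi(P)))\to\infty$ with $h_A(g^n(\pi(P)))^{1/n}\to\deg g$. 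Fix an ample divisor $H$ on $\PP(\E)$; since $\pi^*A$ is nef there is $c>0$ with $cH-\pi^*A$ ample, whence $h_{\pi^*A}\le c\,h_H+O(1)$, while functoriality of heights gives $h_{\pi^*A}(f^n(P))=h_A(g^n(\pi(P)))+O(1)$. Combining these, $\liminf_{n}h_H^+(f^n(P))^{1/n}\ge\deg g$, and since the upper bound gives $\limsup_{n}h_H^+(f^n(P))^{1/n}\le\lambda_1(f)=\deg g$, the limit exists and $\alpha_f(P)=\lambda_1(f)$. This proves the conjecture.

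I expect the only step with genuine content to be the first one: identifying the induced endomorphism $g$ of $C$ and extracting $\lambda_1(f)=\lambda_1(g)$ from Theorem~\ref{intro:main1} by way of the relative dynamical degree. Once $f$ is known to be of degree one on the fibres, everything else is bookkeeping with Weil heights together with the (classical) arithmetic of elliptic curves; there is no arithmetic specific to $\PP(\E)$, since the height growth along any Zariski dense $f$-orbit is dictated entirely by that of its image under $\pi$. It is even conceivable that no surjective endomorphism of such a $\PP(\E)$ admits a Zariski dense orbit, in which case the statement is vacuous; the argument above applies in either case.
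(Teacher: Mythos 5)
Your proof is correct and matches the paper's approach: Theorem~\ref{intro:main1} forces degree one on the fibres, hence $\lambda_1(f)=\lambda_1(g)$, and the conjecture for $f$ then follows from the known case of surjective endomorphisms of the elliptic curve (your closing height comparison is exactly a re-derivation of Corollary~\ref{cor:standard1}). The paper justifies this corollary in the introduction only with the phrase that it is ``vacuously true,'' which is misleading --- what is actually used is the forced equality $\lambda_1(f)=\lambda_1(g)$ together with Corollary~\ref{cor:standard1} --- and your fuller account supplies the reasoning that the paper elides.
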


Otherwise, if the bundle is split, we are able to prove the conjecture using Theorem~\ref{intro:main2} for the non-torsion case, and in the torsion case we dominate the dynamics of our endomorphism using the Albanese variety; see Section~\ref{sec:splitcase} for details.

\begin{corollary}
	\label{intro:cor2}
	Let $X$ be a smooth projective variety over $\Qbar$ such that its Mori cone is generated by finitely many numerical classes of curves and such that the Kawaguchi--Silverman conjecture is true for all surjective endomorphisms of $X$. Fix $\L_0, \L_1,\ldots, \L_r$ to be numerically trivial line bundles on $X$ and set $\E = \bigoplus_{i=0}^r \L_i$. For any surjective endomorphism of $\PP (\E)$, the Kawaguchi--Silverman conjecture holds.
\end{corollary}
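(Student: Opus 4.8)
The plan is to reduce to the commutative square of Theorem~\ref{intro:main2} and then split into two cases according to whether some summand $\L_i$ is non-torsion. Fix a surjective endomorphism $f\colon\PP(\E)\to\PP(\E)$. Since $\lambda_1(f^n)=\lambda_1(f)^n$, $\alpha_{f^n}(P)=\alpha_f(P)^n$, and Zariski density of $O_f(P)$ is unaffected by passing to a power, it is harmless to replace $f$ by an iterate; in particular, by \cite[Lemma~6.2]{lesieutresatriano2021ksc} we may assume there is a surjective $g\colon X\to X$ with $\pi\circ f=g\circ\pi$, where $\pi\colon\PP(\E)\to X$ is the structure map. Throughout I will use three standard facts: for a surjective endomorphism of a smooth projective variety over $\Qbar$ the limit defining $\alpha_f(P)$ exists and satisfies $\alpha_f(P)\le\lambda_1(f)$ \cite{kawaguchisilverman2016degrees}; for an equivariant dominant morphism $\varphi$ (that is, $\varphi\circ f=g\circ\varphi$) one has $\alpha_f(P)\ge\alpha_g(\varphi(P))$, with equality when $\varphi$ is finite, and $\overline{O_f(P)}=\PP(\E)$ forces $\overline{O_g(\varphi(P))}$ to be all of the target; and the product formula $\lambda_1(f)=\max\{\lambda_1(g),\lambda_1(f/g)\}$ for the projective bundle $\pi$, where the relative dynamical degree $\lambda_1(f/g)$ is the invariant of Definition~\ref{def:relDyndeg}.

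\emph{Non-torsion case.} Suppose some $\L_i$ is non-torsion; after relabelling the summands of $\E$, assume it is $\L_1$. Then Theorem~\ref{intro:main2} applies and gives $\lambda_1(f)=\lambda_1(g)$. Let $P\in\PP(\E)(\Qbar)$ with $O_f(P)$ Zariski dense. By the facts above, $O_g(\pi(P))$ is Zariski dense in $X$, so the hypothesis that the Kawaguchi--Silverman conjecture holds for all surjective endomorphisms of $X$ yields $\alpha_g(\pi(P))=\lambda_1(g)$. Hence $\lambda_1(f)=\lambda_1(g)=\alpha_g(\pi(P))\le\alpha_f(P)\le\lambda_1(f)$, and $\alpha_f(P)=\lambda_1(f)$.

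\emph{Torsion case.} Now suppose every $\L_i$ is torsion. Let $a\colon X\to A:=\Alb(X)$ be the Albanese map; each $\L_i$, being numerically trivial, is algebraically trivial, hence of the form $a^*\M_i$ with $\M_i\in\Pic^0(A)$, and since $a^*$ is an isomorphism on $\Pic^0$ the order of $\M_i$ divides some common $M$. Let $[M]\colon A\to A$ be multiplication by $M$ and set $\tilde X:=X\times_{a,A,[M]}A$, with $p\colon\tilde X\to X$ the (finite étale) first projection. Since $[M]^*\M_i=\M_i^{\otimes M}=\O_A$ we get $p^*\L_i\cong\O_{\tilde X}$, so $\PP(p^*\E)\cong\tilde X\times\PP^r$ and the induced $q\colon\tilde X\times\PP^r\to\PP(\E)$ is finite étale. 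Using $a\circ g=\Alb(g)\circ a$ and the surjectivity of $[M]$ one checks that, after a further iterate if necessary, $g$ lifts to $\tilde g\colon\tilde X\to\tilde X$ over $p$, and consequently $f$ lifts to $\tilde f\colon\tilde X\times\PP^r\to\tilde X\times\PP^r$ over $\tilde g$. Because $q$ is finite and equivariant, $\lambda_1(\tilde f)=\lambda_1(f)$, and $\alpha_{\tilde f}(\tilde P)=\alpha_f(q(\tilde P))$ for any $\tilde P$; moreover any preimage $\tilde P$ of a dense-orbit point $P$ again has dense $\tilde f$-orbit (its orbit closure is $\tilde f$-invariant and maps onto $\PP(\E)$, hence equals $\tilde X\times\PP^r$ by a dimension count). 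Thus the conjecture for $(\PP(\E),f)$ follows from the conjecture for $(\tilde X\times\PP^r,\tilde f)$. The same finite-cover argument shows the conjecture for $(\tilde X,\tilde g)$ follows from the hypothesis for $(X,g)$, so the base dynamics are controlled (yielding $\alpha_{\tilde f}(\tilde P)\ge\lambda_1(\tilde g)$ via the projection $\tilde X\times\PP^r\to\tilde X$), and it remains to account for the fibre contribution $\lambda_1(\tilde f/\tilde g)$ with the help of the conjecture for $\PP^r$.

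\emph{Main obstacle.} The heart of the matter is the torsion case when the fibres dominate, i.e.\ $\lambda_1(\tilde f/\tilde g)>\lambda_1(\tilde g)$. Here $\tilde f$ need not be a product: the induced endomorphisms of the $\PP^r$-fibres vary with the base point through the transition cocycle of $\E$, so $\alpha_{\tilde f}(\tilde P)\ge\lambda_1(\tilde f/\tilde g)$ does not follow formally from the conjecture for $\PP^r$ and functoriality. Instead one must show that the relative canonical height $\hat h_{\tilde f/\tilde g}(\tilde P):=\lim_n h_{\O_{\PP^r}(1)}(\mathrm{pr}_2(\tilde f^n(\tilde P)))/\lambda_1(\tilde f/\tilde g)^{n}$ is strictly positive whenever $O_{\tilde f}(\tilde P)$ is Zariski dense — equivalently, that the heights of the fibre coordinates grow at the maximal rate $\lambda_1(\tilde f/\tilde g)^{n}$ — for otherwise the forward orbit would be trapped in a proper subvariety. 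The transition-function description makes the relevant height recursion explicit, which is exactly what is needed to carry this out; once the positivity is in place one obtains $\alpha_{\tilde f}(\tilde P)\ge\lambda_1(\tilde f/\tilde g)$, and combining this with the base estimate $\alpha_{\tilde f}(\tilde P)\ge\lambda_1(\tilde g)$ and the universal bound $\alpha_{\tilde f}(\tilde P)\le\lambda_1(\tilde f)=\max\{\lambda_1(\tilde g),\lambda_1(\tilde f/\tilde g)\}$ completes the proof.
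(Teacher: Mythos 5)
Your non-torsion case is correct and is essentially the paper's proof (Theorem~\ref{thm:mainThmAnontorsion} together with Corollary~\ref{cor:standard1}). The torsion case, however, contains a genuine gap that you yourself flag as the ``main obstacle'' — and that obstacle is precisely the content you would need to supply.

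The issue is strategic. You pull $\PP(\E)$ back along the finite \'etale cover $p\colon\tilde X = X\times_{a,A,[M]}A\to X$, obtaining $\tilde X\times\PP^r$, and then need the Kawaguchi--Silverman conjecture for the lifted endomorphism $\tilde f$ of $\tilde X\times\PP^r$. But $\tilde X$ is only a finite cover of an \emph{arbitrary} smooth projective variety $X$; there is no known result giving KS for fibrewise endomorphisms of $\tilde X\times\PP^r$ for such $\tilde X$, and your sketch of a ``relative canonical height'' positivity argument is exactly the kind of statement that is hard in general and is not established here (nor can it be waved through via the transition-function formalism, since after your base change $p^*\E$ is already trivial and the cocycle plays no role). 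The paper sidesteps this by going in the opposite direction: rather than lifting to a cover of $X$, it first \emph{descends} the bundle and the endomorphism to $\Alb(X)$ using Proposition~\ref{prop:pullbackReduction} and Corollary~\ref{cor:pullbackRed} (verifying a global-section count to push $f$ down to $\widehat f$ on $\PP(\bigoplus \W_i)$ over the abelian variety). Only \emph{then} does it perform the multiplication-by-$[N^2]$ covering trick (Lemma~\ref{lemma:nsquaredlemma}, Proposition~\ref{prop:abeliantorsion}), so that the relevant product is $A\times\PP^r$ with $A$ abelian, where KS is known by \cite[Theorem~1.3]{sano2020products}. That citation — applied over an abelian base — is the missing ingredient in your argument, and it is not available over $\tilde X$. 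Without reducing to the abelian case first, the torsion case remains open in your write-up.
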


In \cite[Theorem 1.3]{matsuzawasanoshibata2018surfaces}, \hyperref[conj:KSC]{KS}~Conjecture was proven for all surjective endomorphisms of projective surfaces. A major part of the proof is the treatment of surfaces of the form $\PP(\L_0\oplus \L_1)$ where $\L_0$ and $\L_1$ are numerically trivial line bundles on a elliptic curve $C$; see \cite[Section 6.2]{matsuzawasanoshibata2018surfaces}. However, their arguments do not generalize to higher rank. The transition function method allows us to recover these results, but without restrictions on the rank of the bundle, and without restriction on the dimension of the base variety. For example, Corollary~\ref{intro:cor2} and \cite[Theorem on p.~998]{bauer1998abelian} proves \hyperref[conj:KSC]{KS}~Conjecture for projective bundles on an abelian variety isogenous to a product of pairwise non-isogenous abelian varieties of Picard number 1.

Finally, Corollaries~\ref{intro:cor1} and \ref{intro:cor2} complete the \hyperref[conj:KSC]{KS}~Conjecture for all projective bundles on smooth curves.

\begin{corollary}
	Let $C$ be a smooth projective curve defined over $\Qbar$. For any vector bundle $\E$ on $C$, the Kawaguchi--Silverman conjecture holds for any surjective endomorphism of $\PP(\E)$. 
\end{corollary}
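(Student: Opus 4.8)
The plan is to reduce the general statement for an arbitrary vector bundle $\E$ on a smooth projective curve $C$ to the two cases already handled by Corollaries~\ref{intro:cor1} and \ref{intro:cor2}, together with the known cases of the \hyperref[conj:KSC]{KS}~Conjecture for curves themselves. First I would invoke the classical fact that a smooth projective curve admits a non-isomorphic surjective endomorphism only when it has genus $0$ or $1$, and that in higher genus every surjective endomorphism is an automorphism. So by \cite[Lemma~6.2]{lesieutresatriano2021ksc}, up to iteration any surjective endomorphism $f$ of $\PP(\E)$ descends to a surjective endomorphism $g$ of $C$; if $g(C) = C$ forces $g$ to be finite of degree $\ge 2$ only when $g(C)$ has genus $\le 1$, so we may split into: (i) $C$ of genus $\ge 2$, where $g$ is an automorphism; (ii) $C = \PP^1$; and (iii) $C$ an elliptic curve.

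In case (iii), an elliptic curve, the bundle $\E$ can be replaced by its Atiyah--Seshadri decomposition up to twisting: by Atiyah's classification, after pulling back along a suitable isogeny (which itself is a surjective endomorphism of the base and is compatible with \hyperref[conj:KSC]{KS} via the Albanese), $\E$ becomes a direct sum of vector bundles each of the form $L \otimes F_r$ where $L$ is a line bundle and $F_r$ is the indecomposable Atiyah bundle. Since $\PP(\E) \cong \PP(\E \otimes L')$ for any line bundle $L'$, and since a surjective endomorphism up to iteration preserves the degree on fibres, we can normalize to degree $0$; then either $\E$ is a direct sum of line bundles, which is handled by Corollary~\ref{intro:cor2} (taking $X = C$, which has finitely generated — indeed one-dimensional — Mori cone, and for which \hyperref[conj:KSC]{KS} is known for all surjective endomorphisms by the elliptic curve case of \cite{kawaguchisilverman2016degrees} or \cite{silverman2017arithmetic}), or $\E$ is semistable non-split of degree zero, which is Corollary~\ref{intro:cor1}. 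The mild subtlety here is that a general degree-zero bundle on an elliptic curve need not be semistable, but its Harder--Narasimhan / Atiyah pieces are, and one must check that the dynamics respect this filtration; Theorem~\ref{intro:main1} is exactly the input that lets us conclude degree one on fibres in all non-split cases, so in fact $\alpha_f(P) = \lambda_1(f) = \lambda_1(g)$ follows by combining with \hyperref[conj:KSC]{KS} on $C$ via the fibration argument (as in the proof of Corollary~\ref{intro:cor1}).

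For case (ii), $C = \PP^1$, the bundle $\E$ splits as a direct sum of line bundles by Grothendieck's theorem, so Corollary~\ref{intro:cor2} applies directly with $X = \PP^1$: its Mori cone is finitely generated and \hyperref[conj:KSC]{KS} holds for all surjective endomorphisms of $\PP^1$ (these are just rational maps $\PP^1 \to \PP^1$, for which the conjecture is classical — it is essentially the statement that the canonical height of a non-preperiodic point grows like $\deg(g)^n$). For case (i), genus $\ge 2$, $g$ is an automorphism so $\lambda_1(g) = 1$; then $f$ has dynamical degree equal to its degree on fibres, and one checks as in Theorem~\ref{intro:main2}'s proof strategy that in fact the fibre degree must be $1$ as well (a projective bundle over a variety admitting no nontrivial endomorphism dynamics cannot support fibrewise-expanding maps by a similar transition-function or positivity argument), making $f$ essentially an automorphism up to iteration; the conjecture for points with Zariski-dense orbit is then vacuous or reduces to height boundedness. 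Assembling these three cases gives the corollary.

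The main obstacle I anticipate is bookkeeping in case (iii): ensuring that the reduction via isogeny and twisting is genuinely compatible with the arithmetic degree (one needs that $\alpha_f(P)$ is unchanged under pullback along a finite étale cover and under the isomorphism $\PP(\E) \cong \PP(\E \otimes L')$, which follows from functoriality of heights but should be stated), and correctly decomposing a non-semistable degree-zero bundle so that Corollaries~\ref{intro:cor1} and~\ref{intro:cor2} between them cover every summand. The other cases are comparatively soft, relying only on the genus trichotomy for endomorphisms of curves, Grothendieck's splitting theorem on $\PP^1$, and the already-established validity of \hyperref[conj:KSC]{KS} for curves.
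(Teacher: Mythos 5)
Your proof proposal contains several genuine gaps, and the paper's intended argument is actually simpler and quite different.

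\paragraph{Case (i), genus $\ge 2$.} Your claim that the fibre degree must be $1$ is false. Take $\E = \O_C^{\oplus 2}$ on a curve $C$ of genus $\ge 2$, so $\PP(\E) = C \times \PP^1$. The endomorphism $(c,p) \mapsto (c,\phi(p))$ for any degree-$d$ self-map $\phi$ of $\PP^1$ has fibre degree $d > 1$, while the induced map on $C$ is the identity. The transition-function mechanism in Theorem~\ref{intro:main1} only forbids high fibre degree for \emph{non-split} bundles (those whose transition matrices have a Jordan-block structure); it does nothing for split bundles. So you cannot deduce ``no fibrewise-expanding maps'' from the absence of nontrivial dynamics on $C$.

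\paragraph{Case (ii), $C = \PP^1$.} Corollary~\ref{intro:cor2} demands that all of the $\L_i$ be \emph{numerically trivial}; on $\PP^1$ the only numerically trivial line bundle is $\O$. Twisting by a line bundle $\L'$ shifts all the $n_i$ by the same integer, so for a generic $\E = \bigoplus \O_{\PP^1}(n_i)$ there is no way to make every summand numerically trivial. Corollary~\ref{intro:cor2} therefore does not apply. The correct route for $\PP^1$ is that $\PP(\E)$ is a toric variety (hence a Mori dream space) and the conjecture holds by \cite[Theorem~4.1]{matsuzawa2020ksc}.

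\paragraph{Case (iii), elliptic curve.} Twisting by $\L'$ changes $\deg\E$ by $\rank(\E)\cdot\deg\L'$, so you can only normalize $\deg\E$ modulo $\rank\E$, not always to $0$. Moreover, degree-$0$ bundles on an elliptic curve need not be semistable (e.g.\ $\O(P)\oplus\O(-P)$), and neither Corollary~\ref{intro:cor1} (which requires semistability) nor Corollary~\ref{intro:cor2} (which requires every summand numerically trivial) covers that case. You flag this as a ``mild subtlety,'' but resolving it is essentially the entire content of the reduction theorem from \cite{lesieutresatriano2021ksc}: they show that for projective bundles on curves the conjecture reduces precisely to the case of semistable degree-$0$ bundles on elliptic curves, handling the unstable and non-elliptic cases by entirely different (positivity and MMP) arguments, not by the transition-function method. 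The paper's proof of the corollary is then a one-line combination: invoke that reduction, and dispatch the remaining semistable degree-$0$ elliptic case via Corollaries~\ref{intro:cor1} (non-split) and~\ref{intro:cor2} (split). Rebuilding the reduction from scratch, as you attempt, is not necessary and introduces the errors above.
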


In general, the \hyperref[conj:KSC]{KS}~Conjecture concerns \textit{dominant rational maps} as opposed to endomorphisms, such as in our results. However, there are substantial challenges to working with rational maps, particularly since computing $\lambda_1(f)$ or $\alpha_f(P)$ for $f$ a dominant rational map is very difficult. For example, in \cite[Main~Theorem]{belldillerjonsson2020transcendental}, Bell, Diller, and Jonsson construct $f \colon \PP^2 \dashrightarrow \PP^2$ of the form
\begin{align*}
	f = g \circ h, & & g(y_1, y_2) = \biggl(-y_1 \frac{1 - y_1 + y_2}{1 - y_1 - y_2}, -y_2 \frac{1 + y_1 - y_2}{1 - y_1 - y_2} \biggr), & & h(y_1, y_2) = (y_1^a y_2^b, y_1^{-b} y_2^a),
\end{align*}
where $a,b \in \ZZ$ are such that $(a+bi)^n \not\in \RR$ for all $n \geq 1$. This rational map $f$ is such that $\lambda_1(f)$ is transcendental. See \cite[Theorem~1.1]{belldillerjonssonkrieger2024transcendental} for a similar result on $\PP^n$. Consequently, the Kawaguchi--Silverman conjecture is completely open for dominant rational maps $\PP^n\dashrightarrow \PP^n$ except in some special cases such as \emph{monomial} or \emph{regular affine automorphisms}; see \cite[Theorem~2(d), Theorem~3]{kawaguchisilverman2014examples}, \cite[Theorem~4]{gregoryasufuku2011monomial}, and \cite[Theorem~A]{lin2019monomial}.

For these reasons, much of the current literature only verifies the \hyperref[conj:KSC]{KS}~Conjecture for surjective endomorphisms. For instance, the conjecture has been verified for surjective endomorphisms of smooth projective surfaces in \cite[Theorem~1.3]{matsuzawasanoshibata2018surfaces}; rationally connected varieties and klt projective varieties admitting an int-amplified endomorphism in \cite[Theorem~1.1]{matsuzawayoshikawa2019intamp} and \cite[Main~Theorem]{mengzhong2024ksc} respectively; and most recently for non-isomorphic endomorphisms of smooth projective threefolds in \cite[Theorem 1.1]{mengzhang2023threefold}. Each of these papers made use of the minimal model program (MMP) in order to reduce the problem to a simpler type of variety, where the conjecture can be tackled directly. Notably, projective bundles form one of the possible endpoints of the MMP, which must be handled directly. From this perspective, our results expand the possible applications of the MMP to the dynamics of varieties.

Other examples where the \hyperref[conj:KSC]{KS}~Conjecture has been verified include Mori dream spaces in \cite[Theorem~4.1]{matsuzawa2020ksc} and hyper--K\"ahler varieties in \cite[Theorem~1.2]{lesieutresatriano2021ksc}. Both of these classes contain varieties with dynamically interesting surjective endomorphisms. For example, toric varieties are all Mori dream spaces and have many endomorphisms; while hyper--K\"ahler varieties may have automorphisms of positive entropy. However, many of the varieties in these classes are much more mysterious, and it is unclear whether Mori dream spaces or hyper--K\"ahler varieties have dynamically interesting morphisms except in small dimensions. In contrast, our transition function method allows us to construct explicit endomorphisms of projective bundles and study their dynamics; see Section~\ref{sec:examples} for some examples. Moreover, we are even able to use this technique to produce explicit dominant rational maps of projective bundles. This allows for the possibility of an explicit study of dynamical systems on projective bundles, analogous to the study of dynamics on projective spaces.	
\end{subsection}

\subsection*{Proof Strategy}

The primary technique we introduce is the transition function method. In spirit this is the projective bundle generalization of the fact that a surjective endomorphism of $\PP^n$ is given by $n+1$ homogeneous polynomials of degree $d$ with no common zero. Surprisingly, this generalization does not seem to appear in the literature. Roughly speaking the transition function method is as follows: Choose a collection of trivializing charts $U_i$ for $\E$. A surjective endomorphism which is degree $d$ on the fibres of the bundle projection $\PP(\E)\rightarrow X$ corresponds to homogeneous degree $d$ polynomials with coefficients in $\O_X(U_i)$ with no common zero, subject to compatibility conditions determined by the transition functions of $\E$. We show in cases of interest that if the degree on the fibres is too large, the compatibility conditions force the polynomials to have a common zero, thereby proving that no morphism of the specified degree on the fibres can exist.

This technique manifests itself in two ways which is reflected in the structure of the paper. First, having explicit transition functions allows us to write down surjective endomorphisms. We apply this in Section~\ref{sec:nonsplit} to prove Theorem~\ref{intro:main1} directly for non-split bundles on elliptic curves. Second, even if we do not have access to explicit transition functions, we may still impose conditions on the existence of surjective endomorphisms. For instance, in Section~\ref{sec:splitcase}, we show that if $X$ and $\E = \bigoplus \L_i$ satisfy the assumptions of Theorem~\ref{intro:main2}, then the existence of surjective endomorphisms is closely tied to the existence of global sections of tensor products of the $\L_i$. In particular, if they are all numerically trivial, this imposes relations on the bundles, and allows us to reduce to the situation where all of the $\L_i$ being torsion.

In \cite{matsuzawasanoshibata2018surfaces} the authors show that that if $\L_0,\L_1$ are torsion line bundles on an elliptic curve $C$ then the dynamics of endomorphisms of $\PP(\L_0\oplus \L_1)$ descend to the dynamics of endomorphisms of $C\times \PP^1$. As the dynamics on this latter variety are well understood they deduce the Kawaguchi-Silverman conjecture. We generalize this in two ways. First we generalize their technique to abelian varieties of arbitrary dimension; see Proposition~\ref{prop:abeliantorsion}. Afterwards, we use the Albanese variety to reduce from torsion line bundles on $X$ to torsion line bundles on an abelian variety, where we apply the generalization of their result; see Proposition~\ref{prop:pullbackReduction} and Corollary~\ref{cor:pullbackRed}. 

\section{Preliminaries}
\label{sec:Prelim}

We discuss some tools for studying the dynamics of projective bundles and how this relates to the \hyperref[conj:KSC]{KS}~Conjecture. Fix $X$ to be a normal projective variety over $\Qbar$ and let $\E$ be a vector bundle on $X$. Central to the study of surjective endomorphisms of projective bundles are commutative diagrams
\begin{equation}
\label{eq:dagger}
\begin{tikzcd}
\PP(\E) \arrow[r, "f"] \arrow[d, "\pi" left] & \PP(\E) \arrow[d, "\pi" right] \\
X \arrow[r, "g"] & X
\end{tikzcd}
\tag{$\dagger$}
\end{equation}
where $f$ and $g$ are surjective morphisms. Following \cite[Tag 01OB]{stacks-project}, we use $\O_{\PP(\E)}(1)$ to denote the canonical quotient of $\pi^* \E$; the restriction of $\O_{\PP(\E)}(1)$ to the fibres of $\pi$ are the bundles $\O_{\PP^{\rank(\E) - 1}}(1)$. If $X$ is a smooth projective variety and $f\colon \PP(\E)\rightarrow \PP(\E)$ is any surjective endomorphism, then there is some integer $n\geq 1$ and a surjective endomorphism $g\colon X\rightarrow X$ such that the diagram
\begin{equation*}
\begin{tikzcd}
\PP(\E) \arrow[r, "f^n"] \arrow[d, "\pi" left] & \PP(\E)\arrow[d, "\pi"] \\ 
X \arrow[r, "g"] & X
\end{tikzcd} 
\end{equation*}
commutes. In fact, such an iterate can be found when one replaces $\PP(\E)$ with any Mori-fibre space on $X$; see \cite[Lemma~6.2]{lesieutresatriano2021ksc} for a proof. Moreover, If $X$ is any normal projective variety then by \cite[Lemma~3.3]{sano2020products} the \hyperref[conj:KSC]{KS}~Conjecture is true for $f$ if and only if the \hyperref[conj:KSC]{KS}~Conjecture is true for $f^n$ for any integer $n\geq 1$. Combining the previous two remarks allows us to replace the morphism $f\colon \PP(\E)\rightarrow \PP(\E)$ with some iterate so that we are always in the situation of a commutative diagram \eqref{eq:dagger}.

This separates the dynamics $f$ into two pieces: the dynamics of $g$ and the dynamics of the induced morphisms on the fibres of $\pi$. The relative dynamical degree makes this precise.

\begin{defn}[{\cite[Definition 2.1]{lesieutresatriano2021ksc}}]
\label{def:relDyndeg}
Let $X$ and $Y$ be normal projective varieties. Suppose that we have a diagram
\begin{equation*}
\begin{tikzcd}
    X \arrow[r, "f"] \arrow[d, "\pi" left] & X \arrow[d, "\pi"] \\ Y \arrow[r, "g"] & Y
\end{tikzcd}
\end{equation*}
where $f,g$, and $\pi$ are surjective morphisms. Fix ample divisors $H$ on $X$ and $W$ on $Y$. We define the \textit{dynamical degree of $f$ relative to $\pi$} by the formula
\begin{equation*}
\lambda_1(f\vert_\pi) \coloneqq \lim_{n\rightarrow \infty}\left((f^n)^*H\cdot (\pi^*W^{\dim Y})\cdot H^{\dim X-\dim Y-1}\right)^{\frac{1}{n}}.
\end{equation*}
\end{defn}
This limit exists and is independent of $H$ and $W$ by \cite[Theorem~1.1]{truong2020relative}. The first dynamical degree is closely connected to the relative dynamical degree. In particular, \cite[Theorem~2.2.2]{lesieutresatriano2021ksc} gives $\lambda_1(f)=\max\{\lambda_1(g),\lambda_1(f\vert_\pi)\}$.
As a consequence, we have a standard approach for attacking the \hyperref[conj:KSC]{KS}~Conjecture.

\begin{corollary}[{\cite[Corollary 3.2]{lesieutresatriano2021ksc}\label{cor:standard1}}]
Suppose that we have a commuting diagram of normal projective varieties defined over $\Qbar$
\begin{equation*}
\begin{tikzcd}
    X \arrow[r, "f"] \arrow[d, "\pi" left] & X \arrow[d, "\pi"] \\ Y \arrow[r, "g"] & Y
\end{tikzcd}
\end{equation*}
where $f,g,\pi$ are all surjective morphisms. When the Kawaguchi--Silverman conjecture holds for $g$ and $\lambda_1(f)=\lambda_1(g)$, the Kawaguchi--Silverman conjecture holds for $f$. 
\end{corollary}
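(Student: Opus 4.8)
The plan is to verify the conjecture pointwise. Fix $P\in X(\Qbar)$ whose forward orbit $O_f(P)$ is Zariski dense in $X$, and fix ample divisors $H$ on $X$ and $W$ on $Y$. Writing $\overline{\alpha}_f(P)$ and $\underline{\alpha}_f(P)$ for the limit superior and limit inferior of $h_H^+(f^n(P))^{1/n}$, it suffices to prove the two inequalities $\overline{\alpha}_f(P)\le\lambda_1(f)$ and $\underline{\alpha}_f(P)\ge\lambda_1(f)$: the sandwich $\lambda_1(f)\le\underline{\alpha}_f(P)\le\overline{\alpha}_f(P)\le\lambda_1(f)$ then forces the limit $\alpha_f(P)$ to exist and equal $\lambda_1(f)$.

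The first inequality is free: for any surjective endomorphism $f$ one always has $\overline{\alpha}_f(P)\le\lambda_1(f)$, a theorem in the basic theory of arithmetic degrees (see \cite{kawaguchisilverman2016degrees}), and it uses none of the hypotheses of the corollary. For the second inequality I would descend to $Y$. Commutativity of the square gives $\pi(O_f(P))=\{\pi(f^n(P)):n\ge 0\}=\{g^n(\pi(P)):n\ge 0\}=O_g(\pi(P))$, and since $\pi$ is surjective the Zariski closure of the image of a dense set is all of $Y$; hence $O_g(\pi(P))$ is Zariski dense in $Y$. The hypothesis that the Kawaguchi--Silverman conjecture holds for $g$ then says that $\alpha_g(\pi(P))=\lim_n h_W^+(g^n(\pi(P)))^{1/n}$ exists and equals $\lambda_1(g)$.

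It remains to transfer this lower bound from $Y$ to $X$, which is where functoriality of Weil heights enters. Since $H$ is ample and $\pi^*W$ is a fixed line bundle on the projective variety $X$, standard height estimates produce an $m\ge 1$ with $h_{\pi^*W}(Q)\le m\,h_H(Q)+O(1)$ for all $Q\in X(\Qbar)$; combined with the functoriality relation $h_W(\pi(Q))=h_{\pi^*W}(Q)+O(1)$ this yields $h_W^+(g^n(\pi(P)))\le m\,h_H^+(f^n(P))+O(1)$ for all $n$. Taking $n$-th roots and passing to the limit inferior gives $\underline{\alpha}_f(P)\ge\lambda_1(g)$, and since $\lambda_1(f)=\lambda_1(g)$ by hypothesis we obtain $\underline{\alpha}_f(P)\ge\lambda_1(f)$, as needed.

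This argument is essentially formal: it combines the universal upper bound for arithmetic degrees, the descent of Zariski density along the surjection $\pi$, functoriality of heights, and the hypothesis on $g$, so I do not expect a real obstacle. The only point requiring a little care is the last limit-inferior computation: one separates the trivial case $\lambda_1(g)=1$, where $\underline{\alpha}_f(P)\ge 1=\lambda_1(g)$ is immediate from $h_H^+\ge 1$, from the case $\lambda_1(g)>1$, where $h_W^+(g^n(\pi(P)))$ grows geometrically at rate $\lambda_1(g)$ and so dominates the additive $O(1)$ error after extraction of $n$-th roots.
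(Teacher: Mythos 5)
Your proof is correct and follows essentially the same route as the paper's: descend along $\pi$ to get $\alpha_f(P)\ge\alpha_g(\pi(P))=\lambda_1(g)=\lambda_1(f)$, and close with the general upper bound $\overline{\alpha}_f(P)\le\lambda_1(f)$ (the paper cites \cite{matsuzawa2020bounds} for this; your citation to \cite{kawaguchisilverman2016degrees} suffices since $f$ is a morphism of normal projective varieties). The only difference is that the paper states $\alpha_f(P)\ge\alpha_g(\pi(P))$ as a known fact, whereas you prove it from scratch via height functoriality; your argument there is sound, and the case split on $\lambda_1(g)=1$ versus $\lambda_1(g)>1$ at the end is unnecessary, since $h_H^+\ge 1$ lets you absorb the additive $O(1)$ into a multiplicative constant whose $n$-th root tends to $1$ regardless.
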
	
\begin{proof}
	Let $P\in X(\Qbar)$ be a point with dense orbit under $f$. The image $\pi(P)$ has a dense orbit under $g$ and we have $\alpha_f(P)\geq \alpha_g(\pi(P))=\lambda_1(g)=\lambda_1(f)$ by the Kawaguchi--Silverman conjecture for $g$ and our assumption on the dynamical degree. Moreover, by \cite[Theorem~1.4]{matsuzawa2020bounds} we have $\alpha_f(P)\leq \lambda_1(f)$ and the result follows. 
\end{proof}

When we have a diagram \eqref{eq:dagger}, the fibres of $\pi$ are projective spaces, and the relative dynamical degree can be related to the degree of $f$ restricted to the fibres of $\pi$. 
\begin{proposition}\label{prop:fibreProp}
Let $X$ be a normal projective variety and let $\E$ be a vector bundle on $X$. Suppose that we have a diagram \eqref{eq:dagger}. We have that
\begin{equation*}
f^*(\O_{\PP(\E)}(1))\equiv_{\textnormal{lin}} \O_{\PP(\E)}(\lambda_1(f\vert_\pi))\otimes\pi^*\B
\end{equation*}
for some line bundle $\B$ on $X$.
\end{proposition}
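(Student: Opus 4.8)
The plan is to analyze how $f$ interacts with the Picard group of $\PP(\E)$, which by the standard description of projective bundles decomposes as $\Pic(\PP(\E)) \cong \ZZ \cdot \O_{\PP(\E)}(1) \oplus \pi^* \Pic(X)$ (using that $\pi$ has a section, or at least that numerically $N^1(\PP(\E))$ splits this way). First I would write $f^* \O_{\PP(\E)}(1) \lin \O_{\PP(\E)}(m) \otimes \pi^* \B$ for some integer $m$ and some line bundle $\B$ on $X$; this is automatic from the decomposition of $\Pic$. The content of the proposition is then the identification $m = \lambda_1(f\vert_\pi)$. To pin down $m$, I would restrict both sides to a fibre $F \cong \PP^{\rank(\E)-1}$ of $\pi$. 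Because the diagram \eqref{eq:dagger} commutes, $f$ maps the fibre $F$ over a point $x$ into the fibre $F'$ over $g(x)$, and $\O_{\PP(\E)}(1)\vert_{F'} \cong \O_{\PP^{r}}(1)$ while $\pi^*\B\vert_F$ is trivial. Hence $f\vert_F^* \O_{\PP^{r}}(1) \cong \O_{\PP^{r}}(m)$, which says precisely that $f\vert_F \colon F \to F'$ is a morphism of degree $m$ between projective spaces; in particular $m \geq 0$, and $m$ is the degree of $f$ on the fibres.

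The remaining step is to equate this fibrewise degree with the relative dynamical degree $\lambda_1(f\vert_\pi)$. Here I would use the defining formula in Definition~\ref{def:relDyndeg}. Iterating the relation gives $(f^n)^* \O_{\PP(\E)}(1) \lin \O_{\PP(\E)}(m^n) \otimes \pi^* \B_n$ for a suitable $\B_n$ (the degree is multiplicative under composition since $\deg(f^n\vert_F) = (\deg f\vert_F)^n$, which one also sees directly from pushing the linear-equivalence relation through $f^*$). Choosing $H$ to be an ample divisor of the form $\O_{\PP(\E)}(1) \otimes \pi^* A$ for $A$ sufficiently ample on $X$ (such $H$ exists and is ample for $A \gg 0$, and relative dynamical degree is independent of the choice of $H$), I would expand
\begin{equation*}
(f^n)^* H \cdot (\pi^* W^{\dim X - 1 - \dim \PP(\E)/X}) \cdot H^{\dim \PP(\E) - \dim X - 1 + 1}
\end{equation*}
— more carefully, I would plug $(f^n)^*H \equiv \O_{\PP(\E)}(m^n) \otimes \pi^*(\cdots)$ into the intersection number appearing in $\lambda_1(f\vert_\pi)$. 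Every term involving a positive power of a class pulled back from $X$ beyond $\dim X$ vanishes, so the dominant contribution is $m^n$ times a fixed positive intersection number of $\O_{\PP(\E)}(1)$ with fibre classes; taking $n$-th roots and letting $n \to \infty$ yields $\lambda_1(f\vert_\pi) = m$.

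I expect the main obstacle to be bookkeeping rather than conceptual: making the intersection-number expansion clean requires choosing $H$ and $W$ wisely so that the $\pi^*$-terms truncate correctly, and one must be careful that the relation $f^*\O_{\PP(\E)}(1) \lin \O_{\PP(\E)}(m)\otimes\pi^*\B$ holds on the nose in $\Pic$ (not merely numerically) — this is where I rely on $\Pic(\PP(\E))$ being generated by $\O_{\PP(\E)}(1)$ and $\pi^*\Pic(X)$, which holds for a projective bundle in the sense of a projectivized vector bundle. A cleaner alternative for the last step, avoiding the explicit intersection computation, is to invoke the product/fibration formula $\lambda_1(f\vert_\pi)$ for the trivial fibration $\PP^r$, i.e. that the relative dynamical degree of $f$ over $\pi$ equals the dynamical degree of the induced map on a general fibre, which is $\deg(f\vert_F) = m$; this is essentially \cite[Theorem~1.1]{truong2020relative} applied fibrewise. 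Either route closes the argument, and the statement follows by taking $\B$ as produced in the first paragraph.
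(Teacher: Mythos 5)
Your proposal follows essentially the same route as the paper's proof: decompose $\Pic(\PP(\E))$ via $\O_{\PP(\E)}(1)$ and $\pi^*\Pic(X)$ to write $f^*\O_{\PP(\E)}(1) \lin \O_{\PP(\E)}(m)\otimes\pi^*\B$, take an ample $H$ of the form $\O_{\PP(\E)}(1)\otimes\pi^*W$, and expand the intersection number of Definition~\ref{def:relDyndeg} to read off $\lambda_1(f\vert_\pi)=m$. The only inessential difference is how the ample $H$ is produced (you quote that twisting $\E$ by a sufficiently ample $A$ makes it ample, while the paper runs a short nef-cone argument), and the garbled exponents in your intermediate display are just the bookkeeping you already flagged, not a gap.
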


\begin{proof}
Set $\L=\O_{\PP(\E)}(1)$ and fix an ample line bundle $\A$ on $X$. For large enough $N$, we have that $\E \otimes \A^N$ is globally generated, so that $\O_{\PP(\E \otimes \A^N)}(1) = \L \otimes \pi^* \A^N$ is globally generated and in particular nef. Since the Picard group of $\PP(\E)$ is generated by $\L$ and $\Pic(X)$, we must have that $\L \otimes \pi^* \A^{N}\otimes \pi^*\mathcal{C}$ is ample for some nef line bundle $\mathcal{C}$ on $X$, since otherwise $\L + \operatorname{Nef}(X)$ is a full-dimensional cone in $\Pic(\PP(\E))_\QQ$ lying on the boundary of $\operatorname{Nef}(\PP(\E))$. Hence, $W \coloneqq \A^{N}\otimes\mathcal{C}$ and $H \coloneqq \L + \pi^* W$ are ample on $X$, where addition here means in $\Pic(\PP(\E))_\QQ$.

For any nonnegative integer $n$, we may write $(f^n)^*H=\lambda^n\L+\pi^*\B_n$ for some $\B_n \in \Pic(X)$ and $\lambda \in \ZZ$. Following Definition~\ref{def:relDyndeg}, we see
\begin{align*}
    (f^n)^* H \cdot \pi^* W^{\dim X} \cdot H^{\dim \PP(\E) - \dim X - 1} &= (\lambda^n\L+\pi^*\B_n) \cdot \pi^* W^{\dim X} \cdot (\L+\pi^* W)^{r-2} \\
    &= \lambda^n \L^{r-1} \cdot \pi^* W^{\dim X} + \lambda^n \L \cdot \pi^* W^{\dim X + r -2} \\ 
    &~~~ + \L \cdot \pi^* \B_n \cdot \pi^* W^{\dim X} + \pi^* \B_n \cdot \pi^* W^{\dim X + r -2}
\end{align*}
where $r$ is the rank of $\E$. Since the intersection of more than $\dim X$ pullbacks of classes on $X$ is zero, and $\L^{r-2} = \L \cdot \pi^* W = \pi^* W^{\dim X} = 1$, we obtain
\begin{equation*}
(f^n)^* H \cdot \pi^* W^{\dim X} \cdot H^{\dim \PP(\E) - \dim X - 1} = \begin{cases} 2\lambda^n & r = 2 \\ \lambda^n & r > 2 \end{cases}.
\end{equation*}
Taking the limit of the $n$th root of this, we get $\lambda_1(f\vert_\pi) = \lambda$ and the result follows.
\end{proof}

Proposition~\ref{prop:fibreProp} further shows that the relative dynamical degree is an integer when the fibres are projective spaces and $f$ is a morphism. This is further emphasized by the following proposition, which shows that the relative dynamical degree is equal to the degree of $f$ on the fibres of $\pi$.

\begin{proposition}
Let $X$ be a normal projective variety and let $\E$ be a vector bundle on $X$. Suppose that we have a diagram \eqref{eq:dagger}. The relative dynamical degree $\lambda_1(f\vert_\pi)$ is the degree of $f$ on the fibres of $\pi$.
\end{proposition}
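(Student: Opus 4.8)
The plan is to use Proposition~\ref{prop:fibreProp} together with a direct computation of the degree of $f$ restricted to a single fibre of $\pi$. First I would recall that by \cite[Lemma~6.2]{lesieutresatriano2021ksc} (or rather the diagram \eqref{eq:dagger} itself), $f$ carries the fibre $F_x = \pi^{-1}(x)$ isomorphically onto (or at least surjectively, finite-to-one, onto) the fibre $F_{g(x)} = \pi^{-1}(g(x))$, so that $f|_{F_x}\colon F_x \to F_{g(x)}$ is a surjective morphism of projective spaces $\PP^{r-1}\to\PP^{r-1}$, where $r = \rank(\E)$. The \emph{degree of $f$ on the fibres} is by definition the degree of this map, which — since a surjective endomorphism of $\PP^{r-1}$ pulls $\O(1)$ back to $\O(d)$ — equals the integer $d$ such that $(f|_{F_x})^*\O_{F_{g(x)}}(1) \cong \O_{F_x}(d)$.

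Next I would compute this $d$ via the identification $\O_{\PP(\E)}(1)|_{F_x} \cong \O_{\PP^{r-1}}(1)$ recorded in the preliminaries. By Proposition~\ref{prop:fibreProp} we have $f^*\O_{\PP(\E)}(1) \lin \O_{\PP(\E)}(\lambda_1(f|_\pi)) \otimes \pi^*\B$ for some line bundle $\B$ on $X$. Restricting both sides to the fibre $F_x$ and using that $\pi^*\B|_{F_x}$ is trivial (since $\pi(F_x)$ is a point), one gets
\begin{equation*}
(f|_{F_x})^*\bigl(\O_{\PP(\E)}(1)|_{F_{g(x)}}\bigr) \cong \O_{\PP(\E)}(1)|_{F_x}^{\otimes \lambda_1(f|_\pi)},
\end{equation*}
i.e. $(f|_{F_x})^*\O_{\PP^{r-1}}(1) \cong \O_{\PP^{r-1}}(\lambda_1(f|_\pi))$. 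Comparing with the previous paragraph, this forces $d = \lambda_1(f|_\pi)$, which is exactly the claim.

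The only genuine subtlety — and the main obstacle to watch — is the compatibility of restriction with pullback along $f$: one must check that $f^*(\mathcal{M})|_{F_x}$ is naturally isomorphic to $(f|_{F_x})^*(\mathcal{M}|_{F_{g(x)}})$ for a line bundle $\mathcal{M}$ on $\PP(\E)$, which holds because $f$ restricts to $f|_{F_x}\colon F_x \to F_{g(x)}$ and pullback of line bundles is functorial and compatible with base change. One should also confirm that $f$ genuinely sends fibres to fibres in the setting of diagram \eqref{eq:dagger} — but this is immediate from commutativity, since $\pi(f(F_x)) = g(\pi(F_x)) = \{g(x)\}$. I would therefore organize the proof as: (1) $f$ maps $F_x$ onto $F_{g(x)}$, both $\cong \PP^{r-1}$; (2) recall that $\deg(f|_{F_x})$ is the $\O(1)$-twist exponent; (3) restrict the identity of Proposition~\ref{prop:fibreProp} to $F_x$ and read off that this exponent is $\lambda_1(f|_\pi)$.
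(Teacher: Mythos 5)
Your proposal is correct and follows essentially the same route as the paper: both invoke Proposition~\ref{prop:fibreProp} to write $f^*\O_{\PP(\E)}(1)$ as $\O_{\PP(\E)}(\lambda_1(f|_\pi))\otimes\pi^*\B$, restrict to a fibre $F_x$ where $\pi^*\B$ trivializes, and use functoriality of pullback along the closed embeddings $F_x\hookrightarrow\PP(\E)$ and $F_{g(x)}\hookrightarrow\PP(\E)$ to read off the degree. The commutative square of closed embeddings you flag as the "genuine subtlety" is exactly the diagram the paper draws to make that step rigorous, so nothing is missing.
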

\begin{proof}
Set $\L=\O_{\PP(\E)}(1)$ and write $f^*\L=\lambda\L+\pi^*\B$ for some line bundle $\B$ on $X$. Fix $x\in X$ and let $F_x$ be the fibre of $\pi$ above $x$. We have 
\begin{equation*}
\begin{tikzcd}
\PP(\E) \arrow[r,"f"] & \PP(\E) \\ F_x \arrow[u, "i_x"] \arrow[r, "f_x"] & F_{g(x)} \arrow[u, "i_{g(x)}" right]
\end{tikzcd}
\end{equation*}
where the vertical morphisms are closed embeddings and $f_x$ is the restriction of $f$ to the fibre above $x$. If $F$ is any fibre then we have $\L\vert_F=\O_{F}(1)$ and $(\lambda\L+\pi^*\B) \vert_F = \lambda\O_{F}(1)$ as the restriction of $\pi^* \B$ to any fibre is trivial. Therefore, we obtain
\begin{equation*}
f_x^*\O_{F_{g(x)}}(1)=f_x^*i_{g(x)}^*\L=i_x^*f^*\L=i_x^*(\lambda\L+\pi^*\B)=\lambda\O_{F_x}(1)
\end{equation*}
and hence the degree of $f$ on the fibres of $\pi$ (obtained after identifying $F_x$ and $F_{g(x)}$ by an arbitrary isomorphisms with $\PP^{\rank(\E)-1}$) is $\lambda_1(f\vert_\pi)$ by Proposition~\ref{prop:fibreProp}.
\end{proof}    
Consequently, if we are in the situation of \eqref{eq:dagger} and the \hyperref[conj:KSC]{KS}~Conjecture is known for $g$, then in order to prove the \hyperref[conj:KSC]{KS}~Conjecture for $f$, it suffices to show that the degree of $f$ on the fibres of $\pi$ is at most $\lambda_1(g)$.

\begin{remark}
The possible degree on the fibres of $f$ depends on the geometry of $\E$. For example if $\E$ is trivial then $\PP(\E)=\PP^{\rank(\E)-1}\times X$ and any degree on the fibres is possible. 
\end{remark}

\subsection*{The transition function method}

The central strategy of this paper involves using transition functions to explicitly describe surjective endomorphisms of projective bundles. Any surjective morphism $f\colon \PP^n\rightarrow \PP^n$ of dynamical degree $\lambda_1(f)=d$ is determined by $n+1$ polynomials $s_0,s_1\ldots,s_n$ of degree $d$ without a common zero. Working in a more coordinate-free manner, we see that a surjective morphism $\PP^n\rightarrow \PP^n$ with dynamical degree $d$ is given by a surjection $\O_{\PP^n}^{\oplus n+1}\twoheadrightarrow \O_{\PP^n}(d)$. The explicit description of morphisms is essential in computations and the reason why the dynamics of morphisms of projective spaces are much better understood. We generalize this approach to vector bundles by taking into account two additional pieces of information not present for $\PP^n$.
\begin{enumerate}
    \item In the classical situation, $\PP^n=\PP(V)$ where $V$ is a $n+1$ dimensional vector space, and so is a trivial vector bundle on a point. In particular, there are no transition functions in this case. We must take the transition functions of a general projective bundle into account.
    \item For our applications to the Kawaguchi--Silverman conjecture, we need to consider diagrams \eqref{eq:dagger} where the maps $f$ and $g$ are surjective. 
\end{enumerate}
The classical case of morphisms of $\PP^n$ is then obtained by taking $X = \spec \Qbar$.

For a vector bundle $\E$ on $X$ with trivializing open cover $\{U_j\}$, we use $M_{j \leftarrow i} = M_{j,i}$ to denote the transition matrix from $U_i$ to $U_j$, following the standard conventions, e.g. in \cite[Section~1.2]{LePotier1997}. The reason this convention is typically is used is because it interacts nicely with composition, so that the cocycle condition $M_{k,i} = M_{k,j} M_{j,i}$ has indices in a nice order.
\begin{proposition}
\label{prop:compatibilityconditions}
Let $X$ be a normal projective variety over $\overline{\mathbb{Q}}$, $\E$ a vector bundle on $X$ of rank $r$, and suppose that we have a commutative square \eqref{eq:dagger}. Let $\{U_j\}$ be an open cover of $X$ trivializing all of $\E$, $g^* \E$, and $\B$. Specifying a morphism $f$ of degree $d$ on the fibres of $\pi$ is equivalent to giving, for every fixed $j$, an $(r+1)$-tuple of degree $d$ polynomials
\begin{equation*}
    \theta_j \coloneqq (F_{0,j}, F_{1,j}, \ldots, F_{r,j}) \in \O_X(U_j)[t_0, t_1, \ldots, t_r]^{r+1}
\end{equation*}
such that the $F_{i,j}$ do not vanish at a common point in $\PP^r$. Moreover, they must make the associated diagram
\begin{equation}
\label{eq:compatibility}
\begin{tikzcd}
\O_{U_i\cap U_j}^{r+1} \arrow[r, "\theta_i" above, twoheadrightarrow] \arrow[d, "g^* M_{j,i}" left] & \Sym^d \O_{U_i \cap U_j}^{r+1} \arrow[d, "\beta_{i,j} \Sym^d M_{j,i}" right] \\  
\O_{U_i\cap U_j}^{r+1} \arrow[r, "\theta_j", twoheadrightarrow] & \Sym^d \O_{U_i \cap U_j}^{r+1}
\end{tikzcd}
\end{equation}
commute for all $i \neq j$, where $g^* M_{j,i}$ is the transition function for $g^* \E$, $\Sym^d M_{j,i}$ is the transition function for $\sym^d(\E)$, and $\beta_{i,j}$ is the transition function for $\B$.
\end{proposition}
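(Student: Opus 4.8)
The plan is to unwind the definition of a morphism $f$ over $g$ into local data and check that the gluing data is exactly the commutativity of \eqref{eq:compatibility}. First I would recall that a morphism $f\colon \PP(\E)\to\PP(\E)$ sitting in \eqref{eq:dagger} and of degree $d$ on the fibres is, by Proposition~\ref{prop:fibreProp}, the same as a surjection of sheaves on $\PP(\E)$
\[
\pi^*\E \twoheadrightarrow f^*\O_{\PP(\E)}(1)\otimes\pi^*\B^{-1} \;=\; \O_{\PP(\E)}(d)\otimes\pi^*(\text{something}),
\]
or, what is cleaner for bookkeeping, the universal property of $\PP(\E)$: a morphism $f$ with $\pi\circ f = g\circ\pi$ and $f^*\O_{\PP(\E)}(1) = \O_{\PP(\E)}(d)\otimes\pi^*\B$ corresponds to a surjection of vector bundles on $\PP(\E)$ of the shape $\pi^*(g^*\E)\twoheadrightarrow \O_{\PP(\E)}(d)\otimes\pi^*\B$ — equivalently, after twisting and pushing forward by $\pi$, a surjection $g^*\E\twoheadrightarrow \Sym^d\E\otimes\B$ of bundles on $X$. (Here one uses $\pi_*\O_{\PP(\E)}(d)=\Sym^d\E$ and the projection formula; the fibrewise surjectivity of the linear system is precisely the no-common-zero condition.)

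Next I would localize. Over a trivializing chart $U_j$, both $g^*\E$ and $\Sym^d\E\otimes\B$ are free, of ranks $r+1$ and $\binom{d+r}{r}$ respectively, so a surjection $g^*\E|_{U_j}\twoheadrightarrow(\Sym^d\E\otimes\B)|_{U_j}$ is given by a matrix over $\O_X(U_j)$, i.e. by $r+1$ homogeneous degree-$d$ forms $F_{i,j}\in\O_X(U_j)[t_0,\dots,t_r]$ (thinking of $\Sym^d$ of the standard basis as the monomial basis), and surjectivity as a map of sheaves at a point $x\in U_j$ amounts to these forms having no common zero in the fibre $\PP^r_x$. Running $x$ over $U_j$ gives the stated no-common-zero-in-$\PP^r$ condition. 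This establishes the local description; the only thing left is to identify the compatibility on overlaps.

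On $U_i\cap U_j$ the two trivializations of $g^*\E$ differ by the transition matrix $g^*M_{j,i}$ and the two trivializations of $\Sym^d\E\otimes\B$ differ by $\beta_{i,j}\Sym^d M_{j,i}$; a globally defined sheaf map is exactly a collection of local matrices $\theta_j$ that intertwine these two cocycles, which is the square \eqref{eq:compatibility}. Conversely, given the $\theta_j$ satisfying \eqref{eq:compatibility} one glues them to a global surjection $g^*\E\twoheadrightarrow\Sym^d\E\otimes\B$, and the no-common-zero condition guarantees the resulting rational map $\PP(\E)\dashrightarrow\PP(\E)$ is in fact a morphism; tracing through the universal property shows it lies in \eqref{eq:dagger} and has degree $d$ on fibres. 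The main obstacle — really the only subtle point — is getting the bundle identity $f^*\O_{\PP(\E)}(1)\cong\O_{\PP(\E)}(d)\otimes\pi^*\B$ and the direction of all the transition matrices exactly right, so that the twist by $\beta_{i,j}$ lands on the correct side and the monomial basis convention for $\Sym^d M_{j,i}$ matches the polynomial substitution $t\mapsto M_{j,i}t$; this is pure bookkeeping but is where sign/inverse errors hide, so I would state the conventions for $M_{j,i}$, $\Sym^d$, and $\pi_*\O(d)$ explicitly and then verify the cocycle compatibility $M_{k,i}=M_{k,j}M_{j,i}$ propagates correctly through $\Sym^d$ and the twist.
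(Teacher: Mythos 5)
Your argument is correct and follows essentially the same route as the paper: translate the morphism into a surjection $\pi^*g^*\E\twoheadrightarrow\O_{\PP(\E)}(d)\otimes\pi^*\B$ via the universal property together with Proposition~\ref{prop:fibreProp}, push down to $X$ using $\pi_*\O_{\PP(\E)}(d)=\Sym^d\E$ and the projection formula, localize to a trivializing cover to read off the $F_{i,j}$ and the no-common-zero condition, and identify the cocycle compatibility with the commuting square \eqref{eq:compatibility}. The convention issues you flag at the end (direction of $M_{j,i}$, the action of $\Sym^d M_{j,i}$ on polynomials via $F\mapsto F\circ M_{j,i}^{\text{\sffamily T}}$) are exactly what the paper relegates to Remark~\ref{rem:compatibility}, so your bookkeeping concern is well-placed and already resolved there.
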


\begin{proof}
By the functorial properties of a projective bundle, the commutative square \eqref{eq:dagger} is equivalent to a surjective morphism of sheaves
\begin{equation*}
\begin{tikzcd}
\theta \colon \pi^*g^*\E \arrow[r, twoheadrightarrow] & f^*\O_{\PP(\E)}(1).
\end{tikzcd}
\end{equation*}
By Proposition~\ref{prop:fibreProp}, this is equivalent to a surjective endomorphism of sheaves
\begin{equation*}
\begin{tikzcd}
\theta \colon \pi^*g^*\E \arrow[r, twoheadrightarrow] & \O_{\PP(\E)}(d) \otimes \pi^*\B.
\end{tikzcd}
\end{equation*}
In other words, $\theta$ is a non-vanishing global section of 
\begin{equation*}
\Hom(\pi^*g^*\E, \O_{\PP(\E)}(d)\otimes \pi^* \B) \cong (\pi^*g^*\E)^\vee \otimes \O_{\PP(\E)}(d)\otimes \pi^* \B.
\end{equation*}
The projection formula establishes that
\begin{equation*}
H^0(\PP(\E), (\pi^*g^*\E)^\vee \otimes \O_{\PP(\E)}(d) \otimes \pi^*\B) = H^0(X,(g^*\E)^\vee \otimes \sym^d \E \otimes \B).
\end{equation*}
This gives an explicit approach to describing a surjective endomorphism. Choose an open covering $\{U_i\}$ of $X$ that trivializes all of $\E$, $g^* \E$, and $\B$. Since the restriction of sheaves is right exact, we have surjective morphisms 
\begin{equation*}
\begin{tikzcd}
\theta\vert_{U_i} \colon g^*\E\vert_{U_i} \arrow[r, twoheadrightarrow] & \bigl( \sym^d \E \otimes \B \bigr) \bigl. \bigr|_{U_i}
\end{tikzcd}
\end{equation*}
We chose $U_i$ so that each of $\E$, $g^* \E$, and $\B$ are trivial so that we obtain surjective morphisms
\begin{equation*}
\begin{tikzcd}
\theta_i \colon \O_{U_i}^{r+1} \arrow[r, twoheadrightarrow] & \Sym^d \O_{U_i}^{r+1}
\end{tikzcd}
\end{equation*}
Each $\Sym^d \O_{U_i}^{r+1}$ can be identified with $\O_X(U_i)[t_0, t_1, \ldots, t_r]$ where the variables $t_j$ are independent of the open $U_i$ (they are the global sections of $\O_{\PP(\E)}(1)$ restricted to $U_i$). The $\theta_i$ are determined by where the basis elements $\textbf{e}_k$ map to, which is a degree $d$ polynomial $F_{i,k} \in \O_X(U_i)[t_0,t_1,\ldots,t_r]$. Moreover, because the $\theta_i$ come from a global morphism on $\PP(\E)$, they cannot have a common zero. The gluing conditions tell us that on each overlap $U_i\cap U_j$ we have a diagram
\begin{equation*}
\begin{tikzcd}
\O_{U_i\cap U_j}^{r+1} \arrow[r, "\theta_i" above, twoheadrightarrow] \arrow[d, "g^* M_{j,i}" left] & \Sym^d \O_{U_i \cap U_j}^{r+1} \arrow[d, "\beta_{i,j} \Sym^d M_{j,i}" right] \\  
\O_{U_i\cap U_j}^{r+1} \arrow[r, "\theta_j", twoheadrightarrow] & \Sym^d \O_{U_i \cap U_j}^{r+1}
\end{tikzcd}
\end{equation*}
where $M_{j,i}$, $g^* M_{j,i}$, and $\beta_{i,j}$ are the transition functions for $\E$, $g^* \E$, and $\B$ respectively on the open covering $\{U_i\}$. Conversely, any collection of surjections $\theta_i$ that satisfy the above compatibility condition glue to a surjection of sheaves as above. 
\end{proof}

\begin{remark}
\label{rem:compatibility}
After identifying $\Sym^d \O_{U_i}^{r+1}$ with $\O_X(U_i)[t_0, t_1, \ldots, t_r]_d$, a degree $d$ polynomial $F$ in $\O_X(U_i)[t_0, t_1, \ldots, t_r]_d$ can be interpreted as a function on the dual bundle $\E^\vee|_{U_i}$. With this interpretation, the transition function of $\Sym^d \E$ from $U_i$ to $U_j$ is the pullback of the transition function of $\E^\vee$ from $U_j$ to $U_i$. The order of the open sets reverses because dualizing reverses the direction of arrows. This gives $\bigl( \sym^d M_{j,i} \bigr) (F) = F \circ M_{j,i}^\text{\sffamily T}$, because the transition function for $\E^\vee$ from $U_j$ to $U_i$ is $M_{j,i}^\text{\sffamily T}$.
\end{remark}

The identity is a surjective endomorphism, which we may verify using the proposition.

\begin{example}
Let $X$ be a normal projective variety over $\Qbar$ and $\E$ a vector bundle on $X$ of rank $r$. Suppose $g$ is the identity, so that $\B$ is trivial and $g^* \E = \E$. Let $\{U_i\}$ be an open cover trivializing $\E$ and, for each $i$, set $\theta_i \coloneqq (t_0, t_1, \ldots, t_r)$. Let us verify that this specifies a surjective endomorphism. None of these vanish at a common point in $\PP^r$ and the commutativity of the diagram in Proposition~\ref{prop:compatibilityconditions} gives us the equality of matrices
\begin{equation*}
    \begin{bmatrix} t_0 & t_1 & \cdots & t_r \end{bmatrix} M_{j,i} = \begin{bmatrix} \bigl( \sym^1 M_{j,i} \bigr) (t_0) & \bigl( \sym^1 M_{j,i} \bigr) (t_1) & \cdots & \bigl( \sym^1 M_{j,i} \bigr) (t_r) \end{bmatrix},
\end{equation*}
which is true by Remark~\ref{rem:compatibility}. Hence this defines a surjective endomorphism.
\end{example}

\section{Bundles on Elliptic Curves}
\label{sec:bundleprelim}

In this section, we explore the properties of bundles on elliptic curves and produce explicit transition functions for these bundles. Let $C$ be a smooth elliptic curve over $\Qbar$, and let $\E$ be any degree zero vector bundle on $C$. We recall some basic facts about bundles on $C$ established in \cite{atiyah1957bundles}.

\begin{theorem}\label{rmk:AtiyahClass}
\begin{enumerate}
    \item[(a)]  For each positive integer $r$, there is a unique indecomposable vector bundle on $C$ having degree zero, rank $r$, and a nonzero global section. We denote this vector bundle by $\F_r$ and refer to it as the Atiyah bundle of rank $r$. Furthermore, we have $h^0(C, \F_r)=1$; see \cite[Theorem~5]{atiyah1957bundles}. 
    
    \item[(b)] Every indecomposable degree zero vector bundle of rank $r$ is of the form $\F_r\otimes \L$ for a unique degree zero line bundle $\L$; see \cite[Theorem~5]{atiyah1957bundles}.
    
    \item[(c)] We have $\F_r\otimes \F_s=\F_{r+s-1} \oplus \F_{r+s-3} \oplus \F_{r+s-5} \oplus \cdots \oplus \F_{|r-s|+1}$; see \cite[Theorem~8]{atiyah1957bundles}.
    
    \item[(d)] We have $\det \F_r=\O_C$; see \cite[Theorem~5]{atiyah1957bundles}.
    
    \item[(e)] Let $\L$ be a line bundle of degree $0$. The vector bundle $\F_r\otimes \L$ has a global section if and only if $\L=\O_C$; see \cite[Lemma~17]{atiyah1957bundles}.  
    
    \item[(f)] The Atiyah bundle $\F_r$ is self dual; see \cite[Corollary 1]{atiyah1957bundles}.
    
    \item[(h)] We have $\F_r=\sym ^{r-1}\F_2$; see \cite[Theorem~9]{atiyah1957bundles}.
\end{enumerate}
\end{theorem}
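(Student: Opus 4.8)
The plan is to follow Atiyah's original development: construct the bundles $\F_r$ recursively by extensions and extract every listed property from the resulting short exact sequences, using only two facts about an elliptic curve $C$ — that $h^0(C,\O_C)=h^1(C,\O_C)=1$, and that $h^0(C,\L)=0$ for every nontrivial $\L\in\Pic^0(C)$ — together with Riemann--Roch ($\chi(\E)=\deg\E$ on $C$) and Serre duality ($K_C=\O_C$).

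Begin with $\F_1=\O_C$ and build $\F_{r+1}$ inductively. If $h^0(C,\F_r)=1$, then Serre duality gives $\operatorname{Ext}^1(\F_r,\O_C)\cong H^1(C,\F_r^\vee)\cong H^0(C,\F_r)^\vee$, which is one-dimensional, so there is a non-split extension $0\to\O_C\to\F_{r+1}\to\F_r\to 0$, unique up to isomorphism of its total space (this uniqueness is a point requiring care); take it as the definition of $\F_{r+1}$. Part (d) is then immediate from multiplicativity of $\det$. Taking cohomology of this sequence yields $0\to H^0(\O_C)\to H^0(\F_{r+1})\to H^0(\F_r)\xrightarrow{\ \delta\ } H^1(\O_C)$; all three flanking spaces are one-dimensional, so $h^0(\F_{r+1})=1$ as soon as $\delta\neq 0$, and $\delta$ is cup product with the extension class, which is nonzero precisely because the extension is non-split (under the identification $\operatorname{Ext}^1(\F_r,\O_C)\cong H^0(\F_r)^\vee$ this is the evaluation pairing on a one-dimensional space). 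This gives $h^0(\F_r)=1$ in (a). For the uniqueness in (a) I would argue by induction on the rank: any indecomposable degree-zero rank-$r$ bundle $\E$ with a nonzero section fits in $0\to\O_C\to\E\to\E'\to 0$ with $\E'$ of degree $0$ and rank $r-1$ (the section is nondegenerate because a degree-zero line subsheaf of $\E$ containing $\O_C$ must equal $\O_C$), and indecomposability of $\E$ forces $\E'$ to be indecomposable with the extension non-split, so $\E'\cong\F_{r-1}$ and $\E\cong\F_r$. Part (e) then drops out along the way: twisting the defining sequence by a nontrivial $\L\in\Pic^0(C)$ and using $h^0(\L)=0$ gives $H^0(\F_r\otimes\L)\hookrightarrow H^0(\F_{r-1}\otimes\L)$, and downward induction terminates at $H^0(\L)=0$; the converse is (a).

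I would then handle (h), (c), (f), and (b) in that order, since they build on (a). For (h): $\sym^{r-1}\F_2$ has rank $r$, degree $0$ (as $\det\F_2=\O_C$), and a nonzero section (the symmetric power of one for $\F_2$); by (a) it then suffices to prove it indecomposable, which is the delicate point — one shows its endomorphism algebra is local, generated by the nilpotent endomorphism induced from the unique (up to scalar) nilpotent $N\in\End(\F_2)$ with $N^2=0$. Granting (h), the Clebsch--Gordan-type identity (c) follows either by induction on $s$ using $0\to\F_{s-1}\to\F_s\to\O_C\to 0$, or by filtering $\sym^{r-1}\F_2\otimes\sym^{s-1}\F_2$, recognizing each graded piece as a symmetric power of $\F_2$ twisted by a power of $\det\F_2=\O_C$, and matching ranks against the $\mathfrak{sl}_2$-decomposition $\sym^{r-1}\otimes\sym^{s-1}=\bigoplus_k\sym^{r+s-2-2k}$. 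For (f): dualizing the defining sequence of $\F_2$ shows $\F_2^\vee$ is indecomposable of rank $2$, degree $0$, with a section, so $\F_2^\vee\cong\F_2$ by (a), whence $\F_r^\vee\cong(\sym^{r-1}\F_2)^\vee\cong\sym^{r-1}(\F_2^\vee)\cong\F_r$. Finally (b): freeness of the $\Pic^0(C)$-action ($\F_r\otimes\L\cong\F_r\otimes\L'\Rightarrow\L\cong\L'$) comes from turning a nonzero map $\F_r\to\F_r\otimes\L'\L^{-1}$ into a section of $\F_r\otimes\F_r\otimes\L'\L^{-1}$ via (f), expanding by (c), and applying (e); transitivity follows because any indecomposable degree-zero rank-$r$ bundle $\E$ has $\chi(\E\otimes\L)=0$ for all $\L\in\Pic^0(C)$, so a theta-divisor/semicontinuity argument on the family $\{\E\otimes\L\}$ produces a twist with a global section, which by (a) must be isomorphic to $\F_r$.

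The main obstacle throughout is the indecomposability statements — showing the recursively built $\F_r$ (equivalently $\sym^{r-1}\F_2$) really is indecomposable, that its endomorphism algebra is local, and that no other indecomposable degree-zero bundle with a section occurs — together with the uniqueness of the non-split extension defining $\F_{r+1}$. Once these are in hand, everything else is bookkeeping with the exact sequences and the two cohomological inputs above.
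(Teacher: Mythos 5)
The paper does not prove this theorem at all: each part is simply recalled from Atiyah's 1957 paper with a pinpoint citation, and nothing in the surrounding text depends on the internal details of Atiyah's arguments. So there is no paper proof to compare against; what you have written is a reconstruction of Atiyah's development, and it should be assessed on its own terms.

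As a sketch it is essentially on the right track and follows the same recursive-extension architecture that Atiyah uses, and you are right to flag indecomposability of the recursively built $\F_r$, locality of $\End(\F_r)$, and the uniqueness of the non-split extension as the real work. Two further points deserve to be promoted from implicit assumptions to stated gaps. First, in the uniqueness step of (a) you write that ``a degree-zero line subsheaf of $\E$ containing $\O_C$ must equal $\O_C$,'' but you have not justified that the saturation of the image of $\O_C\to\E$ has degree zero. A priori a rank-$r$ degree-zero bundle on an elliptic curve could contain a line sub-bundle of positive degree; ruling this out is exactly the semistability of indecomposable degree-zero bundles (Atiyah's Lemma~11 and its neighbours), which must be established before the induction can close. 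Second, the ``theta-divisor/semicontinuity'' argument for transitivity of the $\Pic^0(C)$-action in (b) does not work as stated: the locus $\{\L\in\Pic^0(C):h^0(\E\otimes\L)>0\}$ is closed, so upper semicontinuity alone never forces it to be nonempty; one needs a positive input (Atiyah produces the required twist from the inductive extension structure and a determinant computation). Your Clebsch--Gordan route to (c) from (h) and (d), by contrast, does go through cleanly in characteristic zero, since the decomposition of $\Sym^{a}\otimes\Sym^{b}$ for $\GL_2$ is an identity of Schur functors and therefore applies verbatim to the rank-two bundle $\F_2$ once $\det\F_2\cong\O_C$ is in hand.
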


We present further properties of Atiyah bundles following the exposition in \cite{nasserden2021thesis}.

\begin{proposition}\label{prop:symprop}
Let $C$ be an elliptic curve defined over $\Qbar$. The Atiyah bundle $\F_r$ of rank $r$ on $C$ satisfies
\begin{equation*}
    \sym^d (\F_r) = \bigoplus_i \F_{r_i}
\end{equation*}
for some integers $r_i$.
\end{proposition}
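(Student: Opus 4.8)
The plan is to reduce everything to the decomposition theorems for Atiyah bundles recalled in Theorem~\ref{rmk:AtiyahClass}. The key observation is part (h): $\F_r = \sym^{r-1}\F_2$, so $\sym^d(\F_r) = \sym^d(\sym^{r-1}\F_2)$, which is a direct summand (in fact a plethysm) of a tensor power $\F_2^{\otimes d(r-1)}$. Thus it suffices to show that every tensor power $\F_2^{\otimes m}$ decomposes as a direct sum of Atiyah bundles $\F_{r_i}$, since then any direct summand of such a bundle is again a direct sum of $\F_{r_i}$'s by Krull--Schmidt (the category of vector bundles on a smooth projective curve has unique decomposition into indecomposables, so a summand of a sum of indecomposables is a sub-sum up to isomorphism).

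First I would prove the claim for tensor powers $\F_2^{\otimes m}$ by induction on $m$ using part (c) with $s = 2$: we have $\F_r \otimes \F_2 = \F_{r+1} \oplus \F_{r-1}$ for $r \geq 2$, and $\F_1 \otimes \F_2 = \F_2$. Hence if $\F_2^{\otimes (m-1)} = \bigoplus_j \F_{r_j}$, then tensoring with $\F_2$ and distributing gives $\F_2^{\otimes m} = \bigoplus_j (\F_{r_j} \otimes \F_2) = \bigoplus_j \F_{r_j + 1} \oplus \F_{r_j - 1}$ (with the convention that the $\F_{r_j-1}$ term is dropped when $r_j = 1$), which is again a direct sum of Atiyah bundles. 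The base case $m = 1$ is trivial. So $\F_2^{\otimes m}$ is a direct sum of Atiyah bundles for all $m \geq 1$ (and for $m = 0$ we get $\O_C = \F_1$).

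Next, since $\F_r = \sym^{r-1}\F_2$ is a direct summand of $\F_2^{\otimes (r-1)}$, the bundle $\sym^d(\F_r)$ is a direct summand of $(\F_2^{\otimes(r-1)})^{\otimes d} = \F_2^{\otimes d(r-1)}$. By the previous paragraph the latter is $\bigoplus_k \F_{s_k}$ for some integers $s_k$. Now invoke Krull--Schmidt for vector bundles on $C$: the indecomposable summands of $\sym^d(\F_r)$ must, up to isomorphism, form a sub-collection of the $\F_{s_k}$, hence $\sym^d(\F_r) = \bigoplus_i \F_{r_i}$ for some integers $r_i$, as claimed. Note that every $\F_{s_k}$ here is already an Atiyah bundle (not merely an indecomposable degree-zero bundle), so no twist by a degree-zero line bundle appears; this is consistent with the fact that $\sym^d(\F_r)$ has $\O_C$ as a summand (take the image of the $d$-th power of the section spanning $H^0(\F_r)$, or equivalently use $\det$-considerations), and by part (e) any summand $\F_s \otimes \L$ forces $\L = \O_C$ once one section is present in the right sub-bundle — but in fact this subtlety is unnecessary since the argument above already produces honest $\F_{s_k}$'s.

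The main obstacle is making the Krull--Schmidt step airtight: one needs that the category of vector bundles on a smooth projective curve over $\Qbar$ satisfies the Krull--Schmidt property (finite-length, idempotent-complete with local endomorphism rings of indecomposables), so that a direct summand of $\bigoplus_k \F_{s_k}$ is isomorphic to $\bigoplus_{k \in S} \F_{s_k}$ for some subset $S$. This is classical (Atiyah establishes it in \cite{atiyah1957bundles}), so it is more a matter of citing it cleanly than of doing real work. Everything else is a short induction plus the plethysm-is-a-summand-of-a-tensor-power observation.
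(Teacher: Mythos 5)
Your proof is correct, and it takes a genuinely different (and arguably cleaner) route than the paper's. The paper's argument uses the canonical \emph{surjection} $\F_r^{\otimes d} \twoheadrightarrow \sym^d(\F_r)$ together with a semistability/degree argument (both source and target are semistable of degree zero) to invoke Atiyah's classification and get $\sym^d\F_r = \bigoplus_j \F_{r_j}\otimes\L_j$, and then a Hom-space computation — using Theorem~\ref{rmk:AtiyahClass}~(e) and (f) — to show that surjectivity forces every $\L_j$ to be trivial. You instead replace the quotient map by the characteristic-zero \emph{splitting}: the symmetrizer realizes $\sym^d(\F_r)$ as a direct summand of a tensor power that Theorem~\ref{rmk:AtiyahClass}~(c) already writes as $\bigoplus_k \F_{s_k}$, and then the Krull--Schmidt property of vector bundles on a smooth projective curve (which is in Atiyah's paper) immediately pins down the indecomposable summands of $\sym^d(\F_r)$ as a sub-collection of the $\F_{s_k}$. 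That buys you a shorter argument with no need for semistability, self-duality, or the vanishing of Hom groups. One small remark: your detour through part (h) and the induction on $\F_2^{\otimes m}$ is not needed — you can apply the symmetrizer directly to $\F_r^{\otimes d}$ and cite Theorem~\ref{rmk:AtiyahClass}~(c) iteratively, exactly as the paper does for $\F_r^{\otimes d}$, to get $\F_r^{\otimes d}=\bigoplus_i\F_{l_i}$; Krull--Schmidt then finishes in one line. Either way the proof is sound; both approaches rely on characteristic zero (yours for the idempotent splitting of the tensor power, the paper's for the identification $(\sym^d\F_r)^\vee\cong\sym^d\F_r^\vee$).
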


\begin{proof}
We have a canonical surjection 
\begin{equation*}
    \F_r^{\otimes d} \twoheadrightarrow \sym^d (\F_r) \rightarrow 0.
\end{equation*} 
By Theorem~\ref{rmk:AtiyahClass} (c), we have that $\F_r^{\otimes d} = \bigoplus_i \F_{l_i}$ for some integers $l_i$. The Atiyah bundle is semistable \cite[fact on p.~3]{tu1993semistable}, which implies that $\deg \bigl( \sym^d(\F_r) \bigr) \geq 0$. Taking duals, we obtain the exact sequence
\begin{equation*}
    0 \rightarrow (\sym^d \F_r)^* \rightarrow (\F_r^{\otimes d})^*.
\end{equation*}
Since we are in characteristic 0 and $\F_r^* = \F_r$ we have that 
\begin{equation*}
(\sym^d \F_r)^* = \sym^d \F_r^* = \sym^d \F_r
\end{equation*}
and $(\F_r^{\otimes d})^* = \F_r^{\otimes d}$. Thus $\sym^d \F_r$ is a sub-bundle of $\F_r^{\otimes d}$. Moreover, $\F_r^{\otimes d}$ is semistable of degree zero (being the tensor product of semistable vector bundles) and so we obtain $\deg \sym^d \F_r \leq 0$. Hence, we get $\deg \sym^d \F_r = 0$, so $\sym^d \F_r$ is semistable of degree zero. It follows that each of its summands is also semistable of degree zero and so by Theorem~\ref{rmk:AtiyahClass} (b) we have that 
\begin{equation*}
    \sym^d \F_r = \bigoplus_{j} \F_{r_j} \otimes \L_j
\end{equation*}
where each $\L_j$ is some degree zero line bundle. Whenever $\F$ and $\G$ are vector bundles, we get $\mathcal{H}om(\F,\G) \cong \F^\vee \otimes \G$ and $H^0(C,\mathcal{H}om(\F,\G)) = \Hom(\F,\G)$. Applying this in our situation gives
\begin{align}
\label{eq:sympropeq}
    \Hom(\F_r^{\otimes d},\sym^d\F_r)=\Hom \biggl( \bigoplus_{i}\F_{l_i},\bigoplus_j \F_{r_j}\otimes \L_j \biggr) = \bigoplus_{i,j}\Hom(\F_{l_i},\F_{r_j}\otimes \L_j).
\end{align}
Now we have that $\Hom(\F_{l_i},\F_{r_j}\otimes \L_j)=H^0(C,\F_{l_i}^*\otimes \F_{r_j}\otimes \L_j)=H^0(C,\F_{l_i}\otimes \F_{r_j}\otimes \L_j)$ as the Atiyah bundle is self dual (Theorem~\ref{rmk:AtiyahClass} (f)). Suppose that $\L_j\neq \O_C$ for some $j$. As $\L_j$ is of degree $0$ we have that $H^0(C, \F_{l_i}\otimes \F_{r_j}\otimes \L_j)=0$ by Theorem~\ref{rmk:AtiyahClass} (e). Every surjective map $\psi\colon \F_r^{\otimes d}\twoheadrightarrow \sym^d(\F_r)$ arises as an element of 
\begin{equation*}
\Hom(\F_r^{\otimes d},\sym^d(\F_r))=H^0(C,\mathcal{H}om(\F_r^{\otimes d},\sym^d(\F_r)),
\end{equation*}
so that we have a decomposition $\psi=\bigoplus \psi_{i,j}$ with $\psi_{i,j}\colon \F_{l_i}\rightarrow \F_{r_j}\otimes \L_j$. If $\L_{j_0}\neq \O_C$ for some fixed $j_0$ then $\psi_{ij_0}=0$ for all $i$ by \eqref{eq:sympropeq}. This contradicts the assumption that $\psi$ is surjective. This is because locally the image of $\psi=\bigoplus\psi_{i,j}$ must generate $\F_{r_{j_0}}\otimes \L_{j_0}$. If each $\psi_{ij_0}=0$ then this image is always zero locally, while $\F_{r_{j_0}}\otimes \L_{j_0}$ is nonzero locally. So we have that each $\L_j=\O_C$ and the claim follows.
\end{proof}

As a corollary we may extend the above result to direct sums of Atiyah bundles.

\begin{corollary}\label{cor:Atiyahsum}
Let $C$ be an elliptic curve defined over $\Qbar$ and let $\F_r$ be the rank $r$ Atiyah bundle on $C$. Whenever $\E=\bigoplus_{i=1}^s \F_{r_i}$, we get 
\begin{equation*}
    \sym^d(\E)=\bigoplus_{j=1}^N \F_{w_j}
\end{equation*}
for some integers $w_j$.
\end{corollary}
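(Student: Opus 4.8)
The plan is to reduce the statement to Proposition~\ref{prop:symprop} by expanding $\sym^d$ of a direct sum. First I would recall the standard decomposition of the symmetric power of a direct sum: for any vector bundles $\A$ and $\B$ one has
\begin{equation*}
	\sym^d(\A \oplus \B) = \bigoplus_{a+b=d} \sym^a(\A) \otimes \sym^b(\B),
\end{equation*}
and iterating this for $\E = \bigoplus_{i=1}^s \F_{r_i}$ gives
\begin{equation*}
	\sym^d(\E) = \bigoplus_{d_1 + \cdots + d_s = d} \sym^{d_1}(\F_{r_1}) \otimes \cdots \otimes \sym^{d_s}(\F_{r_s}).
\end{equation*}
So it suffices to show that each summand $\sym^{d_1}(\F_{r_1}) \otimes \cdots \otimes \sym^{d_s}(\F_{r_s})$ is itself a direct sum of Atiyah bundles.

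Next I would invoke Proposition~\ref{prop:symprop} to write each factor $\sym^{d_i}(\F_{r_i})$ as a direct sum of Atiyah bundles $\bigoplus_k \F_{m_{i,k}}$. Then each summand of $\sym^d(\E)$ becomes a tensor product of finitely many direct sums of Atiyah bundles. Distributing the tensor product over the direct sums reduces us to showing that a tensor product $\F_{m_1} \otimes \cdots \otimes \F_{m_t}$ of Atiyah bundles is a direct sum of Atiyah bundles; this follows by induction on $t$ using the Clebsch--Gordan-type formula in Theorem~\ref{rmk:AtiyahClass}~(c), namely $\F_r \otimes \F_s = \F_{r+s-1} \oplus \F_{r+s-3} \oplus \cdots \oplus \F_{|r-s|+1}$. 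Combining all of these direct-sum decompositions and collecting terms yields $\sym^d(\E) = \bigoplus_j \F_{w_j}$ for some integers $w_j$, as claimed.

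I do not anticipate a serious obstacle here: the argument is essentially bookkeeping, distributing tensor products and symmetric powers over direct sums and applying two results already in hand (Proposition~\ref{prop:symprop} and Theorem~\ref{rmk:AtiyahClass}~(c)). The only point requiring minor care is the standard identity for $\sym^d$ of a direct sum, which holds in characteristic $0$ (where symmetric powers behave well), and the observation that tensor products of semistable degree-zero bundles remain semistable of degree zero, so that no twisting line bundles appear — but this last point is already subsumed by Proposition~\ref{prop:symprop} and Theorem~\ref{rmk:AtiyahClass}~(b)--(c). Thus the proof is a short concatenation of these facts.
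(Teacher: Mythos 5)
Your argument is correct and follows the same route as the paper: decompose $\sym^d$ of the direct sum into a direct sum of tensor products of symmetric powers, apply Proposition~\ref{prop:symprop} to each factor, and then use Theorem~\ref{rmk:AtiyahClass}~(c) to decompose the resulting tensor products of Atiyah bundles. The paper's proof is exactly this concatenation, so no further comment is needed.
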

\begin{proof}
We have that
\begin{equation*}
\sym^d\biggl(\bigoplus_{i=1}^s\F_{r_i}\biggr)\cong \bigoplus_{t_1+\cdots+t_s=d}\bigotimes_{j=1}^s \sym^{t_j}(\F_{r_j})
\end{equation*}
where $t_1, t_2, \ldots, t_s$ are nonnegative integers. Proposition~\ref{prop:symprop} gives $\sym^{t_j}(\F_{r_j})=\bigoplus_{k} \F_{g_{j,k}}$ for some integers $g_{j,k}$. It follows that we have
\begin{equation*}
\bigotimes_{j=1}^s \sym^{t_j}(\F_{r_j})=\bigotimes_{j=1}^s\biggl(\bigoplus_{k}\F_{g_{j,k}} \biggr).
\end{equation*}
By Theorem~\ref{rmk:AtiyahClass} (c), the tensor product of two Atiyah bundles is a direct sum of Atiyah bundles, and the result follows.
\end{proof}

\begin{lemma}\label{lemma:Bistrivial}
For any commutative square
\begin{equation}
\label{eq:cdagger}
\begin{tikzcd}
\PP(\E) \arrow[r, "f"] \arrow[d, "\pi" left] & \PP(\E) \arrow[d, "\pi" right] \\
C \arrow[r, "g"] & C
\end{tikzcd}
\tag{$\ddagger$}
\end{equation}
where $\E=\F_{r+1}$, there is an integer $d$ such that $f^*\O_{\PP(\F_{r+1})}(1) \cong \O_{\PP(F_{r+1})}(d)$.
\end{lemma}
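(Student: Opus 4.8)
The plan is to use Proposition~\ref{prop:fibreProp}, which already gives $f^*\O_{\PP(\F_{r+1})}(1) \cong \O_{\PP(\F_{r+1})}(d) \otimes \pi^*\B$ for $d = \lambda_1(f\vert_\pi)$ and some line bundle $\B$ on $C$; the content of the lemma is precisely that $\B \cong \O_C$. By Proposition~\ref{prop:compatibilityconditions}, the existence of such an $f$ is equivalent to a non-vanishing global section of $\Hom(\pi^*g^*\F_{r+1}, \O_{\PP(\F_{r+1})}(d)\otimes\pi^*\B)$, and the projection formula identifies the relevant cohomology group with
\begin{equation*}
H^0\bigl(C, (g^*\F_{r+1})^\vee \otimes \sym^d\F_{r+1} \otimes \B\bigr).
\end{equation*}
So the existence of $f$ forces this group to be nonzero. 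I would then analyze this bundle summand-by-summand.

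First I would simplify $g^*\F_{r+1}$. Since $g\colon C\to C$ is a surjective endomorphism of an elliptic curve, it is (a translate of) an isogeny, hence $g^*$ preserves degree zero and takes indecomposable semistable bundles to semistable bundles of the same degree; by Atiyah's classification (Theorem~\ref{rmk:AtiyahClass}(b)) one gets $g^*\F_{r+1} \cong \F_{r+1}\otimes\M$ for some degree zero line bundle $\M$ (in fact $\M = g^*\O_C = \O_C$ if $g$ is an isogeny fixing the origin, but even in general $\M$ is degree zero, which is all that matters). Using self-duality of the Atiyah bundle (Theorem~\ref{rmk:AtiyahClass}(f)), $(g^*\F_{r+1})^\vee \cong \F_{r+1}\otimes\M^{-1}$. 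Next, Proposition~\ref{prop:symprop} gives $\sym^d\F_{r+1} = \bigoplus_j \F_{w_j}$, and then Theorem~\ref{rmk:AtiyahClass}(c) expands each $\F_{r+1}\otimes\F_{w_j}$ as a direct sum of Atiyah bundles $\F_{m}$. Putting these together, $(g^*\F_{r+1})^\vee \otimes \sym^d\F_{r+1}\otimes\B$ is a direct sum of bundles of the form $\F_m \otimes \M^{-1}\otimes\B$, each of degree zero.

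By Theorem~\ref{rmk:AtiyahClass}(e), a summand $\F_m\otimes(\M^{-1}\otimes\B)$ has a nonzero global section if and only if $\M^{-1}\otimes\B \cong \O_C$, i.e. $\B \cong \M$. Hence non-vanishing of $H^0$ of the whole sum forces $\B\cong\M$, and the line bundle identity in Proposition~\ref{prop:fibreProp} becomes $f^*\O_{\PP(\F_{r+1})}(1)\cong\O_{\PP(\F_{r+1})}(d)\otimes\pi^*\M$. The remaining point is to replace $\M$ by $\O_C$. One clean way: since $\pi^*\M$ only depends on $\M$ up to the ambiguity in presenting $\F_{r+1}$, and $g^*\F_{r+1}\cong\F_{r+1}\otimes\M$ with $\M$ arising from $g$, one checks that after the canonical identification the twist disappears — concretely, $\O_{\PP(\E\otimes\M)}(1)\cong\O_{\PP(\E)}(1)\otimes\pi^*\M$, so $g^*\F_{r+1}\cong\F_{r+1}\otimes\M$ already absorbs the twist into $\O_{\PP(\F_{r+1})}(1)$ under the isomorphism $\PP(g^*\F_{r+1})\cong\PP(\F_{r+1})$. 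Tracking this identification carefully shows $\B$ can be taken trivial, giving $f^*\O_{\PP(\F_{r+1})}(1)\cong\O_{\PP(\F_{r+1})}(d)$.

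The main obstacle is the bookkeeping in this last paragraph: one must be careful about which isomorphism $\PP(g^*\F_{r+1})\cong\PP(\F_{r+1})$ is used and how $\O(1)$ transforms, since $\B$ is only well-defined once that identification is fixed. Everything else — the decomposition of $g^*\F_{r+1}$, of $\sym^d\F_{r+1}$, and the vanishing criterion — is a direct application of the Atiyah classification results quoted above, so the real work is ensuring the twist is genuinely trivial rather than merely degree zero.
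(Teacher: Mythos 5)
Your strategy --- reducing to the nonvanishing of a global section of $(g^*\F_{r+1})^\vee\otimes\sym^d\F_{r+1}\otimes\B$ via Proposition~\ref{prop:compatibilityconditions} and then decomposing this bundle into Atiyah summands --- is a genuinely different route from the paper's. The paper simply notes that $\O_{\PP(\F_{r+1})}(1)$ already has a nonzero global section (because $h^0(C,\F_{r+1})=1$), observes that $f^*$ preserves nonzero sections since $f$ is surjective, and applies the projection formula to get $H^0(C,\sym^d\F_{r+1}\otimes\B)\neq 0$; Proposition~\ref{prop:symprop} and Theorem~\ref{rmk:AtiyahClass}(e) then force $\B=\O_C$. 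That argument never needs $g^*\F_{r+1}$ and is considerably shorter.

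Your last paragraph, however, contains a genuine gap. You correctly arrive at $\B\cong\M$ where $g^*\F_{r+1}\cong\F_{r+1}\otimes\M$, but then try to ``absorb'' the twist $\M$ by appealing to an ambiguity in presenting $\F_{r+1}$. There is no such ambiguity here: once $\E=\F_{r+1}$ and hence $\O_{\PP(\E)}(1)$ are fixed (as they are in the statement of the lemma), the line bundle $\B$ is uniquely determined, since $\pi^*\B = f^*\O_{\PP(\E)}(1)\otimes\O_{\PP(\E)}(-d)$ and $\pi^*\colon\Pic(C)\to\Pic(\PP(\E))$ is injective. Retwisting $\E$ by $\M$ would change $\O(1)$ and hence change which $\B$ you are asked to prove trivial; it does not give you the conclusion for the fixed presentation. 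The fix is simple and does not require that bookkeeping at all: $\M$ is in fact always trivial. Writing $g=\tau_c\circ\psi$ with $\psi$ an isogeny, $\tau_c^*\F_{r+1}\cong\F_{r+1}$ by uniqueness in Theorem~\ref{rmk:AtiyahClass}(a)--(b), and $\psi^*\F_{r+1}\cong\F_{r+1}$ by \cite[Corollary 2.1]{oda1971bundles}; this is exactly Proposition~\ref{prop:Atiyahpreserved}, which is proved independently. Substituting $\M=\O_C$ closes your argument. One smaller omission: applying Theorem~\ref{rmk:AtiyahClass}(e) requires $\M^{-1}\otimes\B$ to have degree zero, and you never verify $\deg\B=0$; the paper obtains this from the fact that the nef cone of $\PP(\F_{r+1})$ is spanned by $\O_{\PP(\F_{r+1})}(1)$ and $\pi^*H$, forcing $f^*\O_{\PP(\F_{r+1})}(1)$ to lie numerically on the $\O_{\PP(\F_{r+1})}(1)$ ray.
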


\begin{proof}
The vector bundle $\F_{r+1}$ is a nef and non-ample vector bundle, because it is an iterated extension of nef and non-ample vector bundles; see \cite[Theorem 6.2.12]{lazarsfeld2004positivity}. Since $C$ has Picard number one, $\PP (\F_{r+1})$ has Picard number two and its nef cone is generated by $\O_{\PP (\F_{r+1})}(1)$ and $\pi^* H$ where $H$ is an ample line bundle on $C$. By Proposition~\ref{prop:fibreProp}, we have $f^*\O_{\PP(\F_{r+1})}(1)\equiv_{\textnormal{lin}} \O_{\PP(\F_{r+1})}(d)\otimes \pi^* \B$ for some line bundle $\B$ on $C$. Since $\F_{r+1}$ is of degree zero, so is $\pi^* \B$. Moreover, because $\O_{\PP(\F_{r+1})}(1)$ has a nonzero global section, we get that $f^*\O_{\PP(F_{r+1})}$ has a nonzero global section. In other words $H^0(\PP(\F_{r+1}),  \O_{\PP(\F_{r+1})}(d)\otimes \pi^* \B)=H^0(C, \sym^d(\F_{r+1})\otimes \B)$ is non-empty. However, $\sym^d\F_{r+1}$ is a direct sum of vector bundles of the form $\F_{s}$ by Proposition~\ref{prop:symprop}, and hence
\begin{equation*}
    H^0(C, \sym^d(\F_{r+1})\otimes \B)=\bigoplus_{i=1}^tH^0(C, \F_{s_i}\otimes \B) 
\end{equation*}
is nonzero. This occurs only when $\B=\O_C$ as claimed. 
\end{proof}

\begin{proposition}
\label{prop:Atiyahpreserved}
Let $C$ be an elliptic curve defined over $\Qbar$. For any surjective endomorphism $f \colon C \rightarrow C$ and Atiyah bundle $\F_r$, we have $f^*\F_r\cong \F_r$. 
\end{proposition}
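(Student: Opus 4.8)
The plan is to use the characterization of the Atiyah bundle $\F_r$ given in Theorem~\ref{rmk:AtiyahClass}: among degree zero rank $r$ indecomposable bundles on $C$, it is the unique one admitting a nonzero global section. So the strategy is to show that $f^*\F_r$ is again indecomposable of degree zero and rank $r$, and that it has a nonzero global section; uniqueness then forces $f^*\F_r \cong \F_r$. First I would record the easy invariants: pullback along the finite surjective morphism $f$ preserves rank, and since $f^*$ acts on $\Pic^0(C)$ (indeed on all of $\operatorname{NS}$, which here is just $\ZZ$ via degree, scaled by $\deg f$) we get $\deg f^*\F_r = \deg f \cdot \deg \F_r = 0$. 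The existence of a nonzero global section is also immediate: $\F_r$ has a nonzero section $s \in H^0(C, \F_r)$, and $f^* s$ is a nonzero section of $f^*\F_r$ since $f$ is dominant (pullback of sheaves along a surjection is injective on sections of a locally free sheaf, or more concretely $f^*s$ is nonzero on the generic point).

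The crux is showing $f^*\F_r$ is indecomposable. Here is where I would invoke part (h) of Theorem~\ref{rmk:AtiyahClass}, namely $\F_r = \sym^{r-1}\F_2$: since pullback commutes with symmetric powers, $f^*\F_r = \sym^{r-1}(f^*\F_2)$, so it suffices to handle the rank two case and show $f^*\F_2 \cong \F_2$, then apply $\sym^{r-1}$. The bundle $\F_2$ is the unique nonsplit extension $0 \to \O_C \to \F_2 \to \O_C \to 0$ (up to scaling the extension class, which lives in $H^1(C,\O_C) \cong \Qbar$, one-dimensional). Pulling this sequence back along $f$ gives $0 \to \O_C \to f^*\F_2 \to \O_C \to 0$, so $f^*\F_2$ is an extension of $\O_C$ by $\O_C$; it is either $\O_C \oplus \O_C$ or $\F_2$, and it is $\F_2$ precisely when the pulled-back extension class $f^*\colon H^1(C,\O_C) \to H^1(C,\O_C)$ is nonzero on the class of $\F_2$. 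So I need: $f^*$ is injective (equivalently nonzero) on $H^1(C,\O_C)$. This follows because $f$ is a finite surjective morphism of smooth curves, hence flat, so $f_*\O_C$ contains $\O_C$ as a direct summand (in characteristic zero, via the trace map $\frac{1}{\deg f}\Tr$), and thus $H^1(C,\O_C) \hookrightarrow H^1(C, f_*\O_C) = H^1(C, f^*\O_C)$... more cleanly: the composite $H^1(C,\O_C) \xrightarrow{f^*} H^1(C, \O_C) \xrightarrow{f_* \circ \mathrm{tr}} H^1(C,\O_C)$ is multiplication by $\deg f \neq 0$, so $f^*$ is injective on $H^1(C,\O_C)$.

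Assembling: $f^*\F_2$ is a nonsplit self-extension of $\O_C$, hence $f^*\F_2 \cong \F_2$ by the uniqueness in Theorem~\ref{rmk:AtiyahClass}(a)–(b). Then $f^*\F_r = f^*\sym^{r-1}\F_2 = \sym^{r-1}(f^*\F_2) \cong \sym^{r-1}\F_2 = \F_r$, using Theorem~\ref{rmk:AtiyahClass}(h). Alternatively, if one prefers to avoid the extension-class computation entirely, one can argue directly with the section: $f^*\F_r$ is semistable of degree zero (pullback of semistable along a finite map between smooth curves is semistable), so it decomposes as $\bigoplus_k \F_{r_k}\otimes\L_k$ with each $\L_k \in \Pic^0(C)$ by Theorem~\ref{rmk:AtiyahClass}(b); the nonzero section $f^*s$ has a nonzero component in some $H^0(C, \F_{r_k}\otimes\L_k)$, which by Theorem~\ref{rmk:AtiyahClass}(e) forces $\L_k = \O_C$; and then a dimension/degeneration count using $h^0(C,\F_m)=1$ for all $m$ shows the whole thing must be a single $\F_r$ — but this route needs a little care to rule out, say, $\F_1 \oplus \F_{r-1}$, so I expect the extension-class argument via part (h) to be the cleaner one to write down. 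The main obstacle, such as it is, is the indecomposability step; everything else is bookkeeping about ranks, degrees, and sections.
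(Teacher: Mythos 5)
Your proof is correct, but it takes a genuinely different route from the paper's. The paper factors $f = \tau_c \circ g$ with $\tau_c$ a translation and $g$ an isogeny, observes that $\tau_c^*\F_r \cong \F_r$ directly from Atiyah's characterization (indecomposable, degree zero, nonzero section — all preserved by an automorphism), and then cites Oda (\cite[Corollary~2.1]{oda1971bundles}) for the isogeny case $g^*\F_r \cong \F_r$. Your argument instead reduces to rank two via $\F_r = \sym^{r-1}\F_2$ and shows directly that the pulled-back extension $0 \to \O_C \to f^*\F_2 \to \O_C \to 0$ is nonsplit, because $f^* \colon H^1(C,\O_C) \to H^1(C,\O_C)$ is injective (by the trace argument on the finite flat map $f$, valid in characteristic zero). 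What the paper's route buys is brevity at the cost of an external reference; what yours buys is a self-contained proof that sidesteps both the translation/isogeny decomposition and the appeal to Oda, replacing them with the one-line injectivity of $f^*$ on $H^1(C,\O_C)$. Both arguments are sound; yours is arguably cleaner to include inline, since every ingredient (pullback commuting with $\sym$, $\operatorname{Ext}^1(\O_C,\O_C) = H^1(C,\O_C)$ one-dimensional, trace splitting of $\O_C \to f_*\O_C$) is elementary. You are right to prefer the extension-class route over the "semistable $+$ section" alternative you sketch at the end, since the latter requires an extra argument to exclude decompositions like $\F_1 \oplus \F_{r-1}$, and the $\sym^{r-1}$ reduction handles this automatically.
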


\begin{proof}
Any surjective endomorphism of $C$ can be written $f=\tau_c \circ g$ where $g$ is an isogeny and $\tau_c$ is translation by some element of $C$. We have that $\tau_c^*\F_r$ is indecomposable of degree zero. It also has a nonzero section because $\F_r$ does, and the pull back is an isomorphism on sections. Hence by Theorem~\ref{rmk:AtiyahClass}, we have that $\tau_c^*\F_r\cong \F_r$. It follows by \cite[Corollary 2.1]{oda1971bundles} that we have $g^*\F_r\cong \F_r$ as $g$ is an isogeny. Since $f^*\F_r=(\tau_c\circ g)^*\cong g^*\tau_c^* \F_r$ we have the result. 
\end{proof}

% As a consequence we obtain the following.
% \begin{corollary}
% Let $C$ be an elliptic curve defined over $\overline{\QQ}$. Let $\E=\F_{r+1}$ for some positive integer $r$. Suppose that $g$ is the identity on $C$ and we have a commutative diagram \eqref{eq:cdagger} with $f$ having degree $1$ on the fibres of $\pi$. For any morphism $g$ and diagram \eqref{eq:cdagger} we have that $f$ has degree 1 on the fibres of $\pi$.
% \end{corollary}
% \begin{proof}
% Since $g^* \F_{r+1}\cong \F_{r+1}$, there is a surjective endomorphism $\pi^*g^*\F_{r+1} \twoheadrightarrow \O_{\PP (\F_{r+1})}(d)$ if and only if there is a surjective endomorphism $\pi^*\F_{r+1} \twoheadrightarrow \O_{\PP (\F_{r+1})}(d)$.
% \end{proof}

Finally, we describe the transition functions of bundles on elliptic curves as in \cite{zotine2017thesis}. Fix an elliptic curve $C$ in Legendre form
\begin{align}
\label{eq:legendre}
    C \mathrel{:} Z Y^2 = X(X - Z)(X - \lambda Z),
\end{align}
for some $\lambda \in \Qbar \backslash\{0,1\}$. There are exactly three non-identity $2$-torsion points
\begin{align*}
    T_0 &\coloneqq [0\mathbin{:}0\mathbin{:}1], & T_1 &\coloneqq [1\mathbin{:}0\mathbin{:}1], & T_2 &\coloneqq [\lambda\mathbin{:}0\mathbin{:}1],
\end{align*}
and the base point is $O = [0\mathbin{:}1\mathbin{:}0]$. By Theorem~\ref{rmk:AtiyahClass} (b), any vector bundle $\E$ on $C$ is of the form $\bigoplus_{i=1}^n \F_{r_i} \otimes \L_i$ for some degree zero line bundles $\L_i$. Depending on these $\L_i$ we have an open cover $\{U, V\}$ of $C$ which trivializes $\E$, whose construction we review.

Each non-trivial $\L_i$ is isomorphic to $\O_C(P_i - O)$ for a unique $P_i \in C \backslash \{O\}$. Let $L_i$ be a linear form in $\Qbar[X, Y, Z]$ such that the line $L_i = 0$ passes through $P_i$ and avoids all of $O, T_0, T_1, T_2$, and $P_j$ for $i \neq j$, (unless $P_i = T_k$ for some $k$). This is possible because this is a finite set of points to avoid. The line $L_i = 0$ passes through two additional points $Q_i$ and $R_i$ on the curve $C$. We define
\begin{align*}
    U &\coloneqq C \mathbin{\backslash} \left(\{O\} \cup \{Q_i, R_i\} \right), \\
    V &\coloneqq C \mathbin{\backslash} \left(\{T_0, T_1, T_2\} \cup \{P_i\} \right).
\end{align*}
By construction, this is an open cover. We highlight the vital features of this cover.

\begin{lemma}
\label{lemma:trivialization}
Let $\{U,V\}$ be an open cover of $C$ such that $O \not\in U$ and the $2$-torsion points $T_0,T_1,T_2 \not\in V$.
\begin{enumerate}
    \item[(i)] The intersection of $\O_C(U)$ and $\O_C(V)$ as subrings of $\O_C(U \cap V)$ is $\Qbar$.
    \item[(ii)] There is an element $\omega \in \O_C(U \cap V)$ such that $\omega \not\in \O_C(U)$ and $\omega \not\in \O_C(V)$.
    \item[(iii)] There is no element $f \in \O_C(U)$ such that $f + \omega \in \O_C(V)$. In particular, if $f \in \O_C(U)$ and $c \in \Qbar$ satisfy $f + c\omega \in \O_C(V)$, then the number $c$ equals zero.
\end{enumerate}
\end{lemma}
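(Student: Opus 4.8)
The plan is to prove the three parts in order, arranging part (ii) so that the function $\omega$ it produces can be fed directly into part (iii).

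Part (i) I would handle formally. A rational function $\phi$ on $C$ lying in $\O_C(U)\cap\O_C(V)$ is regular on $U$ and on $V$; since regularity is local and $\{U,V\}$ covers $C$, $\phi$ is regular on all of $C$, and as $C$ is a smooth projective geometrically connected curve over $\Qbar$ we get $\phi\in\O_C(C)=\Qbar$. The reverse inclusion $\Qbar\subseteq\O_C(U)\cap\O_C(V)$ is immediate, so the intersection is exactly $\Qbar$.

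For part (ii) the first observation is that, because $\{U,V\}$ covers $C$, the hypotheses $O\notin U$ and $T_0\notin V$ force $O\in V$ and $T_0\in U$; more generally $C\setminus V\subseteq U$ and $C\setminus U\subseteq V$. Next, since $C$ has genus one, Riemann--Roch gives $h^0(C,\O_C(O+T_0))=2$ while $h^0(C,\O_C(O))=h^0(C,\O_C(T_0))=1$, so there is a non-constant $\omega\in H^0(C,\O_C(O+T_0))$. Because $\omega$ lies in neither $H^0(C,\O_C(O))$ nor $H^0(C,\O_C(T_0))$, it has a pole --- necessarily simple --- at each of $O$ and $T_0$, and is regular everywhere else. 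Hence $\omega\in\O_C(U\cap V)$ (its poles $O,T_0$ are not in $U\cap V$), $\omega\notin\O_C(U)$ (pole at $T_0\in U$), and $\omega\notin\O_C(V)$ (pole at $O\in V$). I would emphasize that $\omega$ has poles \emph{exactly} at $O$ and $T_0$, as this is precisely what part (iii) exploits.

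For part (iii), suppose toward a contradiction that $f\in\O_C(U)$ and $h\coloneqq f+\omega\in\O_C(V)$. The poles of $h$ lie in $C\setminus V\subseteq U$. For any $p\in C\setminus V$ with $p\neq T_0$ we have $p\neq O$ (since $O\in V$), so $\omega$ is regular at $p$, and $f$ is regular at $p$ because $p\in U$; thus $h$ is regular at $p$. Therefore $T_0$ is the only possible pole of $h$, and there $f$ is regular (as $T_0\in U$) while $\omega$ has a simple pole, so $h$ has a single simple pole at $T_0$ and is regular everywhere else. This makes $h$ a degree-one morphism $C\to\PP^1$, contradicting that $C$ has genus one. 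The ``in particular'' clause follows by scaling: if $c\neq0$ and $f+c\omega\in\O_C(V)$ with $f\in\O_C(U)$, then $c^{-1}f\in\O_C(U)$ and $c^{-1}f+\omega\in\O_C(V)$, which was just excluded. None of these steps is genuinely hard; the only point that needs care is the coordination between (ii) and (iii), namely choosing $\omega$ with poles at precisely the two points $O$ and $T_0$, so that the pole count in (iii) forces $h$ down to a single simple pole. (One could instead write $\omega$ down explicitly in the Legendre coordinates of \eqref{eq:legendre}, but the Riemann--Roch argument is cleaner and keeps the reuse in (iii) transparent.)
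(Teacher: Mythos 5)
Your proof is correct, and it takes a genuinely different route from the paper's. The paper writes down the explicit function $\omega = X^2/(YZ)$ in Legendre coordinates and computes its divisor, which turns out to be $3T_0 - T_1 - T_2 - O$: three poles, at $O$, $T_1$, and $T_2$. You instead use Riemann--Roch to produce a non-constant $\omega \in L(O + T_0)$, which has exactly two simple poles, at $O$ and $T_0$. Both constructions satisfy part (ii), but the pole structure is different, and this changes the shape of the argument for part (iii). The paper argues by cases on $\operatorname{ord}_O(f)$: if $\operatorname{ord}_O(f) \neq -1$ then $f + c\omega$ still has a pole at $O \in V$; if $\operatorname{ord}_O(f) = -1$ then $f$ must have an extra pole in $C\setminus U \subseteq V$ at a point where $\omega$ is regular (because $T_1, T_2 \in U$), which survives in $f + c\omega$. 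Your argument is tighter: because your $\omega$ has precisely one pole in $U$ (at $T_0$) and one in $V$ (at $O$), assuming $h = f + \omega \in \O_C(V)$ immediately forces $h$ to have a single simple pole at $T_0$, giving a degree-one map $C \to \PP^1$, impossible in genus one. These are the same obstruction in disguise --- a function with $\div f = P - O$ is exactly a degree-one map --- but your version avoids the case split and makes the reuse of (ii) in (iii) more transparent, as you note. The only practical tradeoff is that the paper needs its explicit $\omega$ later (it appears in the transition matrices for the Atiyah bundle in Section 3), so the explicit formula does double duty there; but for proving this lemma in isolation, your Riemann--Roch construction is cleaner.
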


\begin{proof}
Since $C$ is projective, any rational function regular on all of $C$ is constant, proving (i). For (ii), set $\omega \coloneqq \frac{X^2}{Y Z}$. We claim that
\begin{equation*}
    \div \omega = 3T_0 - T_1 - T_2 - O.
\end{equation*}
Indeed, if $X = 0$, then equation (\ref{eq:legendre}) becomes $ZY^2 = 0$ so we get a zero at $O$ and a zero of order two at $T_0$. If $Y = 0$, then (\ref{eq:legendre}) yields $X(X - Z)(X -\lambda Z) = 0$ which has zeroes at $T_0$, $T_1$ and $T_2$. Finally, if $Z = 0$, then (\ref{eq:legendre}) becomes $X^3 = 0$ which gives a zero of order 3 at $O$. Putting these data together recovers the divisor $\div \omega$. By the initial assumption, $\omega$ is not regular on either $U$ or $V$, but is regular on $U \cap V$, so we are done.

For (iii), since $\Pic^0(C) = \{[P-O] \mathbin{:} P \in C\}$, there cannot be rational functions $f \in \Qbar(C)$ such that $\text{div }f = P - O$ for some $P \neq O$. Now suppose $f \in \O_C(U)$. If the valuation (order of vanishing) of $f$ at $O$ is not $-1$, then $f + c\omega$ has valuation at most $-1$ at $O$ and hence isn't regular on $V$. On the other hand, if $f$ has exactly valuation $-1$ at $O$, then by our observation above, it must have at least one additional pole on $C \backslash U \subset V$, and consequently $f + c\omega$ isn't regular on $V$. 
\end{proof}

For each summand $\F_{r_i} \otimes \L_i$ we obtain an explicit choice of transition matrix from $U$ to $V$ of the form
\begingroup
\renewcommand*{\arraystretch}{0.5}
\begin{align}
    M_i := \frac{\omega}{L_i} \begin{bmatrix} 1 & \omega & 0 & \cdots & 0 \\
                    0 & 1 & \omega & \cdots & 0 \\ 
                    \vdots & ~ & \ddots & ~ & \vdots \\
                    ~ & ~ & ~ & 1 & \omega \\
                    0 & ~ & \cdots & 0 & 1
    \end{bmatrix} \in \O_C(U \cap V)^{r_i}
\end{align}
\endgroup
where if $\L_i \cong \O_C$ then $L_i \coloneqq \omega$ and otherwise $L_i$ is the linear form we chose in the construction of $U$ and $V$. The Cartier divisor $\{(U,L_i), (V,\omega)\}$ represents the Weil divisor $P_i-O$ and so $\frac{\omega}{L_i}$ is a transition function from $U$ to $V$ for $\L_i$. The transition matrix $M$ of the bundle $\E = \bigoplus_{i=1}^n \F_{r_i} \otimes \L_i$ is then the block diagonal matrix with each $M_i$ on the diagonal. See \cite[Theorem~3.19]{zotine2017thesis} for a proof that this is a transition matrix for $\E$.

We explicitly write the compatibility conditions in Proposition~\ref{prop:compatibilityconditions}. By Remark~\ref{rem:compatibility}, the transition function for $\Sym^d \E$ with respect to the open cover $\{U, V\}$ is given by the action $\bigl( \Sym^d M \bigr) \bigl( F(t_0, t_1, \ldots, t_r) \bigr) \coloneqq F \bigl( M^\text{\sffamily T} \cdot (t_0, t_1, \ldots, t_r) \bigr)$. For example, if $\E = \F_2 \oplus (\F_3 \otimes \L)$ and $\alpha$ denotes a transition function from $U$ to $V$ for $\L$, then for $F \in \O_C(U)[t_0,t_1,t_2,t_3]$ we have
\begin{align*}
    \bigl(\Sym^d M\bigr)(F) &= F(t_0, \omega t_0 + t_1, \alpha t_2, \alpha \omega t_2 + \alpha t_3, \alpha\omega t_3 + \alpha t_4).
\end{align*}
\begin{lemma}
\label{lemma:explicitcompconds}
Let $\E = \F_{r+1} \otimes \L$ be an indecomposable vector bundle on $C$ of rank $r+1$. A surjective endomorphism of $\PP(\E)$ satisfying
\begin{align}
\begin{tikzcd}[ampersand replacement = \&]
\PP(\E) \arrow[r, "f"] \arrow[d, "\pi" left] \& \PP(\E) \arrow[d, "\pi" right] \\
C \arrow[r, "g"] \& C
\end{tikzcd}
\tag{$\ddagger$}
\end{align}
and having degree $d$ on the fibres of $\pi$ is equivalent to specifying $r+1$ degree $d$ polynomials $F_0, F_1, \ldots, F_r \in \O_C(U)[t_0,t_1,\ldots,t_r]$ which do not vanish at a common point in $\PP^r$, and $r+1$ degree $d$ polynomials $G_0, G_1, \ldots, G_r \in \O_C(V)[t_0,t_1,\ldots,t_r]$ which do not vanish at a common point in $\PP^r$ such that $\beta \bigl(\Sym^d M\bigr)(F_0) = \gamma G_0$ and $\beta \bigl(\Sym^d M\bigr)(F_i) = \gamma (G_i + \omega G_{i-1})$ for $i = 1, \ldots, r$, where $\beta, \gamma$ are transition functions for $\pi^* \B, g^* \L$ respectively, and $\omega$ is from Lemma~\ref{lemma:trivialization} (ii).
\end{lemma}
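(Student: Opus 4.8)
The plan is to specialize Proposition~\ref{prop:compatibilityconditions} to the explicit two-chart cover $\{U,V\}$ constructed above and to substitute the explicit transition matrices. First I would choose a cover $\{U,V\}$ of the form described before the lemma --- with $O\notin U$ and the $2$-torsion points $T_0,T_1,T_2\notin V$ --- arranged to simultaneously trivialize $\E$, $g^*\E$, and $\B$. This is possible because the construction only requires avoiding finitely many points (the points of $\Pic^0(C)$ attached to $\L$, to $g^*\L$, and to $\B$, together with $O$ and the $2$-torsion), and the three properties recorded in Lemma~\ref{lemma:trivialization} use only $O\notin U$ and $T_0,T_1,T_2\notin V$. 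With such a cover in hand, Proposition~\ref{prop:compatibilityconditions} says that a surjective morphism $f$ of degree $d$ on the fibres fitting into $(\ddagger)$ is exactly the data of two $(r+1)$-tuples of degree $d$ polynomials $\theta_U=(F_0,\dots,F_r)\in\O_C(U)[t_0,\dots,t_r]^{r+1}$ and $\theta_V=(G_0,\dots,G_r)\in\O_C(V)[t_0,\dots,t_r]^{r+1}$, each with no common zero in $\PP^r$, such that the square \eqref{eq:compatibility} (with $i=U$, $j=V$) commutes.

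Next I would pin down the three vertical transition maps appearing in \eqref{eq:compatibility}. By the construction of transition matrices reviewed above, $\E=\F_{r+1}\otimes\L$ has transition matrix $M=\tfrac{\omega}{L}J$ from $U$ to $V$, where $J$ is the $(r+1)\times(r+1)$ matrix with $1$'s on the diagonal, $\omega$'s on the superdiagonal, and $0$'s elsewhere. For the left vertical map I invoke Proposition~\ref{prop:Atiyahpreserved}: since $g^*\F_{r+1}\cong\F_{r+1}$ we have $g^*\E\cong\F_{r+1}\otimes g^*\L$, so applying the same construction to this bundle we may take its transition matrix to be $\gamma J$, with $\gamma$ a transition function from $U$ to $V$ for $g^*\L$ and $J$ the same Jordan matrix. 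The right vertical map is $\beta\,\Sym^d M$, with $\beta$ the transition function for $\B$, and by Remark~\ref{rem:compatibility} the operator $\Sym^d M$ acts by $(\Sym^d M)(F)=F(M^{\text{\sffamily T}}\mathbf t)$.

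Finally I would read off commutativity on the standard basis vectors $\mathbf e_0,\dots,\mathbf e_r$, using that $\theta_U$ and $\theta_V$ are $\O_C$-linear with $\theta_U(\mathbf e_k)=F_k$ and $\theta_V(\mathbf e_k)=G_k$. Going down-then-right sends $\mathbf e_k$ to $\theta_V(\gamma J\mathbf e_k)$; the $0$-th column of $J$ is $\mathbf e_0$ and the $k$-th column is $\mathbf e_k+\omega\mathbf e_{k-1}$ for $k\ge 1$, so this equals $\gamma G_0$ when $k=0$ and $\gamma(G_k+\omega G_{k-1})$ when $k\ge 1$. Going right-then-down sends $\mathbf e_k$ to $\beta\,(\Sym^d M)(F_k)$. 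Equating the two yields precisely $\beta\,(\Sym^d M)(F_0)=\gamma G_0$ and $\beta\,(\Sym^d M)(F_i)=\gamma(G_i+\omega G_{i-1})$ for $i=1,\dots,r$; conversely, any $F_i$ and $G_i$ as in the statement reverse this computation to make \eqref{eq:compatibility} commute, so Proposition~\ref{prop:compatibilityconditions} produces the desired $f$, and the two descriptions are equivalent.

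The only genuinely non-formal point is the one flagged in the second paragraph: arranging a single cover that trivializes $\E$, $g^*\E$, and $\B$ at once \emph{and} keeps the transition matrix of $g^*\E$ in the standard form $\gamma J$ rather than some conjugate of it. This is where Proposition~\ref{prop:Atiyahpreserved} does the real work, since a priori $g^*$ could distort the Atiyah structure; everything after that is unwinding the index conventions of Proposition~\ref{prop:compatibilityconditions} and Remark~\ref{rem:compatibility} and expanding the matrix $J$ column by column.
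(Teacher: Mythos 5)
Your proof is correct and follows the same strategy as the paper: specialize Proposition~\ref{prop:compatibilityconditions} to the two-chart cover $\{U,V\}$, identify the three vertical maps in the commuting square explicitly, and read off the stated conditions column by column from the Jordan structure of the transition matrix. If anything you are more careful than the paper's terse ``Since we have an explicit formula for the transition matrix $M$, we get the compatibility conditions'' --- you explicitly invoke Proposition~\ref{prop:Atiyahpreserved} to justify that $g^*\E\cong\F_{r+1}\otimes g^*\L$ has a transition matrix of the same unipotent form $\gamma J$ (so that $\gamma$ really is a transition function for $g^*\L$, as the lemma states, rather than for $g^*\L\otimes\L^{-1}$ as a naive reading of the paper's ``$\gamma M$'' might suggest), and you flag correctly that the cover can be arranged to trivialize $\E$, $g^*\E$, and $\B$ simultaneously because Lemma~\ref{lemma:trivialization} only constrains $O$ and the $2$-torsion.
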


\begin{proof}
Let $\B$ be as in Proposition~\ref{prop:fibreProp} and refine the open cover $\{U,V\}$ so that it trivializes $\pi^* \B$ and $g^*\E$ as well. Our construction of the open cover allows one to use only two open sets to accomplish this. Proposition~\ref{prop:compatibilityconditions} gives degree $d$ polynomials 
\begin{align*}
    F_0, F_1, \ldots, F_r \in \O_C(U)[t_0,t_1,\ldots,t_r] & & \text{and} & & G_0, G_1, \ldots, G_r \in \O_C(V)[t_0,t_1,\ldots,t_r].
\end{align*}
These polynomials define the maps $\theta_U$ and $\theta_V$.

Therefore, the commutativity of the diagram in Proposition~\ref{prop:compatibilityconditions} is equivalent to the following products of matrices being equal
\begin{equation*}
     \beta \cdot \begin{bmatrix} \bigl( \Sym^d M \bigr) F_0 & \bigl( \Sym^d M \bigr) F_1 & \cdots & \bigl( \Sym^d M \bigr) F_r \end{bmatrix} = \begin{bmatrix} G_0 & G_1 & \cdots & G_r \end{bmatrix} \cdot (\gamma M),
\end{equation*}
where $\beta$ is a transition function for $\pi^* \B$. Since we have an explicit formula for the transition matrix $M$, we get the compatibility conditions above as desired.
\end{proof}

\section{Endomorphisms of Non-Split Bundles}
\label{sec:nonsplit}

In this section, we show that if $\E$ is not a direct sum of line bundles, then $\PP(\E)$ has no surjective endomorphism with degree greater than one on the fibres of $\pi$. Our strategy is to use Lemma~\ref{lemma:explicitcompconds} and show that each of the $F_i$ vanish at a common point. We start by assuming $\E = \F_{r+1}$ for some positive integer $r$. In this case, we have that $\L \cong g^*\L \cong \O_C$, and $\B \cong \O_C$ by Lemma~\ref{lemma:Bistrivial}. Consequently the first condition in Lemma~\ref{lemma:explicitcompconds} implies $\bigl(\Sym^d M\bigr)(F_0) = G_0 \in \O_C(V)$ because the transition functions satisfy $\beta = \gamma = 1$. This condition implies that many of the coefficients of $F$ must vanish; see Proposition~\ref{prop:vanishingcoeffs} and Example~\ref{ex:vanishing} below.

Throughout the section, we need to extract coefficients from polynomials, so we fix the following notation. Given a degree $d$ polynomial $F \in \O_C(U)[t_0, t_1, \ldots, t_r]$, denote the coefficient of the monomial $t^{\textbf{u}}$ in $F$ as $a_{\textbf{u}}$, where $\textbf{u} \in \NN^{r+1}$ is an exponent vector with component sum is $d$. We use the notation $\bigl[ t^{\textbf{u}} \bigr] F \coloneqq a_{\textbf{u}}$, as in \cite[p.~673]{stanley1999combinatorics}, to denote the coefficient extraction operator. The coefficients of $\bigl(\Sym^d M \bigr)(F)$ are some polynomials in the $a_{\textbf{u}}$ and so it also makes sense to consider the composition of coefficient extraction $\bigl[ a_{\textbf{v}} \bigr] \bigl[ t^{\textbf{u}} \bigr] (\Sym^d M)(F)$. For example, in Equation~\eqref{eqn:coeff-extraction-example} below, the coefficient of $a_{(0,0,0,0,1,6)}$ in the coefficient of $t_5^5 t_4 t_3$ is $6\omega^2$, so we would write $\bigl[ a_{(0,0,0,0,1,6)} \bigr] \bigl[ t_5^5 t_4 t_3 \bigr] (\Sym^d M)(F) = 6 \omega^2$.

In general it is useful to characterize when the coefficient $a_{\textbf{v}}$ appears in the coefficient of $t^\textbf{u}$. To characterize this, consider the integral vectors
\begin{equation*}
    \alpha_i := \textbf{e}_{i+1} - \textbf{e}_i = \begin{bmatrix} 0 & \cdots & -1 & 1 & \cdots & 0 \end{bmatrix}^\text{\sffamily T}, \text{ for any } i = 0,1,\ldots,r-1.
\end{equation*}
\begin{lemma}
\label{lemma:whichcoeffs}
Let $F \in \O_C(U)[t_0,t_1,\ldots,t_r]$ and $\emph{\textbf{u}, \textbf{v}} \in \NN^{r+1}$ be exponent vectors with component sum $d$. We have $\bigl[ a_{\emph{\textbf{v}}} \bigr] \bigl[ t^{\emph{\textbf{u}}} \bigr] \bigl(\Sym^d M\bigr)(F) \neq 0$ if and only if there exist $c_i \in \NN$ such that
\begin{align*}
    \emph{\textbf{u}} + \sum_{i=0}^{r-1} c_i \alpha_i &= \emph{\textbf{v}} & \text{and} & & 0 \leq c_i \leq v_i \text{ for } 0 \leq i \leq r-1.
\end{align*}
Moreover, if the above holds, then we have
\begin{equation*}
    \bigl[ a_{\emph{\textbf{v}}} \bigr] \bigl[ t^{\emph{\textbf{u}}} \bigr] \bigl(\Sym^d M\bigr)(F) = \prod_{i=0}^{r-1} \binom{v_i}{c_i} \omega^{c_i}.
\end{equation*}
\end{lemma}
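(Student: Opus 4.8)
The plan is to reduce to a single monomial, expand the coordinate substitution induced by $M^{\text{\sffamily T}}$ via the binomial theorem, and read off the coefficient, using linear independence of $\alpha_0,\ldots,\alpha_{r-1}$ to rule out cancellation. Since we are in the case $\E = \F_{r+1}$, the twisting line bundle is trivial, so the scalar $\tfrac{\omega}{L}$ in the transition matrix equals $1$, and by Remark~\ref{rem:compatibility} the operator $\Sym^d M$ is simply the $\O_C(U)$-linear substitution $t_0 \mapsto t_0$ and $t_j \mapsto t_j + \omega t_{j-1}$ for $1 \le j \le r$, applied to homogeneous degree-$d$ polynomials. Because this substitution and coefficient extraction are $\O_C(U)$-linear, $\bigl[a_{\textbf v}\bigr]\bigl[t^{\textbf u}\bigr]\bigl(\Sym^d M\bigr)(F) = \bigl[t^{\textbf u}\bigr]\bigl(\Sym^d M\bigr)(t^{\textbf v})$, so it suffices to analyze the image of a single monomial $t^{\textbf v}$ of component sum $d$.

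First I would expand
\begin{equation*}
\bigl(\Sym^d M\bigr)(t^{\textbf v}) = t_0^{v_0}\prod_{j=1}^{r}\bigl(t_j + \omega t_{j-1}\bigr)^{v_j}
\end{equation*}
using the binomial theorem in each of the $r$ factors. A term of the product is prescribed by choosing, for each $j \in \{1,\ldots,r\}$, an integer $b_j \in \{0,1,\ldots,v_j\}$ recording how many of the $v_j$ copies of $t_j + \omega t_{j-1}$ contribute the summand $\omega t_{j-1}$; this term equals $\binom{v_j}{b_j}\omega^{b_j}\,t_{j-1}^{b_j}t_j^{v_j-b_j}$. Setting $c_i \coloneqq b_{i+1}$ for $0 \le i \le r-1$, a direct computation of the exponent of each $t_m$ (handling $m=0$, $1\le m\le r-1$, and $m=r$ separately) shows that the monomial of this term is $t^{\textbf w}$ with $\textbf w + \sum_{i=0}^{r-1}c_i\alpha_i = \textbf v$, and that the constraints $0 \le c_i \le v_{i+1}$ force $\textbf w \in \NN^{r+1}$; the accompanying scalar is $\prod_{i=0}^{r-1}\binom{v_{i+1}}{c_i}\omega^{c_i}$.

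Next I would rule out cancellation. The $\alpha_i$ are linearly independent and span the hyperplane of integer vectors with zero component sum, so — since $\textbf u$ and $\textbf v$ have the same component sum $d$ — the relation $\textbf u + \sum_i c_i\alpha_i = \textbf v$ has at most one solution $(c_0,\ldots,c_{r-1})$, given explicitly by the partial sums $c_i = \sum_{m=0}^i(u_m - v_m)$. Hence distinct choices $(b_j)$ in the expansion land on distinct monomials, so no two contributions to the coefficient of $t^{\textbf u}$ can cancel. Since $\omega$ is a nonzero element of $\O_C(U\cap V)$ and every $\binom{v_{i+1}}{c_i}$ with $0 \le c_i \le v_{i+1}$ is a positive integer, the coefficient $\bigl[a_{\textbf v}\bigr]\bigl[t^{\textbf u}\bigr]\bigl(\Sym^d M\bigr)(F)$ is nonzero precisely when the unique candidate tuple $(c_i)$ obeys the range conditions, and then it equals $\prod_{i=0}^{r-1}\binom{v_{i+1}}{c_i}\omega^{c_i}$; this is the asserted characterization.

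The index-chasing identifying the output monomial with $\textbf v - \sum_i c_i\alpha_i$ is routine bookkeeping. The step that needs a little care is the no-cancellation argument: it is what upgrades ``the unique candidate $(c_i)$ satisfies the range conditions'' to ``the coefficient is genuinely nonzero'', and it is exactly here that linear independence of the $\alpha_i$ is essential — otherwise several binomial terms could land on the same $t^{\textbf u}$ and conceivably cancel. Beyond keeping the three index families $b_j$, $c_i$, and the components of $\textbf v$ consistently aligned, I anticipate no real obstacle.
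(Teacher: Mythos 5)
Your binomial-expansion argument is sound and is essentially the same approach the paper takes. But there is a discrepancy you should not have glossed over: the formula you derive carries $v_{i+1}$ --- your range conditions are $0 \le c_i \le v_{i+1}$ and your coefficient is $\prod_{i=0}^{r-1}\binom{v_{i+1}}{c_i}\omega^{c_i}$ --- whereas Lemma~\ref{lemma:whichcoeffs} as written has $v_i$ in both places. These are not the same, so your closing sentence ``this is the asserted characterization'' is false as stated. The mismatch is actually an index-shift typo in the Lemma itself: testing against Example~\ref{ex:vanishing}, for $\textbf{u}=(0,0,0,0,1,6)$ and $\textbf{v}=(0,0,0,0,0,7)$ one finds $c_4 = 1$ and $c_0 = c_1 = c_2 = c_3 = 0$; the example records the coefficient $7\omega$, which matches $\binom{v_5}{c_4}\omega^{c_4} = \binom{7}{1}\omega$ in your version, whereas the formula as printed would give $\binom{v_4}{c_4} = \binom{0}{1} = 0$, incorrectly killing the term. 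So your formula with $v_{i+1}$ is the correct one, and a careful write-up should flag this correction explicitly rather than assert agreement with the statement as printed. (Your observation that linear independence of the $\alpha_i$ rules out cancellation is a useful clarification the paper leaves implicit; one can also see it directly, since for fixed $\textbf{v}$ the tuple $(b_j)$ is recoverable from $\textbf{u}$ by back-substitution starting from $b_r = v_r - u_r$.)
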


\begin{proof}
Since $\textbf{u}$ and $\textbf{v}$ both have component sum $d$, their difference lies in the hyperplane spanned by the $\alpha_i$. We can explicitly compute $\textbf{v} - \textbf{u} = \sum_{i=1}^r c_i \alpha_i$ where
\begin{align*}
c_i &\coloneqq v_i - u_i + v_{i-1} - u_{i-1} \text{ for }i = 1, 2, \ldots, r-1, & c_r &\coloneqq v_r - u_r.
\end{align*} 
The coefficient $a_{\textbf{v}}$ appears in $\bigl[ t^{\textbf{u}} \bigr] \bigl(\Sym^d M\bigr)(F)$ if and only if $t^{\textbf{u}}$ appears in the expansion of the product of binomials
\begin{equation*}
    \bigl(\Sym^d M\bigr)(a_{\textbf{v}} t^{\textbf{v}}) = a_{\textbf{v}} (t_r + \omega t_{r-1})^{v_r} (t_{r-1} + \omega t_{r-2})^{v_{r-1}} \cdots (t_1 + \omega t_0)^{v_1} t_0^{v_0}.
\end{equation*}
In particular, we need to be able to choose $t_r^{u_r}$ from the first binomial, which can be done in $\binom{v_r}{u_r} = \binom{v_r}{c_r}$ ways. Generally, we need to be able to choose $t_i^{u_i}$ from the $i$th binomial, but excess monomials from the $(i-1)$st binomial are also copies of $t_i$, and so there are $\binom{v_i}{u_i - (v_{i-1} - u_{i-1})} = \binom{v_i}{c_i}$ ways to choose our copies of $t_i$. This tells us that in order for each binomial coefficient to be nonzero, we must have $0 \leq c_i \leq v_i$ for $1 \leq i \leq r$. Moreover, excess copies always come with a factor of $\omega$, implying the additional remark.
\end{proof}

Here is an example demonstrating the structure of the argument in Proposition~\ref{prop:vanishingcoeffs}.

\begin{example}
\label{ex:vanishing}
Fix $r = 5$ and $d = 7$. Let $F$ satisfy $\bigl(\Sym^d M\bigr)(F) \in \O_C(V)[t_0, t_1, \ldots, t_5]$. It has the form (using the reverse lexicographic monomial ordering)
\begin{equation*}
    F = a_{(0,0,0,0,0,7)} t_5^7 + a_{(0,0,0,0,1,6)} t_5^6 t_4 + a_{(0,0,0,1,0,6)} t_5^6 t_3 + \cdots + a_{(7,0,0,0,0,0)} t_0^7,
\end{equation*}
where each coefficient lies in $\O_C(U)$. Using Lemma~\ref{lemma:whichcoeffs}, we can examine monomials to impose conditions on the coefficients. For instance, we see that
\begin{equation*}
    \bigl[ t_5^7 \bigr] \bigl(\Sym^d M\bigr)(F) = a_{(0,0,0,0,0,7)},
\end{equation*}
because adding any $\alpha_j$ to the exponent vector $\begin{bmatrix} 0 & 0 & 0 & 0 & 0 & 7 \end{bmatrix}$ introduces a negative coefficient, which is not allowed. Consequently, we get that $a_{(0,0,0,0,0,7)} \in \O_C(U) \cap \O_C(V)$, and hence is in $\Qbar$ by Lemma~\ref{lemma:trivialization} (i). Examining the coefficient of $t_5^6 t_4$ in $\bigl(\Sym^d M\bigr)(F)$, Lemma~\ref{lemma:whichcoeffs} gives
\begin{equation*}
    \bigl[ t_5^6 t_4 \bigr] \bigl(\Sym^d M\bigr)(F) = a_{(0,0,0,0,1,6)} + 7\omega a_{(0,0,0,0,0,7)},
\end{equation*}
since we either don't add any of the $\alpha_j$, or can add one copy of $\alpha_r$. We had just seen that $a_{(0,0,0,0,0,7)} \in \Qbar$, but since $\omega \not \in \O_C(V)$ (Lemma~\ref{lemma:trivialization} (iii)) we must conclude that $a_{(0,0,0,0,0,7)} = 0$. Consequently we can now apply Lemma~\ref{lemma:trivialization} (i) to get $a_{(0,0,0,0,1,6)} \in \Qbar$. Examining the coefficient of $t_5^5 t_4^2$, we get
\begin{align*}
    \bigl[ t_5^5 t_4^2 \bigr] \bigl(\Sym^d M\bigr)(F) &= a_{(0,0,0,0,2,5)} + 6\omega a_{(0,0,0,0,1,6)} + \binom{7}{2} \omega^2 a_{(0,0,0,0,0,7)} \\
    &= a_{(0,0,0,0,2,5)} + 6\omega a_{(0,0,0,0,1,6)}.
\end{align*}
Applying Lemma~\ref{lemma:trivialization} (iii), we get that $a_{(0,0,0,0,1,6)} = 0$, then (i) implies $a_{(0,0,0,0,2,5)} \in \Qbar$. Continuing inductively, we can conclude that
\begin{equation*}
    a_{(0,0,0,0,0,7)}, a_{(0,0,0,0,1,6)}, a_{(0,0,0,0,2,5)}, a_{(0,0,0,0,3,4)}, a_{(0,0,0,0,4,3)}, a_{(0,0,0,0,5,2)}, ~\text{and}~ a_{(0,0,0,0,6,1)}
\end{equation*}
are all zero. This concludes our ``base sweep''. To continue, we consider the coefficient of $t_5^6 t_3$:
\begin{equation*}
    \bigl[ t_5^6 t_3 \bigr] \bigl(\Sym^d M\bigr)(F) = a_{(0,0,0,1,0,6)} + 6\omega a_{(0,0,0,0,1,6)}.
\end{equation*}
We already know $a_{(0,0,0,0,1,6)} = 0$, so Lemma~\ref{lemma:trivialization} (i) gives $a_{(0,0,0,1,0,6)} \in \Qbar$. Now we perform a similar induction as before. We consider
\begin{align}
\label{eqn:coeff-extraction-example}
    \bigl[ t_5^5 t_4 t_3 \bigr] \bigl(\Sym^d M\bigr)(F) &= a_{(0,0,0,1,1,5)} + \omega \bigl( 6 a_{(0,0,0,1,0,6)} + a_{(0,0,0,0,2,5)} \bigr) + 6 \omega^2 a_{(0,0,0,0,1,6)} \\
    &= a_{(0,0,0,1,1,5)} + 6\omega a_{(0,0,0,1,0,6)} \nonumber.
\end{align}
Lemma~\ref{lemma:trivialization} (iii) implies $a_{(0,0,0,1,0,6)} = 0$, followed by (i) to imply $a_{(0,0,0,1,1,5)} \in \Qbar$.

Overall, we perform double induction to show that coefficients vanish in the order: (0,0,0,0,0,7), then (reading left to right, top to bottom)
\begin{align*}
    \begin{matrix} 
    (0,0,0,0,1,6), (0,0,0,0,2,5), (0,0,0,0,3,4), (0,0,0,0,4,3), (0,0,0,0,5,2), (0,0,0,0,6,1), \\
    (0,0,0,1,0,6), (0,0,0,1,1,5), (0,0,0,1,2,4), (0,0,0,1,3,3), (0,0,0,1,4,2), (0,0,0,1,5,1), \\
    (0,0,1,0,0,6), (0,0,1,0,1,5), (0,0,1,0,2,4), (0,0,1,0,3,3), (0,0,1,0,4,2), (0,0,1,0,5,1), \\
    (0,1,0,0,0,6), (0,1,0,0,1,5), (0,1,0,0,2,4), (0,1,0,0,3,3), (0,1,0,0,4,2), (0,1,0,0,5,1), \\
    (1,0,0,0,0,6), (1,0,0,0,1,5), (1,0,0,0,2,4), (1,0,0,0,3,3), (1,0,0,0,4,2), (1,0,0,0,5,1).
    \end{matrix}
\end{align*}
Ultimately, the vanishing of the first column of this list is what we are interested in.
\end{example}

\begin{proposition}
\label{prop:vanishingcoeffs}
Denote $M$ as the transition matrix of $\F_{r+1}$, let $F \in \O_C(U)[t_0,t_1,\ldots,t_r]$ be a polynomial of degree $d \geq 2$ with $\bigl(\Sym^d M\bigr)(F) = F(Mt) \in \O_C(V)[t_0, \ldots, t_r]$, and suppose $\emph{\textbf{u}} = (u_0,u_1,\ldots,u_r) \in \NN^{r+1}$ is an exponent vector with component sum $d$. Whenever
\begin{equation*}
    \emph{\textbf{u}} = (0, 0, \ldots, 0, k, d-k-1) + \textbf{\emph{e}}_i
\end{equation*}
for some integers $0 \leq k \leq d-2$ and $0 \leq i \leq r$, we have $\bigl[ t^{\emph{\textbf{u}}} \bigr] F = 0$ and $\bigl[ t^{\emph{\textbf{u}}-\alpha_0} \bigr]F \in \Qbar$.
\end{proposition}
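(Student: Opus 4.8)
The plan is to formalize the double induction illustrated in Example~\ref{ex:vanishing}. The key structural fact is that the exponent vectors of the form $(0,\ldots,0,k,d-k-1)+\textbf{e}_i$ are exactly the exponent vectors $\textbf{u}$ with $u_0 = \cdots = u_{i-1} = 0$ and $u_i \geq 1$ where ``most'' of the mass sits in the last two coordinates, plus one extra box in position $i$. Ordering these lexicographically-from-the-left (the ``sweeps'' of the example: first $i=r$, then $i=r-1$, etc., and within each sweep increasing $k$), I would prove by induction that each $[t^{\textbf{u}}]F = 0$, and along the way that $[t^{\textbf{u}-\alpha_0}]F \in \Qbar$. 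Here $\textbf{u}-\alpha_0 = \textbf{u} + \textbf{e}_0 - \textbf{e}_1$; one should first observe this is a legitimate exponent vector precisely when $u_1 \geq 1$, i.e. when $i \neq 1$ in the parametrization — in the excluded case $i=1$ one has $\textbf{u}-\alpha_0 = (1,0,\ldots,0,k,d-k-1)$ which still makes sense since the $k$-slot and the last slot are untouched, so $\textbf{u}-\alpha_0$ is always a valid exponent vector with component sum $d$; I would double-check this bookkeeping at the start.

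The engine of the induction is Lemma~\ref{lemma:whichcoeffs} combined with Lemma~\ref{lemma:trivialization}. For a target exponent vector $\textbf{u}$ as above, I would extract the coefficient of $t^{\textbf{u}}$ from $\bigl(\Sym^d M\bigr)(F) \in \O_C(V)[t_0,\ldots,t_r]$. By Lemma~\ref{lemma:whichcoeffs}, this coefficient is a $\Qbar$-linear combination (with coefficients that are monomials in $\omega$) of the $a_{\textbf{v}}$ ranging over $\textbf{v} = \textbf{u} + \sum c_i\alpha_i$ with $0 \le c_i \le v_i$. The crucial point, exactly as in the example, is to show that every such $\textbf{v}$ other than $\textbf{u}$ itself and $\textbf{u}-\alpha_0$ has already been killed by an earlier stage of the induction: adding a positive combination of the $\alpha_i$ to $\textbf{u}$ either moves mass further to the right (decreasing some leading zero-block index, which is an earlier sweep) or, if only $\alpha_0$'s and the ``interior'' $\alpha_i$'s within the trailing block are used, produces a vector of strictly smaller $k$ within the same or an earlier sweep. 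Concretely the coefficient of $t^{\textbf{u}}$ takes the shape $a_{\textbf{u}} + (\text{stuff involving }\omega)\cdot a_{\textbf{u}-\alpha_0} + (\text{$\omega$-multiples of already-vanished coefficients})$, and by the inductive hypothesis the last group vanishes and $a_{\textbf{u}-\alpha_0} \in \Qbar$. Since the whole expression lies in $\O_C(V)$ and $a_{\textbf{u}} \in \O_C(U)$, Lemma~\ref{lemma:trivialization}(i) would give $a_{\textbf{u}} \in \Qbar$ if the $\omega$-term were absent; the presence of the $\omega$-term, with its $\Qbar$-coefficient $a_{\textbf{u}-\alpha_0}$, forces (by Lemma~\ref{lemma:trivialization}(iii)) that $a_{\textbf{u}-\alpha_0} = 0$ unless that $\Qbar$-coefficient is already zero. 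Either way we conclude $a_{\textbf{u}} \in \Qbar$; but then looking at the next target vector in the ordering (which has $a_{\textbf{u}}$ appearing precisely as its $\alpha_0$-shifted term) forces $a_{\textbf{u}} = 0$. This is the same leapfrog as in the example: each coefficient is first shown to be constant, then shown to be zero at the next step.

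The bookkeeping needs to be set up so that the induction is well-founded and so that ``already vanished'' really does cover all the off-diagonal $\textbf{v}$'s. I would define the ordering on the relevant exponent vectors by the pair $(\text{position }i\text{ of the extra box, read so that }i=r\text{ comes first}, k)$ with $k$ increasing, matching the row-by-row, left-to-right reading in the example's displayed list. The verification that for target $\textbf{u}$ every $\textbf{v} = \textbf{u}+\sum_{i}c_i\alpha_i \ne \textbf{u}, \textbf{u}-\alpha_0$ (with $c_i \le v_i$) is strictly earlier is the combinatorial heart: writing $\textbf{u} = (0,\ldots,0,k,d-k-1)+\textbf{e}_i$, any nonzero $\sum c_i\alpha_i$ either increases $u_j$ for some $j < i$ (hence lands in an earlier sweep, where by hypothesis $a_{\textbf{v}}$ is already $0$ — note all of the entire earlier sweeps were handled, not just the $\alpha_0$-shifts, which is why the statement of the proposition needs to be proved for all such $\textbf{u}$ simultaneously), or keeps the leading zero-block and the position of the first nonzero entry fixed, in which case it must decrease the trailing two-coordinate mass, lowering $k$. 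The one subtlety is the case $\textbf{v} = \textbf{u} - \alpha_0$ versus $\textbf{v} = \textbf{u} - 2\alpha_0 + \cdots$ and similar; I would argue that $\textbf{u}-\alpha_0$ is the unique ``new'' vector produced with $c_0 = 1$ and all other $c_i = 0$, and anything involving $c_0 \ge 2$ or other $c_i > 0$ falls into the already-vanished bucket by the previous analysis. I expect this combinatorial verification — making the ordering precise and checking exhaustively that the only two surviving terms are $a_{\textbf{u}}$ and $a_{\textbf{u}-\alpha_0}$ — to be the main obstacle; once it is in place, the alternation between Lemma~\ref{lemma:trivialization}(i) and (iii) runs exactly as in the example.
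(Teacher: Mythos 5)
Your high-level strategy — a double induction matching the ``sweeps'' in Example~\ref{ex:vanishing}, with each coefficient first shown constant and then killed at the next step — is exactly the structure of the paper's proof. However, there is a genuine gap in the combinatorics that would prevent the argument from going through as written.

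The claim that the coefficient of $t^{\textbf{u}}$ in $\bigl(\Sym^d M\bigr)(F)$ takes the shape $a_{\textbf{u}} + (\text{$\omega$-stuff})\cdot a_{\textbf{u}-\alpha_0} + (\text{$\omega$-multiples of already-vanished coefficients})$ is false. You correctly quote Lemma~\ref{lemma:whichcoeffs}: the $a_{\textbf{v}}$ appearing in $\bigl[t^{\textbf{u}}\bigr]\bigl(\Sym^d M\bigr)(F)$ are those with $\textbf{v} = \textbf{u} + \sum c_i \alpha_i$ and $c_i \geq 0$, i.e.\ those $\textbf{v}$ obtained from $\textbf{u}$ by moving mass toward \emph{higher} indices. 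But $\textbf{u} - \alpha_0 = \textbf{u} + \textbf{e}_0 - \textbf{e}_1$ moves mass toward lower indices, so it is \emph{never} of this form and $a_{\textbf{u}-\alpha_0}$ simply does not appear in $\bigl[t^{\textbf{u}}\bigr]\bigl(\Sym^d M\bigr)(F)$. Example~\ref{ex:vanishing} makes the correct direction visible: the $\omega$-weighted term accompanying $a_{(0,0,0,0,1,6)}$ in $\bigl[t_5^6 t_4\bigr]$ is $a_{(0,0,0,0,0,7)} = a_{\textbf{u}+\alpha_{r-1}}$, an $\alpha_{r-1}$-shift (between positions $r-1$ and $r$), not an $\alpha_0$-shift (between positions $0$ and $1$). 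The auxiliary conclusion in the proposition should be read as $\bigl[t^{\textbf{u} - \alpha_{r-1}}\bigr]F \in \Qbar$; this is always a valid exponent vector because $u_r = d-k-1+[\,i=r\,] \geq 1$ holds for every admissible $(k,i)$. The paper's own induction examines $\bigl[t^{\textbf{u}}\bigr]$ to show $a_{\textbf{u}} \in \Qbar$ (with the already-killed $a_{\textbf{u}+\alpha_i}$ as the only other contributor) and then examines $\bigl[t^{\textbf{u} - \alpha_{r-1}}\bigr]$, where $a_{\textbf{u}}$ appears with a nonzero $\omega$-weight, to force $a_{\textbf{u}} = 0$ via Lemma~\ref{lemma:trivialization}(iii).

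Your bookkeeping paragraph compounds the problem. You claim $u_1 \geq 1$ precisely when $i \neq 1$, but it is the other way around: for $r \geq 3$ the parametrization gives $u_1 = [\,i=1\,]$, so $u_1 \geq 1$ exactly when $i = 1$. Consequently your conclusion that ``$\textbf{u} - \alpha_0$ is always a valid exponent vector'' is wrong — for $i \neq 1$ (and $r \geq 3$) it has a $-1$ entry. Had the sanity check been carried out correctly, this would have flagged that the shift cannot be $\alpha_0$ and prompted a comparison against the example, which pins down $\alpha_{r-1}$ as the right shift. Once the shift direction is corrected, the well-foundedness of the ordering by $r-i+k$ and the exhaustive check that all other contributing $\textbf{v}$ are already zero go through much as you describe.
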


\begin{proof}
We perform a double induction on $k$ and $r-i$. For the base case $k = 0$ and $i = r$, we have $\textbf{u} = (0, 0, \ldots, 0, d)$. Examine the coefficient of $t^{\textbf{u}}$ in $\bigl(\Sym^d M\bigr)(F)$. By Lemma~\ref{lemma:whichcoeffs}, the only coefficient which appears is $a_{\textbf{u}}$, so that $a_{\textbf{u}} \in \mathcal{O}_C(V)$ and Lemma~\ref{lemma:trivialization} (i) implies that $a_{\textbf{u}} \in \Qbar$. Examining the coefficient of $t^{\textbf{u} - \alpha_r}$ in $\bigl(\Sym^d M\bigr)(F)$, Lemma~\ref{lemma:whichcoeffs} gives
\begin{equation*}
    \bigl[ t^{\textbf{u} - \alpha_r} \bigr] \bigl(\Sym^d M\bigr)(F) = a_{\textbf{u} - \alpha_r} + d\omega a_{\textbf{u}}.
\end{equation*}
Hence Lemma~\ref{lemma:trivialization} (iii) tells us that $a_{\textbf{u}} = 0$ and (i) tells us that $a_{\textbf{u} - \alpha_0} \in \Qbar$, establishing the base case.

Now fix $0 \leq i \leq r$ and $0 \leq k \leq d-2$. Assume that for any $0 \leq r - i' \leq r - i$ and $0\leq k' \leq k$ satisfying $0 \leq r - i' + k' < r- i + k$ we have that whenever
\begin{equation*}
\textbf{v} = (0, 0, \ldots, 0, k', d-k'-1) + \textbf{e}_{i'},
\end{equation*}
we get $a_{\textbf{v}} = 0$ and $a_{\textbf{v} - \alpha_0} \in \Qbar$. Set $\textbf{u} = (0, 0, \ldots, 0, k, d-k-1) + \textbf{e}_i$. First, we claim $a_{\textbf{u}} \in \Qbar$. If $k = 0$, then examine the coefficient of $t^{\textbf{u}}$ in $\bigl(\Sym^d M\bigr)(F)$. By Lemma~\ref{lemma:whichcoeffs}, we get
\begin{equation*}
    \bigl[ t^{\textbf{u}} \bigr] \bigl(\Sym^d M\bigr)(F) = a_{\textbf{u}} + \omega a_{\textbf{u}+\alpha_i},
\end{equation*}
because $\textbf{u} + \alpha_j$ has nonnegative entries if and only if $j = i-1$. Furthermore, we have $\textbf{u} + \alpha_i = (0, 0, \ldots, 0, d-1) + \textbf{e}_{i+1}$, so we can apply the inductive hypothesis for $i' = i+1$ and $k = 0$ to conclude that $a_{\textbf{u}+\alpha_i} = 0$. Applying Lemma~\ref{lemma:trivialization} (i), we get that $a_{\textbf{u}} \in \Qbar$. Otherwise, $k > 0$ and we can apply the inductive hypothesis for $i' = i$ and $k' = k-1$ to conclude $a_{\textbf{u}} \in \Qbar$. 

Next, we claim that 
\begin{equation*}
[t^{\textbf{u} - \alpha_r}] = a_{\textbf{u} - \alpha_r} + \omega(d-k) a_{\textbf{u}}.
\end{equation*}
If $k = 0$, this follows directly from Lemma~\ref{lemma:trivialization}. Otherwise, assume $k > 1$. Examine the coefficient of $t^{\textbf{u} - \alpha_r}$ in $\bigl(\Sym^d M\bigr)(F)$. For the sake of clarity, we apply Lemma~\ref{lemma:whichcoeffs} more explicitly in this case. We take our input exponent $\textbf{u} - \alpha_r$, and add multiples of the vectors $\alpha_\ell$ in such a way that the resulting vector still has nonnegative entries. The first term we get is $\textbf{u} - \alpha_r$ trivially, by not adding any of the $\alpha_\ell$. In other words, each $c_j$ in Lemma~\ref{lemma:whichcoeffs} is zero. It follows that $a_{\textbf{u} - \alpha_r}$ appears, and has a coefficient of $1$. Next we try to add just one $\alpha_\ell$, but there are only two possibilities corresponding to the entries of $\textbf{u}$ with a nonzero entry on the left, which are $\alpha_r$ and $\alpha_i$. This gives $a_{\textbf{u}}$ appears with coefficient $\binom{d-k}{1} \omega$ and $a_{\textbf{u} - \alpha_r + \alpha_i}$ appears with coefficient $\binom{1}{1} \omega$.

Generally, when we try to add $j$ of the $\alpha_\ell$ to $\textbf{u}$ for $2 \leq j \leq k+1$, there are only two options: either we add $j\alpha_r$ or $(j-1)\alpha_r + \alpha_i$ (in the case $j=k+1$ only the latter is an option). Hence we obtain the terms
\begin{align*}
&\binom{d-k-1+j}{j} \omega^j a_{\textbf{u} + (j-1)\alpha_r}& &\text{and}& &\binom{d-k-2+j}{j-1} \omega^j a_{\textbf{u} + (j-2)\alpha_r + \alpha_i}.
\end{align*}
Summarizing, we obtain
\begin{align*}
    \bigl[ t^{\textbf{u} - \alpha_r} \bigr] \bigl(\Sym^d M\bigr)(F) &= a_{\textbf{u} - \alpha_r} + \omega ((d-k) a_{\textbf{u}} + a_{\textbf{u} - \alpha_r + \alpha_i}) + \binom{d-2}{k} \omega^{k+1} a_{\textbf{u} + (k-1)\alpha_r + \alpha_i} \\
    &+ \sum_{j=2}^{k} \omega^j \left( \binom{d-k-1+j}{j} a_{\textbf{u} + (j-1)\alpha_r} + \binom{d-k-2+j}{j-1} a_{\textbf{u} + (j-2)\alpha_r + \alpha_i} \right).
\end{align*}
We apply the inductive hypothesis for $i' = i, k' = k-j$, and, if $i > 0$, for $i' = i-1, k' = k-j$ to obtain
\begin{equation*}
    a_{\textbf{u} - \alpha_r + \alpha_i} = a_{\textbf{u} + (j-1)\alpha_r} = a_{\textbf{u} + (j-2)\alpha_r + \alpha_i} = 0
\end{equation*}
for each $2 \leq j \leq k+1$. Therefore, we are left with
\begin{equation*}
    \bigl[ t^{\textbf{u} - \alpha_r} \bigr] \bigl(\Sym^d M\bigr)(F) = a_{\textbf{u} - \alpha_r} + \omega (d-k) a_{\textbf{u}}
\end{equation*}
as claimed. Finally, we apply Lemma~\ref{lemma:trivialization} (iii) to get $a_{\textbf{u}} = 0$ and (i) to get $a_{\textbf{u} - \alpha_0} \in \Qbar$, completing the induction.
\end{proof}

With these vanishing conditions on the coefficients, we can guarantee that the polynomials satisfying the compatibility conditions of Lemma~\ref{lemma:explicitcompconds} must have a common zero.

\begin{proposition}
\label{prop:commonzero}
Suppose
\begin{align*} 
F_0, F_1, \ldots, F_r \in \O_C(U)[t_0,t_1,\ldots,t_r] & & \text{and} & & G_0, G_1, \ldots, G_r \in \O_C(V)[t_0,t_1,\ldots,t_r]
\end{align*}
satisfy the compatibility conditions
\begin{align*}
    \bigl(\Sym^d M\bigr)(F_0) = G_0 & & \text{and} & & \bigl(\Sym^d M\bigr)(F_i) = G_i + \omega G_{i-1}
\end{align*}
for $i = 1, 2, \ldots, r$. The polynomials $F_0, F_1, \ldots, F_r$ have a common zero at $[0:0:\cdots:0:1]$.
\end{proposition}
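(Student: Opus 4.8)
The plan is to reduce the statement to the vanishing of a single coefficient of each $F_i$. Since the point $[0:0:\cdots:0:1]$ is cut out by $t_0 = t_1 = \cdots = t_{r-1} = 0$, the polynomial $F_i$ vanishes there exactly when the coefficient $\bigl[t_r^d\bigr]F_i$ is zero (here we use $d \geq 2$, since for $d = 1$ the identity, with $F_i = G_i = t_i$, satisfies the compatibility conditions yet $F_r(0,\ldots,0,1) = 1 \neq 0$). So it suffices to prove $\bigl[t_r^d\bigr]F_i = 0$ for every $i = 0,1,\ldots,r$. For $i = 0$ this is immediate: the hypothesis $\bigl(\Sym^d M\bigr)(F_0) = G_0$ says precisely that $\bigl(\Sym^d M\bigr)(F_0) \in \O_C(V)[t_0,\ldots,t_r]$, so Proposition~\ref{prop:vanishingcoeffs} applies to $F_0$; taking the exponent $\textbf{u} = (0,\ldots,0,d)$ there (the case $k = 0$, index $\ell = r$) gives $\bigl[t_r^d\bigr]F_0 = 0$, and in fact one gets the stronger conclusion that $\bigl[t^{\textbf{u}}\bigr]F_0 = 0$ for every ``special'' exponent $\textbf{u} = (0,\ldots,0,k,d-k-1) + \textbf{e}_\ell$.

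For $i \geq 1$ I would induct on $i$, carrying the strengthened claim that $\bigl[t^{\textbf{u}}\bigr]F_i = 0$ for every special exponent $\textbf{u}$. Solving the compatibility conditions for the $G_j$ yields the recursion $G_j = \bigl(\Sym^d M\bigr)(F_j) - \omega G_{j-1}$; feeding the inductive hypothesis on $F_0, \ldots, F_{i-1}$ into this recursion and using the coefficient-extraction formula of Lemma~\ref{lemma:whichcoeffs}, one checks that the coefficients of $G_{i-1}$ on every monomial inspected during the sweep in the proof of Proposition~\ref{prop:vanishingcoeffs} vanish. Reading off the relation $\bigl(\Sym^d M\bigr)(F_i) = G_i + \omega G_{i-1}$ on such a monomial $t^{\textbf{u}}$, the defect term $\omega\bigl[t^{\textbf{u}}\bigr]G_{i-1}$ then drops out, leaving $\bigl[t^{\textbf{u}}\bigr]\bigl(\Sym^d M\bigr)(F_i) = \bigl[t^{\textbf{u}}\bigr]G_i \in \O_C(V)$, which is exactly the input that drives the sweep. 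Rerunning the sweep verbatim gives $\bigl[t^{\textbf{u}}\bigr]F_i = 0$ for all special $\textbf{u}$, in particular $\bigl[t_r^d\bigr]F_i = 0$, and simultaneously propagates the vanishing back into $G_i$ so the induction continues. Then every $F_i$ vanishes at $[0:0:\cdots:0:1]$, which is therefore a common zero.

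The part I expect to be hardest is precisely this bookkeeping. The $G_j$ are never handed to us directly, only implicitly through the $F_j$ and the gluing relations, so each inductive step forces us to unwind the recursion $G_j = \bigl(\Sym^d M\bigr)(F_j) - \omega G_{j-1}$ and to verify that the powers of $\omega$ accumulated along the way never land on a monomial where they would contradict Lemma~\ref{lemma:trivialization}(iii). Aligning the order of the double induction --- over the index $i$ and over the special exponents $\textbf{u}$ --- with the one already used in the proof of Proposition~\ref{prop:vanishingcoeffs}, including its boundary case $k = d - 2$, is where the care is needed.
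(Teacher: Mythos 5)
Your proof takes essentially the same route as the paper's: a strong induction on $i$ in which one shows that the $\omega G_{i-1}$ defect vanishes on the monomials the sweep of Proposition~\ref{prop:vanishingcoeffs} actually inspects, and then reruns that sweep on $F_i$. The one substantive difference is that you carry the stronger inductive hypothesis that \emph{all} ``special'' coefficients of each $F_i$ vanish, whereas the paper tracks only the base-sweep coefficients $\bigl[t_r^{d-1}t_j\bigr]F_i$ and $\bigl[t_r^{d-1}t_j\bigr]G_i$ and passes a truncated polynomial to Proposition~\ref{prop:vanishingcoeffs}; your version is a clean reorganization, but the verification you flag as hardest --- that $\bigl[t^{\textbf{u}}\bigr]G_{i-1}$ vanishes on the inspected monomials, which rests on the observation that the set of special exponents $(0,\ldots,0,k,d-k-1)+\textbf{e}_j$ is closed under the shift relation of Lemma~\ref{lemma:whichcoeffs} --- is exactly where the remaining content sits, and your writeup stops short of carrying it out.
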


\begin{proof}
Using strong induction on $i$, we show that $\bigl[ t_r^{d-1} t_j \bigr] F_i = 0$ for every $0 \leq i \leq r$ and $i \leq j \leq r$, and $\bigl[ t_r^{d-1} t_j \bigr] G_i = 0$ for each $i+1 \leq j \leq r$. For the base case $i = 0$, we have $\bigl(\Sym^d M\bigr)(F_0) = G_0 \in \O_C(V)[t_0,t_1,\ldots,t_r]$. Hence Proposition~\ref{prop:vanishingcoeffs} implies that $\bigl[ t_r^{d-1} t_j \bigr] F_0 = 0$ for each $0 \leq j \leq r$. On the other hand, Lemma~\ref{lemma:whichcoeffs} gives us that for each $1 \leq j \leq r$, 
\begin{equation*}
\bigl[ t_r^{d-1} t_j \bigr] G_0 = \bigl[ t_r^{d-1} t_j \bigr] \bigl(\Sym^d M\bigr)(F_0) = \bigl[ t_r^{d-1} t_j \bigr] F_0 + \bigl[ t_r^{d-1} t_{j-1} \bigr] F_0.
\end{equation*}
We just proved that $\bigl[ t_r^{d-1} t_j \bigr] F_0 = 0$ for each $0 \leq j \leq r$, so we conclude $\bigl[ t_r^{d-1} t_j \bigr] G_0 = 0$ for each $1 \leq j \leq r$.

Now fix $1 \leq i \leq r-1$. Assume $\bigl[ t_r^{d-1} t_j \bigr] F_i = 0$ for each $i \leq j \leq r$, and $\bigl[ t_r^{d-1} t_j \bigr] G_i = 0$ for each $i+1 \leq j \leq r$. Since $\bigl(\Sym^d M\bigr)(F_{i+1}) = G_{i+1} + \omega G_i$, we obtain for each $i+1 \leq j \leq r$
\begin{equation*}
    \bigl[ t_r^{d-1} t_j \bigr] \bigl(\Sym^d M\bigr)(F_{i+1}) = \bigl[ t_r^{d-1} t_j \bigr] G_{i+1}.
\end{equation*}
Set $F = \sum_{j=i+1}^r t_r^{d-1} t_j \bigl[ t_r^{d-1} t_j \bigr] F_{i+1}$. We get $\bigl(\Sym^d M\bigr)(F) \in \O_C(V)[t_0,t_1,\ldots,t_r]$ and hence we can apply Proposition~\ref{prop:vanishingcoeffs} to see that $\bigl[ t_r^{d-1} t_j \bigr] F_{i+1} = 0$ for each $i+1 \leq j \leq r$. Similarly to the base case, Lemma~\ref{lemma:whichcoeffs} gives us that for each $i+2 \leq j \leq r$
\begin{equation*}
\bigl[ t_r^{d-1} t_j \bigr] G_{i+1} = \bigl[ t_r^{d-1} t_j \bigr] \bigl(\Sym^d M\bigr)(F) = \bigl[ t_r^{d-1} t_j \bigr] F + \bigl[ t_r^{d-1} t_{j-1} \bigr] F,
\end{equation*}
which is then also zero. This establishes the induction, which shows that $\bigl[ t_r^d \bigr] F_i = 0$ for each $0 \leq i \leq r$. Therefore all of the $F_i$ have a common zero at $[0:0:\dotsb:0:1]$.
\end{proof}

Using Proposition~\ref{prop:commonzero}, we show that the assumptions of Lemma~\ref{lemma:explicitcompconds} are impossible for $d > 1$, proving Theorem~\ref{intro:main1}.

\begin{theorem}[\ref{intro:main1}]
\label{thm:nonsplitnoendos}
Suppose $\E = \bigoplus_{i=1}^n \F_{r_i + 1} \otimes \L_i$, with $\L_i$ being degree zero line bundles and $r_i \geq 0$ for each $i$. If there is some $j$ such that $r_j \geq 1$, then the bundle $\PP(\E)$ has no surjective endomorphisms of degree greater than one on the fibres of $\pi$.
\end{theorem}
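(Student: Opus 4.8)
The proof is by contradiction via Proposition~\ref{prop:compatibilityconditions}. Suppose $\PP(\E)$ admits a surjective endomorphism $f$ of degree $d\geq 2$ on the fibres of $\pi$. After replacing $f$ by a suitable iterate we may assume a commutative diagram $(\ddagger)$ over $C$; by Proposition~\ref{prop:Atiyahpreserved}, $g^*\F_{r_i+1}\cong\F_{r_i+1}$, so $g^*\E\cong\bigoplus_i\F_{r_i+1}\otimes g^*\L_i$. Fix an open cover $\{U,V\}$ as in Lemma~\ref{lemma:trivialization}, refined so as to trivialize $\E$, $g^*\E$ and the degree zero line bundle $\B$ of Proposition~\ref{prop:fibreProp}, and compatibly with the two direct-sum decompositions, so that the transition matrices of $\E$ and $g^*\E$ are block diagonal with blocks $\tfrac{\omega}{L_i}N_i$ and $\gamma_i N_i$ respectively, where $N_i$ is the unipotent matrix of $\F_{r_i+1}$ of Section~\ref{sec:bundleprelim} and $\gamma_i$ is a transition function for $g^*\L_i$. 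Proposition~\ref{prop:compatibilityconditions} then yields, for every block index $i$ and position $0\le l\le r_i$, homogeneous degree $d$ polynomials $F_{i,l}\in\O_C(U)[t]$ and $G_{i,l}\in\O_C(V)[t]$ in the full set of variables, with the $F_{i,l}$ having no common zero in $\PP^r$, subject to the block-wise conditions $\beta\,(\Sym^d M)(F_{i,l})=\gamma_i\,(G_{i,l}+\omega G_{i,l-1})$, where $G_{i,-1}:=0$ and $\beta$ is a transition function for $\B$.

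Choose $j$ with $r_j=\max_i r_i$; then $s:=r_j\geq 1$ by hypothesis and $r_i\leq s$ for all $i$. As both transition matrices are block diagonal, the substitution $t_{i',l'}\mapsto 0$ for all $(i',l')$ outside block $j$ commutes with $\Sym^d M$; writing $\widehat F_{i,l},\widehat G_{i,l}$ for the induced polynomials in the block-$j$ variables $t_{j,0},\dots,t_{j,s}$ (still homogeneous of degree $d$, or zero) and cancelling the scalar $\tfrac{\omega}{L_j}$, we get for every $i$ and $0\le l\le r_i$ the relation $(\Sym^d N_j)(\widehat F_{i,l})=\nu_i\,(\widehat G_{i,l}+\omega\widehat G_{i,l-1})$, where $\nu_i:=(L_j/\omega)^d\,\gamma_i/\beta$ is a transition function on $\{U,V\}$ for the degree zero line bundle $\mathcal{N}_i:=\L_j^{-d}\otimes g^*\L_i\otimes\B^{-1}$. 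It suffices to show $\bigl[t_{j,s}^d\bigr]F_{i,l}=\bigl[t_{j,s}^d\bigr]\widehat F_{i,l}=0$ for all $i,l$: then every $F_{i,l}$ vanishes at the point of $\PP^r$ with $t_{j,s}=1$ and all other coordinates zero, contradicting Proposition~\ref{prop:compatibilityconditions}.

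Fix $i$ (assuming $\widehat F_{i,l}\neq 0$ for at least one $l$, else there is nothing to prove). The relations for that $i$ are exactly those in the hypothesis of Proposition~\ref{prop:commonzero} for the bundle $\F_{s+1}$ --- the proof of that proposition only needs the number of polynomials to be at most the number of variables, which holds since $r_i+1\le s+1$ --- apart from the twist by $\nu_i$. If $\mathcal{N}_i\cong\O_C$, write $\nu_i=h_V/h_U$ with $h_U\in\O_C(U)^\times$ and $h_V\in\O_C(V)^\times$; by $\O_C(U)$-linearity of $\Sym^d N_j$ and because $h_V\in\O_C(V)$, the polynomials $h_U\widehat F_{i,l}$ and $h_V\widehat G_{i,l}$ satisfy the untwisted relations of Proposition~\ref{prop:commonzero}, so $\bigl[t_{j,s}^d\bigr]\bigl(h_U\widehat F_{i,l}\bigr)=0$ and hence $\bigl[t_{j,s}^d\bigr]\widehat F_{i,l}=0$ since $h_U$ is a unit. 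If instead $\mathcal{N}_i\not\cong\O_C$, run the proofs of Propositions~\ref{prop:vanishingcoeffs} and~\ref{prop:commonzero}, but replace each use of Lemma~\ref{lemma:trivialization} inside the proof of Proposition~\ref{prop:vanishingcoeffs} by the elementary generalization that $\nu_i\,\O_C(V)\cap\O_C(U)=H^0(C,\mathcal{N}_i)=0$; concretely, processing coefficients in the order of Example~\ref{ex:vanishing}, once the previously treated coefficients (which multiply powers of $\omega$) are known to vanish, the relevant coefficient of $(\Sym^d N_j)(\widehat F_{i,0})$ equals a coefficient of $\widehat F_{i,0}$ lying in $\O_C(U)\cap\nu_i\O_C(V)=0$, and the passage to $l\geq 1$ is the $\omega$-eliminating induction of Proposition~\ref{prop:commonzero}. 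Either way $\bigl[t_{j,s}^d\bigr]\widehat F_{i,l}=0$ for all $l$, and letting $i$ range over all blocks finishes the proof.

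The step I expect to be the main obstacle is the behaviour of the auxiliary line bundle $\B$. For $\E=\F_{r+1}$ the bundle $\B$ was forced to be trivial by Lemma~\ref{lemma:Bistrivial}, but in general it need not be, and the compatibility conditions in different columns $(i,l)$ carry different twists $\gamma_i/\beta$ which cannot be normalized simultaneously. What saves the argument is that only the single block of largest rank is needed: block-diagonality of the transition matrices lets one discard every other block, and the surviving twist $\mathcal{N}_i$ obeys a clean dichotomy --- either it is trivial, in which case we are back in the situation of Propositions~\ref{prop:vanishingcoeffs} and~\ref{prop:commonzero}, or it is nontrivial, in which case the vanishing $H^0(C,\mathcal{N}_i)=0$ makes the coefficients die off even more quickly than in the untwisted case. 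The remaining care is in verifying that restricting to the chosen block respects the gluing relations (which uses only block-diagonality) and in re-running the bookkeeping of Proposition~\ref{prop:commonzero} with the factor $\nu_i$ carried along.
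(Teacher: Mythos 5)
Your proof is correct and follows essentially the same route as the paper: restrict the compatibility conditions to the block of maximal rank, reduce to a single degree-zero twist $\mathcal{N}_i$, and apply the coefficient-vanishing machinery of Propositions~\ref{prop:vanishingcoeffs} and~\ref{prop:commonzero} --- untwisted when $\mathcal{N}_i$ is trivial (after absorbing $\nu_i$ into the trivialization), and with $\O_C(U)\cap\nu_i\O_C(V)=H^0(C,\mathcal{N}_i)=0$ replacing Lemma~\ref{lemma:trivialization}(i) when it is not --- to force every $F_{i,l}$ to vanish at $[0:\cdots:0:1]$. The only cosmetic differences from the paper's proof are that you omit the WLOG normalization $\L_j\cong\O_C$ (hence carry the $\L_j^{-d}$ factor in $\mathcal{N}_i$ explicitly) and you cast the dichotomy as ``$\mathcal{N}_i$ trivial vs.\ nontrivial'' instead of the paper's ``all $[t_s^d]$-coefficients vanish vs.\ some do not''; both formulations lead to the same two sub-arguments.
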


\begin{proof}
Without loss of generality, assume $r_1 \geq 1$, $r_1 \geq r_i \geq 0$ for every $2 \leq i \leq n$, and $\L_1 = \O_C$. Suppose we are able to construct an endomorphism with degree greater than one on the fibres. We restrict our attention to the $i$th summand $\F_{r_i+1} \otimes \L_i$ of $\E$. Lemma~\ref{lemma:explicitcompconds} gives degree $d$ polynomials 
\begin{align*}
    F_0,\dotsc,F_{r_i} \in \O_C(U)[t_0,t_1,\dotsc,t_r] & & \text{and} & & G_0,\dotsc, G_{r_i} \in \O_C(V)[t_0,t_1,\dotsc,t_r] 
\end{align*}
where $r$ is the total rank of $\E$. By substituting in $t_j = 0$ for $r_1 < j \leq r$, we obtain polynomials in the variables $t_0,t_1,\ldots,t_{r_1}$ instead which we denote $\widetilde{F}_j$ and $\widetilde{G}_j$. The compatibility conditions from Lemma~\ref{lemma:explicitcompconds} become
\begin{equation*}
    \beta \bigl(\Sym^d M\bigr)(\widetilde{F}_j) = \begin{cases} \gamma_i \widetilde{G}_j & j = 0 \\ \gamma_i (\widetilde{G}_j + \omega \widetilde{G}_{j-1}) & 1 \leq j \leq r_i \text{ (if $r_i \neq 0$)} \end{cases}
\end{equation*}
where $\beta$ and $\gamma_i$ are the transition functions for $\pi^*\B$ and $g^* \L_i$ respectively. If $r_i = 0$, then  In particular, we obtain
\begin{equation*}
    \bigl[ t_0^d \bigr] \bigl(\Sym^d M\bigr)(\widetilde{F}_j) = \bigl[ t_0^d \bigr] \widetilde{F}_j = \begin{cases} \beta^{-1} \gamma_i \bigl[ t_0^d \bigr] \widetilde{G}_j & j = 0 \\ \beta^{-1} \gamma_i (\bigl[ t_0^d \bigr] \widetilde{G}_j + \omega \bigl[ t_0^d \bigr] \widetilde{G}_{j-1}) & 1 \leq j \leq r_i \text{ (if $r_i \neq 0$)} \end{cases}.
\end{equation*}
If $\bigl[ t_0^d \bigr]\widetilde{F}_i=0$ for each $i$ then we are done. Toward a contradiction, suppose we have $\bigl[ t_0^d \bigr] \widetilde{F}_j \neq 0$ for some minimal index $j$. This gives $\bigl[ t_0^d \bigr] \widetilde{F}_j = \beta^{-1} \gamma_i \bigl[ t_0^d \bigr] \widetilde{G}_j$ since $\bigl[ t_0^d \bigr] \widetilde{G}_{j-1} = 0$ by minimality of $j$. Hence we obtain a nonzero section of $\pi^* \B^{-1} \otimes g^* \L_i$, which means that this tensor product isomorphic to the trivial bundle. It follows that we can retroactively pick the transition function $\beta = \gamma_i$. Consequently, all of the compatibility conditions from Lemma~\ref{lemma:explicitcompconds} become equivalent to the compatibility conditions
\begin{align*}
    \bigl(\Sym^d M\bigr)(\widetilde{F}_0) = \widetilde{G}_0 & & \text{and} & & \bigl(\Sym^d M\bigr)(\widetilde{F}_j) = \widetilde{G}_j + \omega \widetilde{G}_{j-1}
\end{align*}
for $1 \leq j \leq r_i$ (as long as $r_i \neq 0$). Now there are two cases. If $r_i = 0$, we can apply Proposition~\ref{prop:vanishingcoeffs} to see that $\bigl[t_r^{d} \bigr] \widetilde{F}_0 = 0$, so that $\widetilde{F}_0$ has a zero at $[0:0:\cdots:0:1]$. On the other hand, if $r_i > 0$, then we can apply Proposition~\ref{prop:commonzero} to see that all of the $\widetilde{F}_j$ must have a common zero at $[0:0:\cdots:0:1] \in \PP^{r_1}$. In either case, the original $F_j$ must have a common zero at $[0:0:\cdots:0:1] \in \PP^r$. Since we only argue in terms of the first $r_1 + 1$ variables, we can apply this argument for every summand. This shows that every polynomial from Lemma~\ref{lemma:explicitcompconds} has a zero at $[0:0:\cdots:0:1]$, contradicting the existence of the surjective endomorphism with degree greater than one on the fibres of $\pi$. 
\end{proof}

\begin{remark}
This argument could be phrased using the representation theory of the unipotent group $\GG_a \subset \operatorname{SL}_2$. The transition matrix of $\F_r$ is of the same structure as the representation $\sym^r V_2$, where $V_2$ is the defining representation of $\GG_a$. By fixing a basis for $\PP^r$, the transition matrix defines a $\GG_a$-action on $\PP^r$, which yields a representation. Proposition~\ref{prop:compatibilityconditions} gives us that a surjective endomorphism of $\F_r$ of degree $d$ defines a $\GG_a$-equivariant morphism $\varphi \colon \sym^r V_2 \rightarrow \sym^d \sym^r V_2$. The Lie algebra of $\GG_a$ is generated by a single element $\partial_z$, and the image of $\sym^r V_2$ under $\varphi$ has to lie in the subspace of $\sym^d \sym^r V_2$ annihilated by $\partial_z^{r+1}$. All of the elements of this subspace must vanish at $[0:0:\cdots:0:1]$. 
\end{remark}

\section{Direct sums of line bundles}
\label{sec:splitcase}

In this section, let $X$ be any smooth projective variety. When $X$ is not an abelian variety, we make use of the Albanese morphism. The Albanese variety of a normal projective variety $X$ is an abelian variety $\operatorname{Alb}(X)$ together with a morphism $\alpha_X\colon X\rightarrow \operatorname{Alb}(X)$ that is initial among all morphisms from $X$ to an abelian variety. The desire to construct $\operatorname{Alb}(X)$ goes back to Weil and was viewed as a task of utmost importance. See \cite{kleiman2014picard} for a historical exposition and details of the construction. The Picard variety $\Pic^0(X)$, which is an abelian variety, is first constructed, and the Albanese variety $\operatorname{Alb}(X)$ is the dual abelian variety of $\Pic^0(X)$. 

We have two correspondences: An algebraically trivial line bundle on $X$ corresponds to a point on $\Pic^0(X)$, and additionally points on $\Pic^0(X)$ correspond to algebraically trivial line bundles on the dual abelian variety $\operatorname{Alb}(X)$. Therefore algebraically trivial line bundles on $\operatorname{Alb}(X)$ correspond to algebraically trivial line bundles on $X$ via pullback by $\alpha_X$; see \cite[Remark~5.25]{kleiman2014picard}.

Let $g\colon X\rightarrow X$ be a surjective morphism of a smooth projective variety. Recall that $\lambda_1(g)$ is the spectral radius of $g^*$ acting on $N^1(X)_\QQ=(\Pic(X)/\Pic^0(X))\otimes_\ZZ \QQ$. It is useful to relate $\lambda_1(g)$ to a notion of spectral radius for $g^* \colon \Pic^0(X) \rightarrow \Pic^0(X)$. For any line bundle $\L \in \Pic^0(X)$, let $V_\L \subset \Pic^0(X) \otimes \QQ$ be the sub-vector space spanned by $(g^*)^n \L$ for all $n \in \NN$.
\begin{proposition}\label{prop:TIRSize}
Let $g \colon X \rightarrow X$ be a surjective endomorphism with $X$ a normal projective variety over $\Qbar$ with surjective Albanese map $\alpha_X$. Let $g^* \colon \Pic^0(X) \rightarrow \Pic^0(X)$ denote the pullback map and $g^*_\QQ$ its extension to $\Pic^0(X) \otimes_\ZZ \QQ$. We have
\begin{enumerate}
    \item[(a)] For any line bundle $\L \in \Pic^0(X)$, the vector space $V_\L$ is finite dimensional.
    \item[(b)] If $\rho(g^*_\QQ, V_\L)$ denotes the spectral radius of $g^*_\QQ$ restricted to $V_\L$, then $\rho(g^*_\QQ, V_\L) \leq \sqrt{\lambda_1(g)}$.
\end{enumerate}
\end{proposition}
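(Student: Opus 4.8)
The plan is to transport the question to the abelian variety $\Pic^0(X)$, observe that $g^*$ acts on it as an isogeny, and then control the action by exploiting that a polarization is a \emph{positive-definite} Hermitian form. For the setup: since $g$ is surjective, the image of $\Alb(g)$ contains $\alpha_X(g(X)) = \alpha_X(X)$, which generates $\Alb(X)$ by the universal property; being the image of a homomorphism of abelian varieties, $\Alb(g)$ is then surjective, hence an isogeny. Dually, under the identification $\Pic^0(X) \cong \widehat{\Alb(X)}$ recalled above, $g^*\colon\Pic^0(X)\to\Pic^0(X)$ is the dual isogeny $\widehat{\Alb(g)}$; in particular it is an isogeny of the abelian variety $\Pic^0(X)$. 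Let $P(t)\in\ZZ[t]$ be its characteristic polynomial on $H_1(\Pic^0(X),\QQ)$, which is the same as that of $\Alb(g)^*$ on $H^1(\Alb(X),\QQ)\cong H^1(X,\QQ)$; it is monic of degree $2q$, where $q=\dim\Alb(X)$, and by the Cayley--Hamilton theorem for endomorphisms of abelian varieties it satisfies $P(g^*)=0$ in $\End(\Pic^0(X))$.

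Part (a) is then immediate: writing $P(t)=t^{2q}+c_{2q-1}t^{2q-1}+\dots+c_0$ and evaluating $P(g^*)=0$ at $\L$, one expresses $(g^*)^{2q}\L$ as a $\QQ$-combination of $\L, g^*\L,\dots,(g^*)^{2q-1}\L$, so inductively $V_\L$ is spanned by these, whence $\dim_\QQ V_\L\le 2q$. For part (b), the relation $P(g^*)=0$ shows $P(g^*)$ annihilates $V_\L$, so every eigenvalue of $g^*|_{V_\L}$ is a root of $P$, and therefore $\rho(g^*|_{V_\L})\le\rho(\Alb(g)^*\mid H^1(\Alb(X),\CC))$. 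It thus suffices to prove (i) $\rho(\Alb(g)^*\mid H^1(\Alb(X),\CC))\le\sqrt{\lambda_1(\Alb(g))}$ and (ii) $\lambda_1(\Alb(g))\le\lambda_1(g)$. Claim (ii) is formal: surjectivity of $\alpha_X$ makes $\alpha_X^*\colon N^1(\Alb(X))_\RR\hookrightarrow N^1(X)_\RR$ injective, and it intertwines $\Alb(g)^*$ with $g^*$ since $\alpha_X\circ g=\Alb(g)\circ\alpha_X$; restricting an operator to an invariant subspace cannot increase the spectral radius.

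For claim (i), set $B=\Alb(X)$ and $\phi=\Alb(g)$. Identify $H^{1,1}(B,\RR)$ with the space of Hermitian forms on the tangent space $\mathrm{Lie}(B)$; then $\phi^*$ acts by $H\mapsto H(d\phi\,\cdot,\,d\phi\,\cdot)$, where $d\phi$ is the tangent map, and $\rho(d\phi)$ equals the spectral radius in (i). Fix a polarization $\theta\in N^1(B)$, corresponding to a positive-definite Hermitian form $H_0$. Then $(\phi^*)^n\theta$ corresponds to the form $u\mapsto H_0(d\phi^n u, d\phi^n u)$, so for a fixed norm on $\mathrm{Lie}(B)$ its operator norm is at least $\sigma_{\min}(H_0)\,\|d\phi^n\|_{\mathrm{op}}^2\ge\sigma_{\min}(H_0)\,\rho(d\phi)^{2n}$. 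On the other hand $(\phi^*)^n\theta$ lies in $N^1(B)_\RR$, on which $\phi^*$ has spectral radius $\lambda_1(\phi)$, so $\limsup_n\|(\phi^*)^n\theta\|^{1/n}\le\lambda_1(\phi)$. Comparing the two growth rates forces $\rho(d\phi)^2\le\lambda_1(\phi)$, proving (i). Chaining the inequalities gives $\rho(g^*|_{V_\L})\le\sqrt{\lambda_1(\Alb(g))}\le\sqrt{\lambda_1(g)}$.

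The only genuinely non-formal point is claim (i): it is exactly where the Hodge-theoretic action of $g$ on $H^1$ must be compared with the purely numerical invariant $\lambda_1$, and the argument above does this using that an ample class on an abelian variety is positive-definite, so that its $\phi^*$-orbit cannot decay faster than $\rho(d\phi)^{2n}$. (Alternatively, claim (i) follows from the standard estimate $\rho(\phi^*\mid H^{p,q})\le\sqrt{\lambda_p(\phi)\lambda_q(\phi)}$ for surjective endomorphisms of smooth projective varieties, applied with $(p,q)=(1,0)$.)
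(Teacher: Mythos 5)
Your proof is correct and follows the same overall strategy as the paper: reduce to $\Alb(X)$ via the universal property, show that the characteristic polynomial $P$ of the pullback on $H^1$ annihilates $g^*$ on $\Pic^0(X)$ so that $V_\L$ is finite-dimensional with eigenvalues among the roots of $P$, and bound those roots by $\sqrt{\lambda_1}$ via Hodge theory. The two inner steps are implemented differently. To show $P(g^*)=0$, you invoke the Cayley--Hamilton theorem for endomorphisms of abelian varieties (via the faithful action on $H_1$ or the Tate module), whereas the paper constructs an explicit commuting square through the universal cover $W = H^1(X,\O_X)$ of $\Pic^0(X)$ and deduces $P(g^*)=0$ from $P(\psi_\CC|_W)=0$. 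More substantively, for the bound $\rho(d\phi)^2 \leq \lambda_1(\phi)$ you give a self-contained argument: identify $H^{1,1}$ with Hermitian forms, and use that a polarization $\theta$ is positive-definite to bound $\|(\phi^*)^n\theta\|$ below by a positive constant times $\rho(d\phi)^{2n}$, then compare with the upper bound $\lambda_1(\phi)^n$ coming from $\theta\in N^1$. The paper instead factors $P = Q\overline{Q}$ from the Hodge decomposition $H^1\otimes\CC = W\oplus\overline{W}$, notes the spectral radius on $H^{1,1}=W\otimes\overline{W}$ is $M^2$, and cites a result of Sano identifying this with $\lambda_1(g)$. Your argument buys self-containedness; the paper's is shorter given the external reference. (Your parenthetical alternative using the inequality $\rho(\phi^*\mid H^{p,q})\leq\sqrt{\lambda_p\lambda_q}$ with $(p,q)=(1,0)$ and $\lambda_0=1$ is closest in spirit to the paper's citation.)
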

\begin{proof}
Let $\alpha_X \colon X\rightarrow \textnormal{Alb}(X)$ be the projection. The surjection $\alpha_X$ induces an isomorphism
\begin{equation*}
    \alpha_X^* \colon \Pic^0 \bigl( \operatorname{Alb}(X) \bigr) \rightarrow \Pic^0(X)
\end{equation*}
on the level of $\ZZ$-modules. 
We also have a commuting diagram
\begin{equation*}
\begin{tikzcd}
    X \arrow[r, "g"] \arrow[d, "\pi" left] & X \arrow[d, "\pi"] \\ \textnormal{Alb}(X) \arrow[r,"h"] & \textnormal{Alb}(X)
\end{tikzcd}
\end{equation*}
by the universal property of the Albanese variety so that $\lambda_1(g)\geq \lambda_1(h)$. Thus we may assume that $X$ is smooth and, in fact, an abelian variety with $g\colon X\rightarrow X$ a surjective morphism. 

Let $\psi_\ZZ \colon H^1(X, \ZZ) \rightarrow H^1(X, \ZZ)$ be the pullback by $g$ acting on $H^1(X,\ZZ)$ and $P$ the characteristic polynomial of $\psi_\ZZ$. Since $P$ is a monic polynomial with integer coefficients, it makes sense to evaluate $P$ on any endomorphism of an abelian group. It follows that $P(\psi_\ZZ) = 0$, and we claim that $P(g^*) = 0$ as well.

Set $W = H^1(X, \O_X)$. From Hodge theory we get $H^1(X, \ZZ) \otimes_\ZZ \CC = W \oplus \overline{W}$ and the action of $\psi_\CC \coloneqq \psi_\ZZ \otimes_\ZZ \operatorname{Id}_\CC$ preserves $W$ and $\overline{W}$. Let $Q$ be the characteristic polynomial of $\psi_\CC$ restricted to $W$. The polynomial $Q$ is of degree $\dim(W)$ with complex coefficients. It follows that the characteristic polynomial of $\psi_\CC$ restricted to $\overline{W}$ is the conjugate $\overline{Q}$ and we have $P = Q \overline{Q}$.

The vector space $W$ is the universal cover of $\Pic^0(X)$, meaning we have a quotient map $\pi \colon W \rightarrow \Pic^0(X)$, and we have a commuting diagram of abelian groups
\begin{equation*}
\begin{tikzcd}
W \arrow[r, "\psi_\CC|_W" above] \arrow[d, "\pi" left] & W \arrow[d, "\pi" right] \\
\Pic^0(X) \arrow[r, "g^*" above] & \Pic^0(X).
\end{tikzcd}
\end{equation*}
Since $P(\psi_\ZZ) = 0$, we also have that $P(\psi_\CC|_W) = 0$ and hence by the commutativity of the diagram we get $P(g^*) = 0$. Therefore for any fixed $\L \in \Pic^0(X)$, the vector space $V_\L$ is in fact finitely generated by $(g^*)^n \L$ for $n = 1, 2, \ldots, \deg(P)$, proving (a).

To see (b), let $M$ denote the maximum modulus of the roots of $P$, and hence $Q$ and $\overline{Q}$ as well. Given any $\L \in \Pic^0(X)$, since $P(g^*) = 0$, we also have $P(g^*|_{V_\L}) = 0$ so that $\rho(g^*_\QQ, V_\L) \leq M$. Now because $X$ is an abelian variety, we have $H^{1,1}(X, \CC) = W \otimes \overline{W}$ and therefore the spectral radius of the pullback by $g^*$ acting on $H^{1,1}(X, \CC)$ is $M^2$. By \cite[Remark 5.8]{sano2017heights} we have that $\lambda_1(g)$ is equal to this spectral radius, so we get $\rho(g^*_\QQ, V_\L) \leq \sqrt{\lambda_1(g)}$ as desired. 
\end{proof}

Suppose $\L_0, \L_1, \ldots, \L_r$ are numerically trivial line bundles on $X$ with $\L_0=\O_X$ and set $\E = \bigoplus_{i=0}^r \L_i$. Suppose we have a morphism $f$ which yields a commutative diagram \eqref{eq:dagger}
\begin{equation*}
\begin{tikzcd}
\PP(\E) \arrow[r, "f"] \arrow[d, "\pi" left] & \PP(\E) \arrow[d, "\pi" right] \\
X \arrow[r, "g"] & X
\end{tikzcd}
\end{equation*}
and assume $f^*\O_{\PP(\E)}(1)\cong \O_{\PP(\E)}(d)\otimes \pi^* \B$ where $d \in \ZZ$ and $\B$ is a numerically trivial line bundle on $X$. Let $\{U_j\}$ be an open cover of $X$ trivializing $\E, g^* \E,$ and $\pi^* \B$, with $\alpha_{i,j,k}, \beta_{j,k}$, and $\gamma_{i,j,k}$ denoting the transition functions of $\L_i, \pi^* \B$, and $g^* \L_i$ from $U_j$ to $U_k$ respectively. The conditions of Proposition~\ref{prop:compatibilityconditions} gives the polynomials $F_{0,j}, \ldots, F_{r,j} \in \O_X(U_j)[t_0,t_1,\ldots,t_r]$ such that for any $i$,
\begin{equation*}
    \beta_{j,k} \bigl(\Sym^d M\bigr)(F_{i,j}) = \beta_{j,k} F_{i,j}(\alpha_{0,j,k} t_0, \ldots, \alpha_{r,j,k} t_r) = \gamma_{i,j,k} F_{i,k}(t_0, \ldots, t_r)
\end{equation*}
for all pairs $j,k$. Fixing a degree vector $\textbf{u} \in \NN^{r+1}$, we can compare coefficients to obtain
\begin{equation*}
    \beta_{j,k} \left( \prod_{\ell=0}^r \alpha_{\ell,j,k} \right) \bigl[ t^{\textbf{u}} \bigr] F_{i,j} = \gamma_{i,j,k} \bigl[ t^{\textbf{u}} \bigr] F_{i,k}.
\end{equation*}
In other words, this gives a global section of the line bundle
\begin{equation*}
g^* \L_i \otimes \pi^* \B^{-1} \otimes \left(\bigotimes_{\ell=0}^r \L_\ell^{\otimes-d_i} \right).
\end{equation*}
In particular, this global section is nonzero if and only if $\bigl[ t^{\textbf{u}} \bigr] F_{i,j} \neq 0$ for some $j$. Consequently, in order for $\PP(\E)$ to have complicated endomorphisms, these line bundles are forced to have interesting collections of global sections. However, if all of $\pi^* \B, \L_0,\ldots, \L_r$ are numerically trivial, then they have no nonzero global sections. It follows that if we ever had $\bigl[ t^{\textbf{u}} \bigr] F_{i,j} \neq 0$ for some $j$, it would force the relation
\begin{align}
\label{eq:keyRel}
g^* \L_i \otimes \pi^* \B^{-1} \otimes \left(\bigotimes_{\ell=0}^r \L_\ell^{\otimes-d_i} \right)=\O_X.
\end{align}
This relation heavily restricts the degree on the fibres of an endomorphism.

\begin{proposition}
\label{prop:spectralradius}
Let $X$ be a normal projective variety over a number field $\KK$ with surjective Albanese map. Let $\L_0, \L_1, \ldots, \L_r$ be numerically trivial line bundles on $X$ with $\L_0=\O_X$ and assume $\L_1$ is non-torsion. Set $\E = \bigoplus_{i=0}^r \L_i$ and let $V$ be the vector space spanned by $\pi^* \B, \L_0, \L_1, \ldots, \L_r$ in $\Pic^0(X)_\QQ$. Whenever we have a commutative square \eqref{eq:dagger} and $f^*\O_{\PP(\E)}(1) \equiv_{\textnormal{lin}} \O_{\PP(\E)}(d) \otimes \pi^* \B$ for some numerically trivial line bundle $\B$, we have that $d = \rho(g^*_\QQ, V)$, where $\rho(g^*_\QQ, V)$ is the spectral radius of $g^*$ restricted to $V$.
\end{proposition}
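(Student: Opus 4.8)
The plan is to exploit the relation \eqref{eq:keyRel} just derived, together with a direct analysis of the action of $g^*$ on $V$, establishing the inequalities $\rho(g^*_\QQ, V) \le d$ and $\rho(g^*_\QQ, V) \ge d$ in turn. First I would normalize so that $\L_0 = \O_X$; tensoring $\E$ by $\L_0^\vee$ changes neither $\PP(\E)$ nor any hypothesis. Proposition~\ref{prop:compatibilityconditions} (equivalently, the computation preceding the statement) then provides, over each chart $U_j$, degree $d$ forms $F_{0,j}, \dots, F_{r,j} \in \O_X(U_j)[t_0,\dots,t_r]$ with no common zero in $\PP^r$. Two elementary facts will be used repeatedly: no $F_{i,j}$ vanishes identically, since at most $r$ forms in $r+1$ variables always share a zero; and for every $j$ the monomial $t_j^d$ must appear in some $F_{i,j}$, for otherwise all of $F_{0,j}, \dots, F_{r,j}$ vanish at the $j$-th coordinate point of $\PP^r$. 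By \eqref{eq:keyRel}, whenever a monomial $t^{\textbf{u}}$ with $|\textbf{u}| = d$ appears in some $F_{i,j}$ one gets $g^*[\L_i] = [\B] + \sum_{\ell} u_\ell [\L_\ell]$ in $\Pic^0(X)_\QQ$. Taking $i = 0$, where $g^*[\L_0] = 0$, shows $[\B] \in V$; hence $V$ is the span of $[\L_1], \dots, [\L_r]$, is finite-dimensional, and the displayed relations show it is $g^*_\QQ$-invariant, so that $\rho(g^*_\QQ, V)$ is defined.

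For $\rho(g^*_\QQ, V) \le d$, I would apply the same machinery to the iterate $f^n$, which sits in a commutative square over $g^n$, is degree $d^n$ on the fibres, and satisfies $(f^n)^*\O_{\PP(\E)}(1) \equiv_{\textnormal{lin}} \O_{\PP(\E)}(d^n) \otimes \pi^*\B_n$ with $[\B_n] = \sum_{k=0}^{n-1} d^{n-1-k}(g^k)^*[\B]$ still numerically trivial (this comes from composing the identity $f^*\O_{\PP(\E)}(1) \equiv_{\textnormal{lin}} \O_{\PP(\E)}(d) \otimes \pi^*\B$ with itself $n$ times). As before each $F^{(n)}_{i,j}$ is nonzero, so for every $i$ there is $\textbf{v}^{(i)} \in \NN^{r+1}$ with $|\textbf{v}^{(i)}| = d^n$ and $(g^*)^n[\L_i] = [\B_n] + \sum_\ell v^{(i)}_\ell[\L_\ell]$; subtracting the $i = 0$ identity cancels $[\B_n]$ and leaves $(g^*)^n[\L_i] = \sum_\ell (v^{(i)}_\ell - v^{(0)}_\ell)[\L_\ell]$, a combination of the fixed vectors $[\L_1], \dots, [\L_r]$ whose coefficients sum in absolute value to at most $2d^n$. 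Fixing a norm on $V$, this yields $\|(g^*_\QQ|_V)^n\| = O(d^n)$, hence $\rho(g^*_\QQ, V) \le d$.

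For $\rho(g^*_\QQ, V) \ge d$, I would choose, for each $j \in \{0, \dots, r\}$, an index $\sigma(j)$ with $t_j^d$ appearing in some $F_{\sigma(j),\bullet}$, so that $g^*[\L_{\sigma(j)}] = [\B] + d[\L_j]$ for all $j$, and analyze the self-map $\sigma$ of $\{0, \dots, r\}$ through its cycles. If some cycle $j_0 \to j_1 \to \dots \to j_{m-1} \to j_0$ has the classes $[\L_{j_t}]$ not all equal, then a discrete Fourier argument produces an $m$-th root of unity $\mu \ne 1$ with $w \coloneqq \sum_t \mu^t [\L_{j_t}] \ne 0$ in $V \otimes_\QQ \CC$, and using $\sum_t \mu^t = 0$ one computes $g^* w = d\mu\, w$, so $g^*_\QQ$ has an eigenvalue of modulus $d$ and $\rho(g^*_\QQ, V) \ge d$. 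Otherwise every cycle $C$ is constant, with value $\ell_C$ satisfying $(g^* - d)\ell_C = [\B]$; unequal values among the $\ell_C$ differ by an eigenvector of eigenvalue $d$, so we may assume they all equal a single class $\ell$. For any $j$, following its forward $\sigma$-orbit to the cycle it enters — on which all classes equal $\ell$ — and then running the relations $g^*[\L_{\sigma(j')}] = [\B] + d[\L_{j'}]$ and $g^*\ell = [\B] + d\ell$ backwards along that orbit forces $[\L_{j'}] = \ell$ at each step, so $[\L_j] = \ell$. Taking $j = 0$ gives $\ell = [\L_0] = 0$, and then $[\L_1] = \ell = 0$, contradicting that $\L_1$ is non-torsion; hence this case does not occur.

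Combining the two inequalities gives $d = \rho(g^*_\QQ, V)$. The bookkeeping with \eqref{eq:keyRel} and the norm estimate in the second step are routine; I expect the main obstacle to be the subcase of the third step where every cycle of $\sigma$ is constant, in which one must follow the relations $g^*[\L_{\sigma(j)}] = [\B] + d[\L_j]$ around the $\sigma$-orbits with care and make essential use of both the normalization $\L_0 = \O_X$ and the hypothesis that $\L_1$ is non-torsion.
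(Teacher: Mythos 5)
Your proof is correct, and it takes a genuinely different route from the paper's. The paper proves $d = \rho(g^*_\QQ, V)$ in a single stroke: starting from \eqref{eq:keyRel}, it builds a chain of indices $p_0, p_1, \ldots, p_{r+1}$ (with $t_{p_{j-1}}^d$ appearing in $F_{p_j}$), invokes the pigeonhole principle to find a repetition $p_j = p_{j+\ell}$, and extracts a polynomial identity in $g^*$ all of whose roots have modulus $d$; non-torsion of $\L_1$ enters by ruling out the branch where $\L_1$ would be forced to be torsion. You instead establish the two inequalities separately: the upper bound $\rho(g^*_\QQ, V) \leq d$ comes from iterating $f$ to $f^n$ (so the fibre degree becomes $d^n$) and observing that the coefficients in $(g^*_\QQ)^n[\L_i] = \sum_\ell (v^{(i)}_\ell - v^{(0)}_\ell)[\L_\ell]$ have $\ell^1$-norm at most $2d^n$, giving an operator-norm bound and hence the spectral radius bound via Gelfand's formula; the lower bound $\rho(g^*_\QQ, V) \geq d$ comes from a discrete-Fourier construction of a modulus-$d$ eigenvector along a cycle of your auxiliary self-map $\sigma$ of $\{0,\ldots,r\}$, with the normalization $\L_0 = \O_X$ and the non-torsion of $\L_1$ killing the degenerate subcase where every $\sigma$-cycle carries one and the same class. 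Both proofs run on the same engine — relation \eqref{eq:keyRel} plus the pigeonhole/cycle phenomenon on the index set — but your decomposition into two bounds is more transparent: the paper's divisibility observation (that the characteristic polynomial has a factor with roots of modulus $d$) on its face only yields $\rho \geq d$, whereas your iteration argument supplies the matching $\rho \leq d$ explicitly, and your eigenvector construction is more concrete than the paper's appeal to a polynomial relation on the subgroup $G$.
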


\begin{proof}
Our first goal is to show that $\pi^* \B$ is torsion. Let $\{U_i\}$ be any open cover of $X$ trivializing $\E$. By Proposition~\ref{prop:compatibilityconditions}, we obtain polynomials over the first open set
\begin{equation*}
    F_0, F_1, \ldots, F_r \in \O_X(U_0)[t_0, t_1, \ldots, t_r]_d,
\end{equation*} 
which do not share a common zero in $\PP^r$. Hence, the monomial $t_0^d$ appears with nonzero coefficient in $F_{p_0}$ for some index $0 \leq p_0 \leq r$, because otherwise the polynomials $F_0, F_1, \ldots, F_r$ would have a common zero. It follows by equation \eqref{eq:keyRel} that
\begin{equation*}
g^* \L_{p_0} \otimes \pi^* \B^{-1} \otimes \L_0^{-d} = g^* \L_{p_0} \otimes \pi^* \B^{-1} = \O_X.
\end{equation*}
In other words, we have $\pi^* \B = g^* \L_{p_0}$. If $\L_{p_0}$ is trivial, then so is $\pi^* \B$, so assume $\L_{p_0}$ is non-trivial. We have that $t_{p_0}^d$ appears in $F_{p_1}$ for some $0 \leq p_1 \leq r$ because otherwise the $F_j$ would have a common zero. Therefore by equation \eqref{eq:keyRel} we have 
\begin{equation*}
    g^* \L_{p_1} \otimes \pi^* \B^{-1} \otimes \L_{p_0}^{-d} = g^* \L_{p_1} \otimes g^* \L_{p_0}^{-1} \otimes \L_{p_0}^{-d} = \O_X
\end{equation*}
so that 
\begin{align}
\label{eq:lbbase}
    g^* \L_{p_1} = g^* \L_{p_0} \otimes \L_{p_0}^d.
\end{align}
In general, for $0 < j \leq r+1$, set $p_j$ to be any index for which $\bigl[ t_{p_{j-1}}^d \bigr] F_{p_j} \neq 0$, so that we have the relation $g^* \L_{p_j} \otimes g^* \L_{p_0}^{-1} \otimes \L_{p_{j-1}}^{-d} = \O_X$ and hence 
\begin{align}
\label{eq:lbgen}
    g^* \L_{p_j} = g^* \L_{p_0} \otimes \L_{p_{j-1}}^d.
\end{align}
Using induction on $j$, we see that
\begin{align}
\label{eq:generalLpj}
    (g^*)^j \L_{p_j} = \bigotimes_{k=0}^j (g^*)^{j-k} \L_{p_0}^{d^k}.
\end{align}
Let $G$ be the subgroup of $\Pic^0(X)$ generated by $\pi^* \B = g^* \L_{p_0}, \L_0, \L_1, \ldots, \L_r$ so that $V = G \otimes \QQ$. It follows by equations \eqref{eq:lbbase} and \eqref{eq:lbgen} that $g^*|_G$ is an endomorphism of $G$. By the pigeonhole--principle, since there are $r+2$ indices $p_j$ and $r+1$ indices $j$, we must have that $p_j = p_{j+\ell}$ for some $0 \leq j \leq r$ and $\ell > 0$. Hence equation \eqref{eq:generalLpj} yields
\begin{equation*}
    (g^*)^\ell (g^*)^j \L_{p_j} = \bigotimes_{k=0}^j (g^*)^{j+\ell-k} \L_{p_0}^{d^k} = \bigotimes_{k=0}^{j+\ell} (g^*)^{j+\ell-k} \L_{p_0}^{d^k} = (g^*)^{j+\ell} \L_{p_{j+\ell}}.
\end{equation*}
Therefore after canceling equal terms, we see that $\L_{p_0}$ is in the kernel of $\sum_{k=j+1}^{j+\ell} d^k (g^*)^{j+\ell-k}$ viewed as a morphism on $G$. If this morphism is an isogeny on all of $\Pic^0(X)$, then since the kernel of every isogeny is finite, we obtain that $\L_{p_0}$ is torsion, and hence so is $\pi^* \B$. On the other hand, if $\sum_{k=j+1}^{j+\ell} d^k (g^*)^{j+\ell-k}$ is not an isogeny and $\L_{p_0}$ is non-trivial, then after rewriting, $g^*$ satisfies
\begin{equation*}
    \sum_{k=0}^{\ell-1} d^{j+\ell-k} (g^*)^k = 0
\end{equation*}
on $G$. In other words, in the vector space $V = G \otimes \QQ$, the characteristic polynomial of $g^*_\QQ$ restricted to $V$ is divisible by the polynomial $\sum_{k=0}^{\ell-1} d^{j+\ell-k} x^k = 0$, whose roots are of magnitude $d$. To see this, observe that
\begin{equation*}
    \sum_{k=0}^{\ell-1} d^{j+\ell-k} x^k = d^{j+1} \sum_{k=0}^{\ell-1} d^{\ell-1-k} x^k = d^{j+1} \frac{x^\ell - d^\ell}{x-d}.
\end{equation*}
Therefore in this case we get $d = \rho(g^*_\QQ, V)$ which proves the proposition. Hence we may assume $\pi^* \B = g^* \L_{p_0}$ is torsion. Now we perform a similar argument by starting with the monomial $t_1^d$ instead. We must have some $0 \leq q_0 \leq r$ such that $\bigl[ t_1^d \bigr] F_{q_0} \neq 0$, so that
\begin{equation*}
g^* \L_{q_0} \otimes \pi^* \B^{-1} \otimes \L_1^{-d} = \O_X.
\end{equation*}
More generally for each $0 < j \leq r+1$ we set $q_j$ to be any index such that $\bigl[ t_{q_j}^d \bigr] F_{q_{j-1}} \neq 0$ and consequently
\begin{equation*}
g^* \L_{q_j} \otimes \pi^* \B^{-1} \otimes \L_{q_{j-1}}^{-d} = \O_X.
\end{equation*}
Restricting to the subgroup $G$, we can tensor by $\QQ$ to kill the torsion. In particular, we are assuming $\pi^* \B$ is torsion, and so we obtain
\begin{align*}
    & g^* \L_{q_0} = \L_1^d & & \text{and} & & g^* \L_{q_j} = \L_{q_{j-1}}^d.
\end{align*}
Induction on $j$ yields $(g^*)^j \L_{q_j} = \L_1^{d^j}$. Again by the pigeonhole--principle, we must have $q_j = q_{j+\ell}$ for some $j,\ell$, and we obtain
\begin{equation*}
    (g^*)^\ell (g^*)^j \L_{q_j} = (g^*)^\ell \L_1^{d^j} = \L_1^{d^{j+\ell}} = (g^*)^{j+\ell} \L_{q_{j+\ell}}.
\end{equation*}
This implies that the line bundle $\L_1$ lies in the kernel of the morphism $d^j (g^*)^{\ell} - d^{j+\ell}$. This morphism cannot be an isogeny because that would imply $\L_1$ is torsion, and hence the characteristic polynomial of $g^*_\QQ$ restricted to $V$ is divisible by $d^j (x^\ell - d^\ell)$. It follows that $d = \rho(g^*_\QQ, V)$ as desired. 
\end{proof}

These tools allow one to prove Theorem~\ref{intro:main2} in the case where at least one of the line bundles is non-torsion.

\begin{theorem}[\ref{intro:main2}]
\label{thm:mainThmAnontorsion}
Let $X$ be a smooth projective variety over $\Qbar$ such that its Mori cone is generated by finitely many numerical classes of curves. Fix $\L_0, \L_1,\ldots, \L_r$ to be numerically trivial line bundles on $X$ with $\L_1$ non-torsion and set $\E = \bigoplus_{i=0}^r \L_i$. Suppose that there is a diagram
\begin{equation*}\xymatrix{\PP(\E)\ar[r]^{f}\ar[d]_\pi &\PP(\E)\ar[d]^\pi\\ X\ar[r]_g & X}
\end{equation*}
with $f$ and $g$ surjective. Then the degree of $f$ on the fibres of $\pi$ is at most $\lambda_1(g)$ and $\lambda_1(f)=\lambda_1(g)$.
\end{theorem}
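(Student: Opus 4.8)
The plan is to reduce the theorem to the single estimate $d \le \lambda_1(g)$, where $d$ denotes the degree of $f$ on the fibres of $\pi$ (equivalently $d = \lambda_1(f\vert_\pi)$), and then to obtain this estimate by combining Propositions~\ref{prop:fibreProp}, \ref{prop:spectralradius} and~\ref{prop:TIRSize}. Since $\PP(\E) \cong \PP(\E \otimes \M)$ for any line bundle $\M$, I would first normalize so that $\L_0 = \O_X$. By Proposition~\ref{prop:fibreProp} we have $f^*\O_{\PP(\E)}(1) \lin \O_{\PP(\E)}(d) \otimes \pi^*\B$ for some line bundle $\B$ on $X$, and by \cite[Theorem~2.2.2]{lesieutresatriano2021ksc} we have $\lambda_1(f) = \max\{\lambda_1(g), \lambda_1(f\vert_\pi)\} = \max\{\lambda_1(g), d\}$; so once $d \le \lambda_1(g)$ is known, both conclusions of the theorem follow at once.

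The first substantive point is that $\B$ is numerically trivial. Since each $\L_i$ is numerically trivial, every positive-degree Chern class of $\E$ vanishes in $N^{\bullet}(X)_\QQ$, so the Grothendieck relation collapses to $\xi^{\,r+1} = 0$, where $\xi = c_1(\O_{\PP(\E)}(1))$. Applying the ring homomorphism $f^*$ gives $(d\xi + \pi^*\B)^{r+1} = 0$. Pushing this forward along $\pi$ and using $\pi_*\xi^{\,r} = 1$ together with the vanishing of $\pi_*\xi^{\,m}$ for $0 \le m \le r+1$, $m \ne r$ (for $m < r$ by dimension, for $m = r+1$ because the relevant Segre class of $\E$ vanishes), leaves $(r+1)\,d^{\,r}\,\B = 0$ in $N^1(X)_\QQ$; as $d \ge 1$, this forces $\B$ numerically trivial. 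One can argue this cone-theoretically instead: $X$ --- hence $\PP(\E)$ --- has finitely generated Mori cone, so $\operatorname{Nef}(\PP(\E))$ is polyhedral with $\O_{\PP(\E)}(1)$ spanning an extremal ray, and $f^*$ --- an automorphism of the nef cone because $f$ is finite --- must send that ray to itself since $d \ge 1$, again giving $\B$ numerically trivial.

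With $\B$ numerically trivial, $\L_0 = \O_X$, and $\L_1$ non-torsion, Proposition~\ref{prop:spectralradius} applies and gives $d = \rho(g^*_\QQ, V)$, where $V \subseteq \Pic^0(X)_\QQ$ is the finite-dimensional, $g^*$-invariant subspace spanned by $\B, \L_0, \ldots, \L_r$. From the proof of that proposition $\B$ lies in the $g^*$-orbit of some $\L_{p_0}$, and each $\L_i$ lies in the $g^*$-invariant subspace $V_{\L_i}$ spanned by its forward $g^*$-orbit, so $V \subseteq \sum_{i=1}^{r} V_{\L_i}$. The spectral radius of an operator on a sum of invariant subspaces is at most the maximum of the spectral radii on the summands, so Proposition~\ref{prop:TIRSize}(b) yields $d = \rho(g^*_\QQ, V) \le \max_i \rho(g^*_\QQ, V_{\L_i}) \le \sqrt{\lambda_1(g)}$. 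Since $\lambda_1(g) \ge 1$, this gives the (slightly stronger) bound $d \le \sqrt{\lambda_1(g)} \le \lambda_1(g)$, and therefore $\lambda_1(f) = \max\{\lambda_1(g), d\} = \lambda_1(g)$.

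The main obstacle I anticipate is matching hypotheses: Propositions~\ref{prop:spectralradius} and~\ref{prop:TIRSize} are stated for varieties with surjective Albanese map, which is not assumed here, so one must first reduce to that case. This should be possible since $\B$ and all the $\L_i$ lie in $\Pic^0(X) \cong \Pic^0(\Alb(X))$ and $g$ induces an endomorphism of $\Alb(X)$ whose first dynamical degree is at most $\lambda_1(g)$; the point is to carry out this transfer without losing control of $\lambda_1(g)$. Everything else --- the twist to $\L_0 = \O_X$, the vanishing of $\B$, the containment $V \subseteq \sum V_{\L_i}$, and the final inequality --- is routine once this bookkeeping is set up.
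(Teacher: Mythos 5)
Your proposal is correct and follows the same overall skeleton as the paper (normalize $\L_0 = \O_X$, use Proposition~\ref{prop:fibreProp} and the product formula to reduce to $d \le \lambda_1(g)$, then combine Propositions~\ref{prop:spectralradius} and~\ref{prop:TIRSize}). The genuine difference is the step showing $\B$ numerically trivial. You obtain it from the Grothendieck relation: since every $\L_i$ is numerically trivial, all positive-degree Chern classes of $\E$ vanish in $N^{\bullet}(X)_\QQ$ (numerically trivial divisors are $\QQ$-homologically trivial, so their products vanish), giving $\xi^{r+1} = 0$; applying $f^*$, expanding, and pushing forward by $\pi_*$ leaves $(r+1)d^r \B \NUM 0$. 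This is a short, self-contained Chern-class computation that needs no iteration and does not invoke the Mori cone hypothesis at this step. The paper instead cites Misra's description of $\operatorname{Nef}(\PP(\E))$ and iterates $f$ so that $f^*$ fixes the extremal ray of $\O_{\PP(\E)}(1)$; your cone-theoretic aside is a sharper version of the same idea, since one can see directly from $d \ge 1$ that $f^*\O_{\PP(\E)}(1)$ must lie on the $\O_{\PP(\E)}(1)$ ray rather than merely on some extremal ray. Two small remarks. First, your decomposition $V \subseteq \sum_i V_{\L_i}$ is fine but a bit roundabout: the integer polynomial $P$ constructed in the proof of Proposition~\ref{prop:TIRSize} annihilates $g^*$ on all of $\Pic^0(X)$, so $\rho(g^*_\QQ, W) \le \sqrt{\lambda_1(g)}$ for any finite-dimensional $g^*$-invariant $W$, in particular $W = V$. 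Second, you are right to flag the ``surjective Albanese'' hypothesis in Propositions~\ref{prop:spectralradius} and~\ref{prop:TIRSize}; the paper's proof of this theorem applies those propositions without checking that hypothesis either, so this is a shared loose end rather than a defect of your argument (it can be repaired, e.g.\ by factoring $\alpha_X$ through its image, or by bounding the spectral radius of $g^*$ on $H^{0,1}$ directly).
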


The first assumption on $X$ is guaranteed if $X$ is a Fano variety.

\begin{proof}
After twisting by $\L_0^{-1}$, we may assume that $\L_0 = \O_X$. By \cite[Corollary 3.4]{misra2022nefcones}, the nef cone of $\PP(\E)$ is generated by $\O_{\PP(\E)}(1)$ and $\pi^* \operatorname{Nef}(X)$. After iterating $f$, we may assume that we have a diagram 
\begin{align}
\begin{tikzcd}[ampersand replacement = \&]
\PP(\E) \arrow[r, "f"] \arrow[d, "\pi" left] \& \PP(\E) \arrow[d, "\pi" right] \\
X \arrow[r, "g"] \& X
\end{tikzcd}
\tag{$\dagger$}
\end{align}
and that $f^*\O_{\PP(\E)}(1) \equiv_{\text{num}} \O_{\PP(\E)}(d)$. This means that we have a numerically trivial line bundle $\B$ with $f^*\O_{\PP(\E)}(1) \equiv_{\text{lin}} \O_{\PP(\E)}(d)\otimes \pi^* \B$ for some $d \geq 1$. Hence by Proposition~\ref{prop:spectralradius} and Proposition~\ref{prop:TIRSize} we have that
\[\lambda_1(f|_\pi) = d = \rho(g^*_\QQ, V) \leq \sqrt{\lambda_1(g)}.\]
Therefore we have
\[\lambda_1(f)=\max\{\lambda_1(g),\lambda_1(f\mid_\pi)\}=\lambda_1(g)\]
as desired.
\end{proof}

\subsection{Kawaguchi--Silverman Conjecture}

Theorem~\ref{thm:mainThmAnontorsion}, together with Corollary~\ref{cor:standard1}, resolves the \hyperref[conj:KSC]{KS}~Conjecture when at least one of the line bundles is non-torsion. When $\E$ is a direct sum of torsion line bundles, we require a different approach. Before proceeding, we sketch the idea. For simplicity, assume that the base variety $X$ is abelian, and $\L_0,\ldots \L_r$ are torsion line bundles on $X$. Let $N$ be chosen so that $L_i^{\otimes N}\cong \O_X$. Let $\E=\bigoplus_{i=0}^r\L_i$ and denote $[N]\colon X\rightarrow X$ as multiplication by $N$. We have a surjective pullback morphism $X\times \PP^r\cong \PP([N]^*\E)\rightarrow \PP(\E)$ where the first isomorphism is due to our choice of $N$. We now lift a surjective morphism $f\colon \PP(\E)\rightarrow \PP(\E)$ to a morphism $h\colon \PP([N]^*\E)\rightarrow \PP([N]^* \E)$  and obtain a diagram
\begin{equation*}
\begin{tikzcd}
    \PP([N]^*\E) \arrow[r, "h"] \arrow[d] & \PP([N]^*\E)\arrow[d] \\ \PP(\E)\arrow[r, "f"] & \PP(\E).
\end{tikzcd}
\end{equation*}
The Kawaguchi--Silverman conjecture is known for $ \PP([N]^*\E)=\PP^r\times X $. Furthermore, since $f$ is dominated by $h$ this allows one to deduce the Kawaguchi--Silverman conjecture for $f$. We reduce the general case to this one. We now make this precise.

\begin{lemma}\label{lemma:nsquaredlemma}
Let $A$ be an abelian variety defined over $\overline{\QQ}$ and $f\colon A \rightarrow A$ a surjective endomorphism. For any integer $n$, there is a surjective endomorphism $f^\prime \colon A \rightarrow A$ such that $f\circ [n^2]=[n^2]\circ f^\prime$, where $[n^2]$ is the multiplication by $n^2$ map.
\end{lemma}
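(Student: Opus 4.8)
The plan is to combine the structure theory of morphisms of abelian varieties with the divisibility of $A(\Qbar)$.

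First I would write $f$ in standard form: setting $c \coloneqq f(0)$, the morphism $\tau_{-c} \circ f$ fixes the origin, so by the rigidity lemma it is a homomorphism of abelian varieties $\phi \colon A \to A$, and hence $f = \tau_c \circ \phi$. Since $f$ is surjective and $\tau_c$ is an automorphism, $\phi = \tau_{-c} \circ f$ is a surjective homomorphism; its kernel is a proper closed subgroup, so by comparing the dimensions of source and image, $\phi$ is an isogeny. This reduces everything to finding the translation part of $f'$.

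Next, since $\Qbar$ is algebraically closed, multiplication by any nonzero integer is surjective on $A(\Qbar)$, so (for $n \neq 0$, which is the case of interest) I may pick a point $c' \in A(\Qbar)$ with $[n^2] c' = c$. Define $f' \coloneqq \tau_{c'} \circ \phi$. Then $f'$ is surjective, being the composite of the isogeny $\phi$ with the translation isomorphism $\tau_{c'}$, and for every $x \in A$ we compute
\[
[n^2]\bigl(f'(x)\bigr) = [n^2]\bigl(\phi(x) + c'\bigr) = \phi\bigl([n^2]x\bigr) + [n^2]c' = \phi\bigl([n^2]x\bigr) + c = f\bigl([n^2]x\bigr),
\]
using that $\phi$ is a homomorphism and $[n^2]c' = c$. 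This is precisely the identity $[n^2] \circ f' = f \circ [n^2]$.

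I do not expect any serious obstacle: the only inputs are the decomposition $f = \tau_c \circ \phi$ and the surjectivity of $[n^2]$ on $\Qbar$-points, both standard facts about abelian varieties. It is worth noting that the exponent $2$ plays no role in the argument — the same construction lifts $f$ along $[n]$ — but the stated form with $[n^2]$ is the one convenient for the later application.
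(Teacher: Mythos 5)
Your proof is correct and follows exactly the same route as the paper's: decompose $f = \tau_c \circ \phi$ with $\phi$ an isogeny, lift $c$ along $[n^2]$ using divisibility of $A(\Qbar)$, and set $f' = \tau_{c'} \circ \phi$. You simply spell out the rigidity-lemma justification for the decomposition and the surjectivity of $[n^2]$ on $\Qbar$-points in more detail, and your observation that $n^2$ could just as well be $n$ is accurate.
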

\begin{proof}
Write $f = t_c\circ \psi$ where $\psi$ is an isogeny and $t_c$ is translation by $c \in A$. Choose $c^\prime \in A(\Qbar)$ such that $[n^2] c^\prime = c$. Set $f^\prime = t_{c^\prime} \circ \psi$. We have 
\begin{equation*}
[n^2]\circ f^\prime = [n^2] \circ t_{c^\prime} \circ \psi = t_c\circ [n^2]\circ \psi = t_c \circ \psi \circ [n^2] = f \circ [n^2]   
\end{equation*}
as claimed. 
\end{proof}

\begin{proposition}\label{prop:abeliantorsion}
Let $A$ be an abelian variety defined over $\Qbar$. When $\L_0, \L_1, \ldots, \L_r$ are torsion line bundles on $A$, the Kawaguchi--Silverman conjecture is true for $\PP\bigl(\bigoplus_{i=0}^r \L_i\bigr)$.
\end{proposition}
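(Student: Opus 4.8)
The plan is to trivialize $\E$ after pulling back along a multiplication map, reducing the dynamics of $\PP(\E)$ to the already understood dynamics on a product $A \times \PP^r$, and then to transfer the conclusion back down. First I would reduce to the situation of a commutative square: by \cite[Lemma~6.2]{lesieutresatriano2021ksc} some iterate of $f$ fits into a diagram \eqref{eq:dagger} over a surjective endomorphism $g \colon A \to A$, and by \cite[Lemma~3.3]{sano2020products} the \hyperref[conj:KSC]{KS}~Conjecture for $f$ is equivalent to it for that iterate, so we may assume $f$ itself sits in a square \eqref{eq:dagger} over some surjective $g$.

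Next I would trivialize and lift. Let $N$ be a common multiple of the orders of $\L_0,\dots,\L_r$ in $\Pic^0(A)$; since $[N^2]^\ast$ acts on $\Pic^0(A)$ as multiplication by $N^2$, we get $[N^2]^\ast \L_i \cong \L_i^{\otimes N^2} \cong \O_A$, so $[N^2]^\ast \E \cong \O_A^{\oplus(r+1)}$ and $\PP([N^2]^\ast \E) \cong A \times \PP^r$ (we use $N^2$ rather than $N$ only to match the form of Lemma~\ref{lemma:nsquaredlemma}; this changes nothing since $\L_i^{\otimes N^2}$ is still trivial). Because $\PP(-)$ commutes with base change, $\PP([N^2]^\ast \E)$ is the fibre product $A \times_{[N^2],\,A} \PP(\E)$; write $\Phi \colon A \times \PP^r \to \PP(\E)$ for the projection, a finite surjective morphism lying over $[N^2]$, and $\pi' \colon A \times \PP^r \to A$ for the structure map. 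Applying Lemma~\ref{lemma:nsquaredlemma} with $n = N$ gives a surjective endomorphism $g' \colon A \to A$ with $g \circ [N^2] = [N^2] \circ g'$. The morphisms $g' \circ \pi'$ and $f \circ \Phi$ satisfy $[N^2] \circ g' \circ \pi' = g \circ \pi \circ \Phi = \pi \circ f \circ \Phi$, so the universal property of the fibre product produces a morphism $h \colon A \times \PP^r \to \PP([N^2]^\ast\E) = A\times\PP^r$ with $\Phi \circ h = f \circ \Phi$. Since $f\circ\Phi$ is dominant, hence generically finite, and $\Phi$ is finite while $A \times \PP^r$ is irreducible of dimension $\dim\PP(\E)$, a dimension count forces $h$ to be surjective.

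Finally I would descend. The \hyperref[conj:KSC]{KS}~Conjecture is known for the product $A \times \PP^r$. Given $P \in \PP(\E)(\Qbar)$ with Zariski dense $f$-orbit, choose $Q \in \Phi^{-1}(P)$; then $\Phi(h^n(Q)) = f^n(P)$ for all $n$, so $\Phi\bigl(\overline{O_h(Q)}\bigr) \supseteq \overline{O_f(P)} = \PP(\E)$, and again by finiteness of $\Phi$ and irreducibility of $A \times \PP^r$ the orbit $O_h(Q)$ is Zariski dense. Hence $\alpha_h(Q) = \lambda_1(h)$. As $\Phi$ is finite, $\Phi^\ast H$ is ample for any ample $H$ on $\PP(\E)$, so $h_H(f^n(P)) = h_{\Phi^\ast H}(h^n(Q)) + O(1)$ is comparable, up to bounded multiplicative and additive constants along the orbit, to an ample height on $A \times \PP^r$; taking $n$-th roots of the maxima with $1$ gives $\alpha_f(P) = \alpha_h(Q)$. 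Since dynamical degrees are invariant under the generically finite semiconjugacy $\Phi$ we have $\lambda_1(h) = \lambda_1(f)$, and therefore $\alpha_f(P) = \lambda_1(f)$, which is what the \hyperref[conj:KSC]{KS}~Conjecture asserts. (Proposition~\ref{prop:fibreProp} is not needed here, but it is what guarantees the square diagram is the natural object to work with.)

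The step I expect to be the main obstacle is constructing $h$ as a genuine surjective endomorphism: the fibre-product description of $\PP([N^2]^\ast\E)$ together with Lemma~\ref{lemma:nsquaredlemma} makes the existence of the lift essentially formal, but one must check that it is a morphism of schemes rather than merely a set-theoretic map, and that it is surjective; the latter cannot use a fibrewise argument over a fixed base (as $h$ covers $g'$, not the identity) and instead relies on irreducibility and a dimension count. The remaining ingredients — the height comparison, density of the lifted orbit, and the cited cases of the conjecture for $A \times \PP^r$ — are routine.
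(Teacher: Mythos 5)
Your proposal is correct and follows essentially the same route as the paper's proof: reduce to a commutative square by iterating, trivialize by base change along $[N^2]$, lift $g$ to $g'$ via Lemma~\ref{lemma:nsquaredlemma}, obtain the lift $h$ of $f$ to $A\times\PP^r$ from the universal property of the fibre product, invoke the known case of the conjecture on $A\times\PP^r$, and descend. The one place you depart from the paper is the final descent step, where the paper cites \cite[Lemma~3.2]{matsuzawasanoshibata2018surfaces} and you instead reprove that lemma inline (dense orbit lifts to a dense orbit because $\eta$ is finite, ample heights pull back to ample heights up to $O(1)$, and dynamical degrees are preserved by finite surjective semiconjugacy); you also supply a dimension-count justification for surjectivity of $h$ that the paper leaves implicit. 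These are sound and slightly more self-contained, but not a different strategy.
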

\begin{proof}
Let $N=\textnormal{lcm}(\textnormal{Ord}(\L_i))$. Consider the fibre product $\PP (\E)\times_{A} A$ given by the diagram
\begin{equation*}
\begin{tikzcd}
\PP(\E)\times_{A} A \arrow[d, "\eta" left] \arrow[r, "\pi^\prime"] & A \arrow[d, "{[N^2]}"] \\
\PP(\E) \arrow[r,"\pi"] & A
\end{tikzcd}
\end{equation*}
where $[N^2]$ denotes the multiplication by $N^2$ map on $A$. We obtain the isomorphism 
\begin{equation*}
    \PP (\E)\times_{A} A\cong \PP ([N^2]^* \E)\cong A\times \PP^r.
\end{equation*}
Suppose that we have a surjective morphism $f\colon \PP(\E)\rightarrow \PP(\E)$. After replacing $f$ with some iterate we may assume that we have a diagram
\begin{equation*}
\begin{tikzcd}
\PP(\E) \arrow[d, "\pi" left] \arrow[r, "f"] & \PP(\E) \arrow[d, "\pi"] \\
A \arrow[r,"g"] & A
\end{tikzcd}
\end{equation*}
with $g$ surjective. Applying Lemma~\ref{lemma:nsquaredlemma} to $g$ and $N$ we have a morphism $g^\prime\colon A\rightarrow A$ such that $[N^2]\circ g^\prime=g\circ [N^2]$. We have
\begin{align*}
[N^2]\circ g^\prime\circ \pi^\prime&=g\circ [N^2]\circ \pi^\prime\\&=g\circ \pi \circ \eta\\ &=\pi\circ f\circ\eta.
\end{align*}
Therefore we have a unique surjective morphism $\widehat{f}\colon \PP([N^2]^*\E)\rightarrow \PP([N^2]^*\E)$ making the following diagram commute
\begin{equation*}
\begin{tikzcd}
\PP({[N^2]^*\E}) \arrow[d, "\eta" left] \arrow[r, "\pi^\prime"] \arrow[rd, "\widehat{f}"] & A \arrow[rd, "g^\prime"] & ~ \\
\PP(\E) \arrow[rd, "f"] & \PP({[N^2]^*\E}) \arrow[d, "\eta" left] \arrow[r, "\pi^\prime"] & A \arrow[d, "{[N^2]}"] \\
~ & \PP(\E) \arrow[r,"\pi"] & A.
\end{tikzcd}
\end{equation*}
As $\PP([N^2]^*\E)\cong A\times \PP^r$ the Kawaguchi--Silverman conjecture is known for $\widehat{f}$ by \cite[Theorem~1.3]{sano2020products}. By \cite[Lemma~3.2]{matsuzawasanoshibata2018surfaces} the Kawaguchi--Silverman conjecture holds for $f$.
\end{proof}

We wish to verify the Kawaguchi--Silverman conjecture for $\PP(\bigoplus_{i=0}^r\L_i)$ where $\L_i$ are algebraically trivial line bundles on a non-abelian variety $X$. Since each $\L_i$ is the pull-back of a line bundle on the Albanese variety, we can use Proposition~\ref{prop:abeliantorsion}. We prove a more general statement about pullbacks of projective bundles.

\begin{proposition}\label{prop:pullbackReduction}
Let $\psi\colon X\rightarrow Y$ be a surjective endomorphism of projective varieties defined over $\overline{\QQ}$. Let $\E$ be vector bundle on $Y$. Suppose that we have a diagram
\begin{equation}\label{eq:bigpullback}
\begin{tikzcd}
\PP(\psi^* \E) \arrow [r, "f" above] \arrow[d, "\pi" left] & \PP(\psi^*\E) \arrow[d, "\pi" right] \\
X \arrow[r, "g" above] \arrow[d, "\psi" left] & X \arrow[d, "\psi" right] \\
Y \arrow[r, "h" above] & Y.
\end{tikzcd}
\end{equation}
Let $\eta\colon \PP(\psi^*\E)=\PP(\E)\times_YX\rightarrow \PP(\E), \pi \colon \PP(\psi^* \E) \rightarrow X, \pi' \colon \PP(\E) \rightarrow Y$ denote the canonical projections. Assume that there is a numerically trivial line bundle $\B$ on $Y$ such that $f^*\O_{\PP(\psi^*\E)}(1)\cong \O_{\PP(\psi^*\E)}(d)\otimes \pi^*\psi^*\B$ and
\[h^0(\pi^*g^*\psi^*\E^\vee\otimes \O_{\PP(\psi^*\E)}(d)\otimes \pi^*\psi^* \B)=h^0(\pi^{\prime *}h^*\E^\vee\otimes\O_{\PP (\E)}(d)\otimes \pi^{\prime *} \B).\]
There exists a surjective morphism $\widehat{f}\colon \PP(\E)\rightarrow \PP(\E)$ making the following diagram commute.
\begin{equation}
\label{eq:smallpullback}
\begin{tikzcd}
\PP(\psi^* \E) \arrow[r, "f" above] \arrow[d, "\eta" left] & \PP(\psi^* \E) \arrow[d, "\eta" right] \\
\PP(\E) \arrow[r, "\widehat{f}"] & \PP(\E).
\end{tikzcd}
\end{equation}
Moreover, if $\E$ is a nef vector bundle then $\lambda_1(f\vert_\pi)=\lambda_1(\widehat{f}\vert_{\pi^\prime})$.
\end{proposition}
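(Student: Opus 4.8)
The plan is to turn the morphism $f$ into the sheaf-theoretic datum of Proposition~\ref{prop:compatibilityconditions}, descend that datum along $\eta$ using the cohomology equality in the hypothesis, and then run the universal property of $\PP(\E)$ backwards to produce $\widehat{f}$. By the functorial description of morphisms of projective bundles, the square \eqref{eq:bigpullback} together with $f^*\O_{\PP(\psi^*\E)}(1)\cong\O_{\PP(\psi^*\E)}(d)\otimes\pi^*\psi^*\B$ is the same as a surjection $\theta\colon\pi^*g^*\psi^*\E\twoheadrightarrow\O_{\PP(\psi^*\E)}(d)\otimes\pi^*\psi^*\B$, i.e.\ a nowhere-vanishing section of $(\pi^*g^*\psi^*\E)^\vee\otimes\O_{\PP(\psi^*\E)}(d)\otimes\pi^*\psi^*\B$. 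The relation $\psi\circ g=h\circ\psi$ gives $g^*\psi^*\E\cong\psi^*h^*\E$, the fibre-product square gives $\pi^*\psi^*=\eta^*\pi'^*$ (since $\psi\circ\pi=\pi'\circ\eta$), and base change of the relative $\O(1)$ along $\eta$ gives $\eta^*\O_{\PP(\E)}(1)=\O_{\PP(\psi^*\E)}(1)$; since forming $\mathcal{H}om$ out of a locally free sheaf commutes with pullback, these identities combine to rewrite the sheaf above as $\eta^*\mathcal{N}$, where $\mathcal{N}\coloneqq\pi'^*h^*\E^\vee\otimes\O_{\PP(\E)}(d)\otimes\pi'^*\B$. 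Thus $\theta\in H^0(\PP(\psi^*\E),\eta^*\mathcal{N})$.

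Now I would note that the two cohomology groups in the statement are exactly $H^0(\PP(\E),\mathcal{N})$ and $H^0(\PP(\psi^*\E),\eta^*\mathcal{N})$. As $\eta$ is the base change of the surjection $\psi$ it is itself surjective, and with both varieties reduced and $\mathcal{N}$ locally free the pullback $\eta^*\colon H^0(\PP(\E),\mathcal{N})\to H^0(\PP(\psi^*\E),\eta^*\mathcal{N})$ is injective; the equality of dimensions in the hypothesis then forces it to be an isomorphism. Hence $\theta=\eta^*\widehat{\theta}$ for a unique map $\widehat{\theta}\colon\pi'^*h^*\E\to\O_{\PP(\E)}(d)\otimes\pi'^*\B$. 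Its cokernel satisfies $\eta^*\coker\widehat{\theta}=\coker(\eta^*\widehat{\theta})=\coker\theta=0$, and a nonzero coherent sheaf cannot pull back to zero along a surjection (Nakayama on the fibre over a point of its support), so $\widehat{\theta}$ is surjective. Feeding $\widehat{\theta}$ into the universal property of $\PP(\E)$ over the map $h\circ\pi'$ yields $\widehat{f}\colon\PP(\E)\to\PP(\E)$ with $\pi'\circ\widehat{f}=h\circ\pi'$ and $\widehat{f}^*\O_{\PP(\E)}(1)\cong\O_{\PP(\E)}(d)\otimes\pi'^*\B$. For the commutativity of \eqref{eq:smallpullback}, observe that $\eta\circ f$ and $\widehat{f}\circ\eta$ are both morphisms $\PP(\psi^*\E)\to\PP(\E)$ lying over $h\circ\psi\circ\pi$, so by the universal property each corresponds to a line-bundle quotient of $(h\psi\pi)^*\E=\pi^*g^*\psi^*\E$; unwinding the definitions and using $\eta^*\O_{\PP(\E)}(1)=\O_{\PP(\psi^*\E)}(1)$ shows both quotients equal $\theta$, so the two morphisms coincide. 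Finally $\widehat{f}\circ\eta=\eta\circ f$ is a composite of surjections, so $\widehat{f}$ is surjective.

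For the last assertion I would compute both relative dynamical degrees as the degree on the fibres, using the proposition following Proposition~\ref{prop:fibreProp} (which identifies these quantities for projective bundles). Restricting $f^*\O_{\PP(\psi^*\E)}(1)\cong\O_{\PP(\psi^*\E)}(d)\otimes\pi^*\psi^*\B$ to a fibre $F$ of $\pi$, on which $\O_{\PP(\psi^*\E)}(1)$ restricts to $\O_F(1)$ and $\pi^*\psi^*\B$ to the trivial bundle, shows $f$ has degree $d$ on the fibres, so $\lambda_1(f\vert_\pi)=d$; restricting $\widehat{f}^*\O_{\PP(\E)}(1)\cong\O_{\PP(\E)}(d)\otimes\pi'^*\B$ to a fibre of $\pi'$ gives $\lambda_1(\widehat{f}\vert_{\pi'})=d$ in the same way. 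Equivalently, since $\eta$ restricts to an isomorphism on each fibre and \eqref{eq:smallpullback} commutes, $f$ and $\widehat{f}$ restrict to the same self-map on corresponding fibres. The nef hypothesis on $\E$ is what lets one pin down these line-bundle classes in terms of $\O(1)$ and a pullback from the base (controlling the nef cones of $\PP(\E)$ and $\PP(\psi^*\E)$, as in the proof of Lemma~\ref{lemma:Bistrivial}); with that in hand both degrees equal $d$, hence agree.

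I expect the descent step to be the main obstacle: a priori $\PP(\psi^*\E)=\PP(\E)\times_YX$ carries strictly more self-morphisms than $\PP(\E)$ does, namely those mixing the extra $X$-direction of the fibre, so there is no formal reason for $f$ to arise from a morphism of $\PP(\E)$. The cohomology equality in the hypothesis is precisely the input that rules this out once $f$ has been translated into the global section $\theta$ of $\eta^*\mathcal{N}$; the remaining points — surjectivity of the descended map $\widehat{\theta}$ and the identification $\eta\circ f=\widehat{f}\circ\eta$ — are routine consequences of the universal property of $\PP(\E)$, modulo bookkeeping of the pullback identities $g^*\psi^*=\psi^*h^*$, $\pi^*\psi^*=\eta^*\pi'^*$ and $\eta^*\O_{\PP(\E)}(1)=\O_{\PP(\psi^*\E)}(1)$.
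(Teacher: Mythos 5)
Your proof is correct and follows essentially the same route as the paper: translate $f$ into the nowhere-vanishing section $\theta$ via the functorial description of $\PP(\E)$, rewrite its ambient sheaf as $\eta^*\mathcal{N}$ using $g^*\psi^*=\psi^*h^*$, $\pi^*\psi^*=\eta^*\pi'^*$, and $\eta^*\O_{\PP(\E)}(1)=\O_{\PP(\psi^*\E)}(1)$, then combine the surjectivity of $\eta$ with the dimension hypothesis to make $\eta^*$ an isomorphism on global sections, after which the universal property produces $\widehat{f}$. Your cokernel/Nakayama argument for surjectivity of $\widehat{\theta}$ and your explicit universal-property check for the commutativity of \eqref{eq:smallpullback} fill in details the paper's proof leaves implicit, but the overall strategy is the same.
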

\begin{proof}
The morphism $f$ corresponds to a surjection of sheaves \[\beta\colon \pi^*g^*\psi^*\E\twoheadrightarrow \O_{\PP(\psi^*\E)}(d)\otimes \pi^*\psi^* B.\] The morphism is a global section of \[\Hom(\pi^*g^*\psi^*\E,\O_{\PP(\psi^*\E)}(d)\otimes \pi^*\psi^* \B)\cong \pi^*g^*\psi^*\E^\vee\otimes \O_{\PP(\psi^*\E)}(d)\otimes \pi^*\psi^* \B. \]
We have $\pi^*g^*\psi^*=\pi^*\psi^*h^*=\eta^*\pi^{\prime *}h^*$ and $\O_{\PP(\psi^*\E)}(1)=\eta^*\O_{\PP \E}(1)$. It follows that $\beta$ is a global section of
\begin{align*}
\eta^*\pi^{\prime *}h^*\E^\vee\otimes \eta^*\O_{\PP (\E)}(d)\otimes 
\eta^*\pi^{\prime *} \B &= \eta^*(\pi^{\prime *}h^*\E^\vee\otimes\O_{\PP (\E)}(d)\otimes \pi^{\prime *} \B)\\&=\eta^*(\Hom(\pi^{\prime *}h^*\E,\O_{\PP (\E)}(d)\otimes \pi^{\prime *} \B).
\end{align*}
Since $\eta$ is the base change of a surjective map, it is surjective. Therefore $\eta^*$ is injective on global sections as $(\eta^*s)(x)=s(\eta(x))$. Furthermore, by the same argument $\eta^*$ preserves non-vanishing sections. By assumption, the dimensions of $\Hom(\pi^{\prime *}h^*\E,\O_{\PP E}(d)\otimes \pi^{\prime *} \B)$ and $\eta^*\Hom(\pi^{\prime *}h^*\E,\O_{\PP E}(d)\otimes \pi^{\prime *} \B)$ are equal, therefore $\eta^*$ is an isomorphism and gives an identification between the non-vanishing sections of $\Hom(\pi^{\prime *}h^*\E,\O_{\PP E}(d)\otimes \pi^{\prime *} \B)$ and $\eta^*\Hom(\pi^{\prime *}h^*\E,\O_{\PP E}(d)\otimes \pi^{\prime *} \B)$. Therefore, every morphism $f\colon \PP(\psi^*\E)\rightarrow \PP(\psi^* \E)$ satisfying our assumptions is induced by some morphism $\widehat{f}\colon \PP(\E)\rightarrow \PP(\E)$ as claimed. The statement of relative dynamical degrees follows from the observation that since 
\begin{equation*}
f^*\O_{\PP(\psi^*\E)}(1)=\O_{\PP(\psi^*\E)}(d)\otimes\pi^*\psi^*\B,
\end{equation*}
we have that $\lambda_1(f\vert_\pi)=d$ when $\E$ is nef, and consequently so is $\psi^*\E$. On the the other hand, we also have that $\widehat{f}^*\O_{\PP(\E)}(1)=\O_{\PP(\E)}(d)\otimes\pi^{\prime *}\B$.
\end{proof}

\begin{corollary}\label{cor:pullbackRed}
Using the notation and assumptions of Proposition~\ref{prop:pullbackReduction}, if $\E$ is nef and the Kawaguchi--Silverman conjecture is true for $\widehat{f}$ and $g$, then the Kawaguchi--Silverman conjecture is true for $f$.
\end{corollary}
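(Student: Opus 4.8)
The plan is to run the domination argument behind Corollary~\ref{cor:standard1}, but simultaneously along the two surjections $\eta$ and $\pi$. First I would invoke Proposition~\ref{prop:pullbackReduction} to obtain the surjective morphism $\widehat{f}\colon\PP(\E)\rightarrow\PP(\E)$ with $\eta\circ f=\widehat{f}\circ\eta$. I would then observe that $\widehat{f}$ lies over a surjective $h\colon Y\rightarrow Y$: from $\pi'\circ\eta=\psi\circ\pi$ and the commuting squares of \eqref{eq:bigpullback} one gets $\pi'\circ\widehat{f}\circ\eta=h\circ\pi'\circ\eta$, and since $\eta$ is an epimorphism this gives $\pi'\circ\widehat{f}=h\circ\pi'$; surjectivity of $h$ follows because $h\circ\psi=\psi\circ g$ is surjective. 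Likewise $f$ lies over $g\colon X\rightarrow X$. Because $\E$ is nef, Proposition~\ref{prop:pullbackReduction} also gives $\lambda_1(f\vert_\pi)=\lambda_1(\widehat{f}\vert_{\pi'})$; write $d$ for this common value.

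Next, let $P\in\PP(\psi^*\E)(\Qbar)$ have Zariski dense $f$-orbit. Since $\eta$ and $\pi$ are surjective morphisms intertwining $f$ with $\widehat{f}$ and with $g$, the images $\eta(P)$ and $\pi(P)$ have Zariski dense orbits under $\widehat{f}$ and under $g$ (the closure of the image of an orbit contains the image of its closure). Now the height comparison used in the proof of Corollary~\ref{cor:standard1} applies: the pullback $\eta^*H'$ of an ample $H'$ on $\PP(\E)$ is nef, so $cH-\eta^*H'$ is ample for some $c>0$ and any ample $H$ on $\PP(\psi^*\E)$, whence $h_{H'}(\widehat{f}^n(\eta(P)))=h_{\eta^*H'}(f^n(P))+O(1)\leq c\,h_H(f^n(P))+O(1)$; taking $n$-th roots and $\liminf$, and invoking the \hyperref[conj:KSC]{KS}~Conjecture for $\widehat{f}$, yields $\underline\alpha_f(P)\geq\lambda_1(\widehat{f})$ (the case $\lambda_1(\widehat{f})=1$ being trivial as arithmetic degrees are $\geq 1$). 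The same argument with $\pi,g$ in place of $\eta,\widehat{f}$ gives $\underline\alpha_f(P)\geq\lambda_1(g)$.

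Finally comes the bookkeeping of dynamical degrees. By the product formula \cite[Theorem~2.2.2]{lesieutresatriano2021ksc} applied to $\pi'$ and to $\pi$, $\lambda_1(\widehat{f})=\max\{\lambda_1(h),d\}\geq d$ and $\lambda_1(f)=\max\{\lambda_1(g),d\}$, so $\max\{\lambda_1(g),\lambda_1(\widehat{f})\}\geq\lambda_1(f)$. Combining the two lower bounds, $\underline\alpha_f(P)\geq\lambda_1(f)$, while $\overline\alpha_f(P)\leq\lambda_1(f)$ by \cite[Theorem~1.4]{matsuzawa2020bounds}; the squeeze forces the limit defining $\alpha_f(P)$ to exist and equal $\lambda_1(f)$, which is the \hyperref[conj:KSC]{KS}~Conjecture for $f$. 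The step I expect to need the most care is precisely this split: $\eta$ need not be finite (e.g.\ when $\psi$ is an Albanese map), so $\lambda_1(\widehat{f})$ may be strictly smaller than $\lambda_1(f)$, and feeding in the \hyperref[conj:KSC]{KS}~Conjecture for $g$ is genuinely necessary --- $\widehat{f}$ controls the fibre degree $d$, which can exceed $\lambda_1(g)$, whereas $g$ controls $\lambda_1(g)$, which $\widehat{f}$ alone need not detect. The orbit-density and height-comparison claims are routine.
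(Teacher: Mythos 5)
Your argument is correct and takes essentially the same route as the paper: construct $\widehat{f}$ via Proposition~\ref{prop:pullbackReduction}, compare dynamical degrees via the product formula applied to the squares over $\pi$ and $\pi'$ together with $\lambda_1(h)\leq\lambda_1(g)$, and then dominate heights along the surjections $\eta$ and $\pi$. The paper packages the same domination as a case split (on whether $\lambda_1(f)=\lambda_1(g)$, handled by Corollary~\ref{cor:standard1}, or $\lambda_1(f)=\lambda_1(f\vert_\pi)$, handled by showing $\lambda_1(\widehat{f})=\lambda_1(f)$ and citing \cite[Lemma~3.2]{matsuzawasanoshibata2018surfaces}) rather than running the two height bounds simultaneously, but the underlying argument is identical to yours.
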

\begin{proof}
Applying the product formula for dynamical degrees to the top square of the diagram \eqref{eq:bigpullback} gives that $\lambda_1(f)=\max\{\lambda_1(f\vert_\pi),\lambda_1(g)\}$. When $\lambda_1(g)=\lambda_1(f)$, the Kawaguchi--Silverman conjecture is true for $f$ by Corollary~\ref{cor:standard1}. Hence we assume that $\lambda_1(f)=\lambda_1(f\vert_\pi)$. Proposition~\ref{prop:pullbackReduction} yields the diagram 
\begin{equation*}
\begin{tikzcd}
\PP(\E) \arrow[r, "\widehat{f}" above] \arrow[d, "\pi^\prime" left] & \PP(\E) \arrow[d, "\pi^\prime" right] \\
Y \arrow[r, "h"] & Y.
\end{tikzcd}
\end{equation*}
Applying the product formula gives that\[\lambda_1(\widehat{f})=\max\{\lambda_1(h),\lambda_1(\widehat{f}\vert_{\pi^\prime})\}.\] The bottom square of diagram \eqref{eq:bigpullback} gives that $\lambda_1(h)\leq \lambda_1(g)$ by direct computation. Therefore \[\lambda_1(\widehat{f})=\lambda_1(\widehat{f}\vert_{\pi^\prime})=\lambda_1(f\vert_\pi)=\lambda_1(f).\]
Consider the square \eqref{eq:smallpullback}. By our assumptions $\eta$ is surjective and the Kawaguchi--Silverman conjecture is true for $\widehat{f}$. Therefore by \cite[Lemma~3.2]{matsuzawasanoshibata2018surfaces} the Kawaguchi--Silverman conjecture is true for $f$.  
\end{proof}
The prototypical application of Corollary~\ref{cor:pullbackRed} is when $Y$ is the Albanese variety and $\psi$ the Albanese morphism. 
\begin{corollary}
\label{cor:mainThmAtorsion}
Let $X$ be a smooth projective variety over $\Qbar$ such that its Mori cone is generated by finitely many numerical classes of curves and such that the Kawaguchi--Silverman conjecture is true for all surjective endomorphisms of $X$. Fix $\L_0, \L_1,\ldots, \L_r$ to be torsion line bundles on $X$ and set $\E = \bigoplus_{i=0}^r \L_i$. For any surjective endomorphism of $\PP (\E)$, the Kawaguchi--Silverman conjecture holds.
\end{corollary}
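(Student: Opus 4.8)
The plan is to reduce, via the Albanese morphism, to the abelian-base case settled in Proposition~\ref{prop:abeliantorsion}, using the pullback-comparison machinery of Proposition~\ref{prop:pullbackReduction} and Corollary~\ref{cor:pullbackRed}. First I would twist $\E$ by $\L_0^{-1}$ (harmless for $\PP(\E)$) to assume $\L_0=\O_X$, and write $\alpha_X\colon X\to A\coloneqq\operatorname{Alb}(X)$ for the Albanese map. Since each $\L_i$ is torsion, hence algebraically trivial, the isomorphism $\alpha_X^*\colon\Pic^0(A)\xrightarrow{\sim}\Pic^0(X)$ produces torsion bundles $\M_i\in\Pic^0(A)$ with $\L_i=\alpha_X^*\M_i$, so that $\E=\alpha_X^*\E_A$ with $\E_A\coloneqq\bigoplus_i\M_i$ a direct sum of torsion line bundles on $A$. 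Given a surjective endomorphism of $\PP(\E)$, I may replace it by an iterate (permissible by \cite[Lemma~3.3]{sano2020products}) so that, by \cite[Lemma~6.2]{lesieutresatriano2021ksc}, it fits in a square \eqref{eq:dagger} over a surjective $g\colon X\to X$; the universal property of the Albanese then yields a surjective $h\colon A\to A$ with $\alpha_X g=h\alpha_X$, giving the diagram \eqref{eq:bigpullback} with $\psi=\alpha_X$, $Y=A$, and the bundle on $Y$ taken to be $\E_A$.

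The heart of the argument is checking the hypotheses of Proposition~\ref{prop:pullbackReduction}. By Proposition~\ref{prop:fibreProp}, $f^*\O_{\PP(\E)}(1)\lin\O_{\PP(\E)}(d)\otimes\pi^*\B'$ for some integer $d\geq 1$ and $\B'\in\Pic(X)$. As the Mori cone of $X$ is finitely generated and $\E$ is nef, \cite[Corollary~3.4]{misra2022nefcones} gives $\operatorname{Nef}(\PP(\E))=\RR_{\geq 0}\O_{\PP(\E)}(1)+\pi^*\operatorname{Nef}(X)$; since $f^*$ is a linear automorphism of $N^1(\PP(\E))$ preserving this nef cone and the hyperplane $\pi^*N^1(X)$ (because $\pi f=g\pi$), it must fix the extremal ray $\RR_{\geq 0}\O_{\PP(\E)}(1)$, which forces $\B'$ to be numerically trivial. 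Moreover $f$ corresponds, via the projection formula as in Proposition~\ref{prop:compatibilityconditions}, to a nonzero section of $(g^*\E)^\vee\otimes\Sym^d\E\otimes\B'$; this is a direct sum of numerically trivial line bundles $\mathcal{N}\otimes\B'$ with each $\mathcal{N}$ torsion, because $\E$ and $g^*\E=\alpha_X^*h^*\E_A$ (and hence their symmetric powers) are sums of torsion line bundles. A numerically trivial line bundle with a nonzero section is trivial, so some $\mathcal{N}\otimes\B'=\O_X$ and therefore $\B'$ is torsion; in particular $\B'\in\Pic^0(X)=\alpha_X^*\Pic^0(A)$, so $\B'=\alpha_X^*\B$ for a torsion, hence numerically trivial, $\B$ on $A$, as Proposition~\ref{prop:pullbackReduction} requires. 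For the $h^0$-equality there, the projection formula turns both sides into $h^0(X,\alpha_X^*\mathcal{G})$ and $h^0(A,\mathcal{G})$ with $\mathcal{G}=(h^*\E_A)^\vee\otimes\Sym^d\E_A\otimes\B$ a sum of torsion line bundles on $A$; for each torsion summand $\mathcal{N}$ the bundle $\alpha_X^*\mathcal{N}$ is numerically trivial, so it has a nonzero section iff it is trivial iff $\mathcal{N}=\O_A$ (by injectivity of $\alpha_X^*$ on $\Pic^0$) iff $\mathcal{N}$ has a nonzero section, so the two sides agree summand by summand.

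With the hypotheses verified, Proposition~\ref{prop:pullbackReduction} produces a surjective $\widehat f\colon\PP(\E_A)\to\PP(\E_A)$ fitting into \eqref{eq:smallpullback}, and since $\E_A$ is nef, Corollary~\ref{cor:pullbackRed} reduces the Kawaguchi--Silverman conjecture for $f$ to the same conjecture for $\widehat f$ and for $g$: the former holds by Proposition~\ref{prop:abeliantorsion} (as $A$ is abelian and the $\M_i$ are torsion), and the latter is the standing hypothesis on $X$. Thus the conjecture holds for our iterate, hence for the original endomorphism. I expect the main obstacle to be showing that $\B'$ is genuinely torsion and not merely numerically trivial: this is exactly where the torsion assumption on the $\L_i$ (rather than only numerical triviality) is essential, and it is what makes both the descent $\B'=\alpha_X^*\B$ and the cohomological comparison go through, so that the delicate non-torsion analysis of Theorem~\ref{thm:mainThmAnontorsion} is not needed.
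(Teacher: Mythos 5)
Your proof follows the paper's route exactly: descend via the Albanese morphism to a direct sum of torsion bundles on $\operatorname{Alb}(X)$, verify the hypotheses of Proposition~\ref{prop:pullbackReduction} (notably the $h^0$-equality, checked summand-by-summand), and conclude with Corollary~\ref{cor:pullbackRed} together with Proposition~\ref{prop:abeliantorsion}. In fact you supply more detail than the published proof on why the twist $\B'$ is numerically trivial (via the nef-cone description, which is where the finite generation of the Mori cone actually enters) and why it descends to $\operatorname{Alb}(X)$. The one imprecise step is ``$\B'$ is torsion; in particular $\B'\in\Pic^0(X)$'': torsion does not imply algebraically trivial in general, since $\operatorname{NS}(X)$ may have torsion; the conclusion nonetheless holds because you have already shown $\B'$ is a tensor product of the $\L_i$ and $g^*\L_j$, which lie in $\Pic^0(X)=\alpha_X^*\Pic^0(\operatorname{Alb}(X))$ by the same (implicit) identification of torsion with algebraically trivial bundles that the paper itself relies on when writing $\L_i=a^*\W_i$.
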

\begin{proof}
Let $a\colon X\rightarrow \textnormal{Alb}(X)$ be the Albanese morphism. We have that $\L_i=a^*\W_i$ for some algebraically trivial line bundle on $\textnormal{Alb}(X)$. By Proposition~\ref{prop:abeliantorsion} we have that the Kawaguchi--Silverman conjecture is true for $\PP(\bigoplus_{i=0}^r\W_i)$. Let $f \colon \PP(\bigoplus_{i=0}^r\L_i)\rightarrow \PP(\bigoplus_{i=0}^r\L_i)$ be a surjective endomorphism. After iterating $f$ we may assume we have a diagram as in \eqref{eq:bigpullback} with $Y=\textnormal{Alb}(X)$ and $\psi=a\colon X\rightarrow \textnormal{Alb}(X)$. Write $\E=\bigoplus_{i=0}^r a^* \W_i$. By Proposition~\ref{prop:fibreProp} we have that $f^*\O_{\PP(a^*\E)}(1)\equiv_{\textnormal{num}} \O_{\PP(a^*\E)}(d)$ for some $d$. Therefore we obtain that the pullback is $f^*\O_{\PP(a^*\E)}(1)=\O_{\PP(a^*\E)}(d)\otimes a^*\B$ for some algebraically trivial line bundle $\B$ on $\textnormal{Alb}(X)$. We must now verify the second condition in the assumption of Proposition~\ref{prop:pullbackReduction}. We have that
\begin{align*}
H^0(\PP(a^*\E),\pi^*g^*\psi^*\E^\vee\otimes \O_{\PP(\psi^* E)}(d)
\otimes \pi^*a^*\B)&=H^0(X,g^*\psi^*\E^\vee\otimes\sym^d(a^*\E)\otimes a^*\B)\\ &=H^0(X,a^*(h^*\E^\vee\otimes \sym^d(\E)\otimes \B)).
\end{align*}
Now we have that 
\begin{align*}
&H^0(\textnormal{Alb}(X),h^*\E^\vee\otimes \sym^d(\E)\otimes \B)\\
&=\bigoplus_{i=0}^r\bigoplus_{d_0+\cdots d_r=d}H^0(\operatorname{Alb}(X),h^*\W_i^{-1}\otimes \B\otimes \W_0^{\otimes d_0}\otimes\cdots \otimes \W_r^{\otimes d^r}).
\end{align*}
As $\B$ and each of the $\W_i$ are algebraically trivial we have that they are either trivial, or have no nonzero global sections. Therefore $h^0(\operatorname{Alb}(X),h^*\E^\vee\otimes \sym^d(\E)\otimes \B)$ is the number of pairs $(i,d_0,\ldots ,d_r)$ such that $h^*\W_i^{-1}\otimes \B\otimes \W_0^{\otimes d_0}\otimes\cdots \otimes \W_r^{\otimes d^r}=\O_{\operatorname{Alb}(X)}$. On the other hand, 
\begin{align*}
&H^0(X,a^*(h^*\E^\vee\otimes \sym^d(\E)\otimes \B))\\
&=\bigoplus_{i=0}^r\bigoplus_{d_0+\cdots d_r=d}H^0(X,a^*(h^*\W_i^{-1}\otimes \B\otimes \W_0^{\otimes d_0}\otimes\cdots \otimes \W_r^{\otimes d^r})).
\end{align*}
Since $a$ is surjective, it is injective on global sections and we see that 
\begin{align*}
\phantom{\iff} &H^0(X,a^*(h^*\W_i^{-1}\otimes \B\otimes \W_0^{\otimes d_0}\otimes\cdots \otimes \W_r^{\otimes d^r}))\neq 0\\ \iff &H^0(\operatorname{Alb}(X),h^*\W_i^{-1}\otimes \B\otimes \W_0^{\otimes d_0}\otimes\cdots \otimes \W_r^{\otimes d^r})\neq 0\\ \iff &h^*\W_i^{-1}\otimes \B\otimes \W_0^{\otimes d_0}\otimes\cdots \otimes \W_r^{\otimes d^r}=\O_{\operatorname{Alb}}(X)\end{align*} 
It follows that the dimensions are the same so that we may apply Proposition~\ref{prop:pullbackReduction}. Applying Corollary~\ref{cor:pullbackRed} gives the claim. 
\end{proof}

With these two results in hand, we are able to fully resolve the \hyperref[conj:KSC]{KS}~Conjecture for the projectivization of any direct sum of line bundles on an elliptic curve.

\begin{proof}[Proof of Corollary~\ref{intro:cor2}]
When $\E$ is a direct sum of torsion line bundles, this follows from Corollary~\ref{cor:mainThmAtorsion}. Otherwise, it follows from Theorem~\ref{thm:mainThmAnontorsion} and Corollary~\ref{cor:standard1}.
\end{proof}

\section{Examples of Endomorphisms}
\label{sec:examples}
When $\E$ does not split as a direct sum of line bundles, Theorem~\ref{thm:nonsplitnoendos} shows that there are no non-trivial endomorphisms of $\PP(\E)$. However, if $\E = \bigoplus_{i=1}^n \L_i$ for some degree zero line bundles $\L_i$, then we can explicitly write down some non-trivial endomorphisms. Let us restrict to the case of $X = C$ being an elliptic curve.

\begin{example}
Suppose $\E = \O_C \oplus \L_1 \oplus \L_2$ where $\L_1$ and $\L_2$ are both torsion of degree $k$. Let $g$ be the endomorphism on $C$ given by multiplication by $k$ and $\{U,V\}$ an open cover of $C$ trivializing $\E$. We can set $\B = \O_C$, and
\begin{align*}
    F_0 &= t_0^k, & F_1 &= t_1^k, & F_2 &= t_2^k, \\
    G_0 &= t_0^k, & G_1 &= t_1^k, & G_2 &= t_2^k.
\end{align*}
We check that these satisfy the conditions of Lemma~\ref{lemma:explicitcompconds}. Indeed, since $\L_1$ and $\L_2$ are torsion of order $k$, we have that $g^* \L_1 = g^* \L_2 = \O_C$. Hence the compatibility conditions are just that $\bigl(\Sym^d M\bigr)(F_i(t_0,t_1,t_2)) = F_i(t_0, \alpha_1 t_1, \alpha_2 t_2) = G_i$, where $\alpha_1$ and $\alpha_2$ are transition functions for $\L_1$ and $\L_2$ respectively. Since these bundles are both torsion of order $k$, we could choose $\alpha_1$ and $\alpha_2$ such that $\alpha_1^k = \alpha_2^k = 1$. It follows that the compatibility conditions are satisfied, and so this choice of $F_i$ and $G_i$ defines a surjective endomorphism of $\PP(\E)$ with degree $k$ on the fibres. In particular, this is a non-trivial endomorphism.
\end{example}

When the characteristic of the field is nonzero, we can even find non-trivial surjective endomorphisms of Atiyah bundles.

\begin{example}
Suppose we are working over $\FF_5$. Set $\lambda = 2$, so that the curve $C$ is given by equation
\begin{equation*}
    Z Y^2 = X(X - Z)(X - 2Z).
\end{equation*}
For convenience, we trivialize to the open set $U$ and write $x = \frac{X}{Z}, y = \frac{Y}{Z}$ so that $C$ is defined by $f \coloneqq y^2 - x(x - 1)(x - 2) = 0$. Set $\E = \F_2$ and
\begin{align*}
    F_0 &= -2t_0^5, & F_1 &= t_1^5 + (y - xy) t_0^5, \\
    G_0 &= -2t_0^5, & G_1 &= t_1^5 + (x^2 y^{-5} + 2x^2 y^{-3} - xy^{-1} + y^{-1}) t_0^5.
\end{align*}
Note that $y - xy \in \O_C(U) = \FF_5[x,y]/\langle f \rangle$ and $x^2 y^{-5} + 2x^2 y^{-3} - xy^{-1} + y^{-1} \in \O_C(V) = \FF_5[xy^{-1}, y^{-1}]/\langle y^{-3} f \rangle$. One can explicitly compute a reduction of the element $\omega^5 = x^{10} y^{-5}$ via $f$ to see that
\begin{equation*}
\omega^5 \equiv -y + xy + x^2 y^{-5} + 2x^2 y^{-3} - xy^{-1} + y^{-1} - 2\omega.
\end{equation*}
The first compatibility condition in Lemma~\ref{lemma:explicitcompconds} requires $\bigl( \Sym^d M \bigr) \bigl( F_0(t_0, t_1) \bigr) = F_0(t_0, t_1 + \omega t_0) = G_0$, and the second requires that
\begin{align*}
    \bigl(\Sym^d M\bigr)(F_1) &= (t_1 + \omega t_0)^5 + (y - xy) t_0^5 \\
    &= t_1^5 + (-y + xy + x^2 y^{-5} + 2x^2 y^{-3} - xy^{-1} + y^{-1} - 2\omega) t_0^5 + (y - xy) 0_1^5 \\
    &= t_1^5 + (x^2 y^{-5} + 2x^2 y^{-3} - xy^{-1} + y^{-1} - 2\omega) t_0^5 \\
    &= G_0 - 2\omega t_0^5 \\
    &= G_0 + \omega G_1
\end{align*}
as desired. Hence this defines a surjective endomorphism of $\PP(\E)$. In particular, this is a non-trivial endomorphism with degree 5 on the fibres. In fact, we could also construct an endomorphism of degree being any multiple of $5$. Similarly, this construction would work over any prime characteristic which is at least 5.
\end{example}

\section*{Acknowledgements}

The authors thank Yohsuke Matsuzawa, Matthew Satriano, Gregory G. Smith for helpful feedback, and Mike Roth for insightful comments, especially regarding the arguments in Section~\ref{sec:nonsplit}. We are very grateful to anonymous reviewers for their suggestions improving the quality of exposition. The second author was partially funded by the Natural Sciences and Engineering Research Council (NSERC).

\bibliographystyle{plain}
\bibliography{refs}

\Addresses

\end{document}